\documentclass[11pt]{article}
\usepackage{eurosym}
\usepackage{amsfonts}
\usepackage{amsmath}
\usepackage{geometry}
\usepackage{amssymb}
\usepackage{mathabx}
\usepackage{graphicx}
\usepackage{color}
\usepackage{dsfont}
\usepackage{bbm}
\usepackage[dvipsnames]{xcolor}
\usepackage{mathrsfs}
\usepackage[author={Oana}]{pdfcomment}
\usepackage[colorinlistoftodos]{todonotes}
\usepackage{titling}
\usepackage{lipsum}
\usepackage{bbm}
\usepackage{soul}
\usepackage{amssymb}
\usepackage{amsthm}
\usepackage{calc}
\usepackage{accents}
\usepackage{stmaryrd}
\usepackage{physics}
\usepackage{bm}

\newtheorem{theorem}{Theorem}

\newtheorem{assumptions}[theorem]{Assumptions}

\newtheorem{corollary}[theorem]{Corollary}

\newtheorem{definition}[theorem]{Definition}

\newtheorem{lemma}[theorem]{Lemma}

\newtheorem{proposition}[theorem]{Proposition}
\newtheorem{remark}[theorem]{Remark}

\def\cB{\mathcal{B}}

\def\cL{\mathcal{L}}
\def\T{\mathbb{T}}
\def\R{\mathbb{R}}
\def\cP{\mathcal{P}}
\def\txtd{{\textnormal{d}}}

\begin{document}

\date{}
\title{\textbf{Well-posedness and Hurst parameter estimation for fluid equations driven by fractional transport noise}}
\author{Alexandra Blessing Neam\c tu \thanks{Department of Mathematics and Statistics, University of Konstanz, Germany, e-mail: alexandra.blessing@uni-konstanz.de}  \qquad Dan Crisan\thanks{Department of Mathematics, Imperial College London, United Kingdom, e-mail: d.crisan@imperial.ac.uk} \qquad Oana Lang\thanks{Department of Mathematics, Babe\c{s}-Bolyai University, Cluj-Napoca,
Romania, e-mail: oana.lang@ubbcluj.ro}}

\maketitle

\begin{abstract}
We study a two-dimensional incompressible vorticity equation on the torus driven by transport-type fractional Brownian noise with Hurst parameter $H \in (1/2,1)$. 
The model captures persistent, long-range correlated forcing consistent with inertial-range scaling laws and fractional Brownian approximations of turbulent fluctuations.
A central ingredient of our approach is a version of the sewing lemma adapted to a class of integrands that includes, but is not limited to, transport-type structures. This result provides a flexible tool for constructing the Young integral and serves as a basis for analysing a wider class of stochastic partial differential equations. Using this approach, we establish existence and uniqueness of solutions via a fixed point argument and investigate statistical properties of the flow. In particular, we study quadratic functionals of the solution and derive an estimator for the Hurst parameter $H$.
\end{abstract}

\tableofcontents

\section{Introduction}
Our work is motivated by classical and modern statistical theories of two-dimensional turbulence, in particular the dual-cascade framework initiated by Kraichnan \cite{kraichnan}, which predicts self-similar inertial-range behaviour and inverse energy transfer in the presence of conserved vorticity and energy. In such a setting, velocity and vorticity fluctuations are expected to exhibit power-law scaling. Under Taylor's frozen turbulence hypothesis (\cite{taylor}) explained in detail below, this scaling is often heuristically associated with fractional Brownian motion with Hurst parameter $H \approx 1/3$. While this regime lies beyond the scope of the present work, it serves as a guiding heuristic. Here, we take a first step by focusing instead on the more regular case $H \in (1/2,1)$, where the analysis is more tractable.

Consider a fluid flow which evolves with velocity $u$ and let $\omega = \curl u$ be the corresponding vorticity. Assume that $u(x,t,w), \omega(x,t,w)$, for $x \in \mathbb{T}^2$ and $t \geq 0$, are defined on a probability space $(\Omega, \mathcal{F}, \mathbb{P})$.
We study the two-dimensional incompressible vorticity equation perturbed by transport-type fractional Brownian noise
\begin{equation}\label{eq:mainStrat}
d\omega_t+u_{t}\cdot \nabla \omega_tdt+\displaystyle
\mathcal{L}_{\xi}\omega_t dW_{t}^{H}=\Delta\omega_tdt
\end{equation}
with initial condition $\omega _{0}$. We have $\nabla \cdot u = 0$, $\omega_t=curl\ u_t$, while $\xi$ is a time-independent divergence-free vector field (so $\nabla \cdot \xi = 0$), and the operator $\mathcal{L}$ is given by $\mathcal{L}_{\xi}\omega_t := \xi \cdot \nabla \omega_t$. Here $W^H$ is a fractional Brownian motion with Hurst parameter $H >1/2$.

By considering a two-dimensional vorticity equation driven by fractional Brownian noise with $H>1/2$, our approach provides a mathematically tractable stochastic representation of persistent, long-range-correlated forcing consistent with the phenomenological descriptions developed in \cite{kolmogorov}, \cite{kraichnan}, \cite{taylor}. The analysis of existence, uniqueness, and statistical estimation of the Hurst parameter developed here establishes a rigorous foundation for linking stochastic partial differential equation models with classical turbulence theory and stochastic parametrizations, while offering a framework for quantifying memory effects and scale-invariant variability in two-dimensional turbulent flows.

In Kolmogorov's view (\emph{K41 theory}), three-dimensional turbulence can be explained as a predominantly one-way cascade of energy across scales: energy enters at large scales, it is then transferred to smaller scales, and eventually, at very small scales, energy is transformed into heat, under the effect of viscosity. Moreover, Kolmogorov derives an average rate $\epsilon$ at which this dissipation occurs. The small-scale statistics in locally isotropic fluids depends on two factors: the average dissipation rate $\epsilon$ and the viscosity $\nu$. At intermediate scales (\emph{inertial range}), fluid statistics depends uniquely on $\epsilon$, energy being transferred without loss. Based on these two hypotheses of similarity, the \emph{$r^{2/3}$ law is derived}. This is one of the most famous results in turbulence theory and it says that the second-order velocity structure function scales as: 
\begin{equation}
\mathbb{E}\left[|u(x+r) - u(x)|^2\right] \sim \epsilon^{2/3}r^{2/3}
\end{equation}
where $\mathbb{E}$ is the expected value with respect to the probability measure $\mathbb{P}$.
In \cite{kraichnan}, Kraichnan investigated the structure of inertial ranges in two-dimensional turbulence, highlighting the fundamental role played by the simultaneous conservation of kinetic energy and mean-square vorticity in the inviscid limit. Unlike the three-dimensional case, where energy predominantly cascades toward small scales, Kraichnan showed that the two-dimensional case is characterised by a two-way energy transfer: an inverse cascade of energy toward large scales represented through an energy spectrum $E(k)\sim k^{-5/3}$, and a direct cascade of enstrophy toward small scales associated with a $k^{-3}$ spectrum. Using a detailed analysis of triadic interactions in Fourier space and similarity arguments, he established that each inertial range transports only one invariant, while the other remains asymptotically conserved. Kraichnan's theory forms the basis for a so-called \emph{phenomenological correspondence} between structure functions and energy spectra, extending Kolmogorov’s ideas to the two-dimensional setting.

Overall, the approach introduced in \cite{kolmogorov} and \cite{kraichnan} establishes a \emph{phenomenological correspondence} (see e.g. \cite{schochtel}) between the second-order structure function
\[
\mathbb{E}\left[|u(x+r,t)-u(x,t)|^2\right])=C|r|^{\alpha-1}
\]
and the energy spectrum $E(\cdot)$ of the velocity field $u$,
\[
E(k)=\widetilde{C}\,k^{-\alpha}
\]
where $C,\widetilde{C}>0$ are constants, $1<\alpha<3$, $r \in \mathbb{R}^2$, and $k>0$ in the inertial subrange.
In particular, when $\alpha = 5/3$, one can recover the \emph{Kolmogorov two-thirds law} and the \emph{Kolmogorov five-thirds law} (also known as the Kolmogorov energy spectrum), respectively.

While stochastic triad models provide a reduced-order dynamical framework that preserves key structural features such as helicity or energy conservation, they necessarily operate at the level of finitely many interacting Fourier modes. As such, they capture localized mechanisms of nonlinear energy exchange but do not fully address the influence of temporally correlated, multi-scale forcing on the full vorticity field. 

In \cite{ourtriadpaper} the authors have investigated triad interactions driven by transport type noise (SALT noise actually). In traditional turbulence theory, interactions among three Fourier modes (“triads”) are the building blocks of energy transfer across scales. In \cite{ourtriadpaper} we have looked at stochastic parametrisations (Stochastic Advection by Lie Transport/SALT and Location Uncertainty/LU) projected onto such triads, investigating helicity-preserving or energy-preserving triad models. These models retain fundamental nonlinear interactions, but add stochastic forcing representing unresolved scales or uncertainty.

Extending this perspective to the two-dimensional vorticity equation driven by fractional Brownian motion with Hurst parameter $H>1/2$ is natural. Fractional Brownian forcing introduces long-range temporal dependence and persistent correlations, features that are increasingly recognized as relevant in geophysical and large-scale turbulent flows. In contrast to white-in-time stochastic perturbations, the regularity regime $H>1/2$ permits pathwise analytical treatment while modeling memory effects that cannot be captured within classical Markovian frameworks. Thus, moving from stochastic triad interactions to a full SPDE driven by fractional Brownian noise bridges reduced stochastic parametrizations and infinite-dimensional turbulence models, providing a mathematically rigorous setting in which nonlinear cascade mechanisms and correlated stochastic forcing coexist. This step brings the modeling framework closer to the statistical and scaling structures observed in two-dimensional turbulence, while maintaining analytical tractability necessary for establishing existence, uniqueness, and parameter inference results.

A connection between the classical theory of turbulence and fractional Brownian motion has been established in the literature through the \emph{Taylor’s frozen turbulence hypothesis}, see e.g. \cite{schochtel}, \cite{taylor}.

Taylor’s frozen turbulence hypothesis establishes a correspondence between temporal fluctuations observed at fixed spatial locations and the underlying spatial structure of turbulent flows under strong mean advection, enabling the interpretation of time series data in terms of inertial-range energy spectra. Building on the theory developed in \cite{kraichnan}, Taylor \cite{taylor} shows that spatial cascade dynamics and scaling laws can be translated into temporally correlated stochastic behavior, providing a natural justification for fractional Brownian motion approximations of turbulent forcing. 

Taylor’s hypothesis states that, for any scalar-valued fluid-mechanical quantity $\xi$ (for example, $u_i$, $i=1,2$), we have (see \cite{schochtel})
\[
\frac{\partial \xi}{\partial t}
=
-|\bar{U}|\,\frac{\partial \xi}{\partial \bar{x}}.
\]

The frozen turbulence hypothesis allows one to express the statistical properties of spatial increments $u(x+r,t)-u(x,t)$ in terms of temporal increments $u(x,t)-u(x,t+s)$ at a fixed time $t$. The above relation implies that
\[
u_i(x,t+s)=u_i(x-\bar{U}s,t),
\]
and therefore, using the relation for spatial increments, we obtain
\[
\mathbb{E}\bigl[|u(x,t)-u(x,t+s)|^2\bigr]
=
C|\bar{U}|^{\alpha-1}s^{\alpha-1}
\]
in the inertial subrange along the time axis.

Comparing this scaling with the increment structure of fractional Brownian motion,
\[
\mathbb{E}\bigl[|W_{t+s}^H - W_t^H|^2\bigr] = s^{2H}
\]
we identify the relation
\[
2H = \alpha - 1.
\]
In particular, the Kolmogorov-type scaling $\alpha = \tfrac{5}{3}$ corresponds formally to $H = \tfrac{1}{3}$. This correspondence suggests that it is natural to model the random velocity field $u$ using noise driven by fractional Brownian motion $(W_t^H)_{t\in\mathbb{R}}$ with Hurst parameter $H \in (0,1)$.
To duplicate the order of the time increments obtained by the Taylor's frozen turbulence hypothesis, $H$ must be chosen to be equal to $(\alpha-1)/2$.


Inspired by these considerations, we provide an SPDE-formulation to these phenomenological ideas, in which long-range temporal correlations and scale-invariant variability are incorporated through fractional Brownian noise, and their impact on the dynamics of the vorticity equation is analyzed in terms of existence, uniqueness, and statistical parameter estimation.~For other fluid dynamical models driven by fractional Brownian motion, we refer to~\cite{antonio,flandoli3,flandoli1,flandoli2,XM}.

Our approach is applicable to general class of equations of the form 

\begin{equation}
d\omega _{t}+\left( \mathcal{D+E}\right) \omega _{t}dt+\mathcal{F}\omega
_{t}dW_{t}^{H}=0,  \label{general:intro}
\end{equation}%
where $\mathcal{D}$ is a nonlinear operator, $\mathcal{F}$ is a (possibly) nonlinear operator, $\mathcal{E}$ is an (unbounded) linear operator, see section \ref{sect:general} for details and assumptions. For our arguments, it is essential to assume that $H>1/2$. This is due to the fact that the stochastic convolution improves the spatial regularity by a parameter which is strictly less than the H\"older regularity of the fractional Brownian motion, see Corollary \ref{i:reg}. Since $H>1/2$, this allows us to incorporate transport-type noise. In the Young regime, locally monotone stochastic partial differential equations with linear or Lipschitz continuous nonlinear multiplicative noise were treated in \cite{YR}. For $H\in(1/3,1/2]$ we refer to \cite{BG,deya, martina,martina2,antoine,James,XM} that consider rough partial differential equations in the framework of unbounded rough drivers.
We choose to work with the mild formulation of \eqref{eq:mainStrat} which enables us to incorporate possibly nonlinear diffusion coefficients (see \eqref{general:intro}) and to exploit optimal regularity results of the solution. Moreover, the equivalence between mild and weak solution is used for the estimation of the Hurst parameter.~Therefore, our approach provides Hurst parameter estimations for stochastic partial differential equations with transport-type noise as well as nonlinear multiplicative noise.
 
\newpage
\noindent\emph{Outline of the paper}
\vspace{3mm}

 In Section \ref{sect:preliminaries} we introduce the main notations and preliminaries.  In Section \ref{sect:sewinglemma} we develop a version of the sewing lemma adapted to our setting, where the integrands in the Young integral are quite general and, in particular, include transport-type structures but are not restricted to them. This level of generality is a key ingredient in allowing us to analyse more general classes of SPDEs later in the paper. The presentation is self-contained and does not rely on auxiliary results. In Section \ref{sect:vorticityeqn} we study the stochastic vorticity equation and establish existence and uniqueness of solutions using a fixed point argument. We also analyse the properties of the nonlinear term and the fractional noise term. In Section \ref{sect:hurst} we investigate statistical properties of the solution and develop a methodology for estimating the Hurst parameter based on quadratic variations. In Section \ref{sect:genproofs} we extend the fixed point argument to a more general class of stochastic evolution equations. Finally, in Section \ref{sect:applications} we present applications of our framework to several fluid models.
Appendix \ref{a:young} contains an alternative proof based on rough paths techniques, included for completeness.

\vspace{4mm}
\noindent\emph{Contributions of the paper}
\vspace{3mm}

The present work develops an analytical framework for transport-type stochastic perturbations driven by fractional Brownian motion with Hurst parameter $H>1/2$. We establish well-posedness for the two-dimensional stochastic vorticity equation with fractional noise by combining a fixed point argument with the analysis of the nonlinear transport structure. A key technical ingredient is the introduction of a version of the sewing lemma adapted to gradient-type integrands, which allows for a rigorous construction of the stochastic integral appearing in the equation. We further analyse quadratic functionals of the solution and develop a methodology for estimating the Hurst parameter from the dynamics, thereby linking statistical properties of the flow to the roughness
of the driving noise. Our results connect stochastic fluid models with turbulence-inspired scaling laws, based on the relation between Kolmogorov-type scaling and fractional noise models.

\section{Preliminaries and notations}\label{sect:preliminaries}
We consider a scale of Banach spaces $(\cB_\alpha)_{\alpha\in\R}$ where $\alpha$ indicates the spatial regularity.~In section~\ref{sect:sewinglemma} we work with the fractional power spaces corresponding to the Laplace operator, namely $\cB_\alpha=D(\Delta^\alpha)$.~In particular, 
we will work with the following functional spaces:
\begin{itemize}
\item $\mathcal{B}_{\alpha }=W^{2\alpha ,2}(\mathbb{T}^{2})=H^{2\alpha}(\mathbb{T}^{2})$ with the norm $%
\left\vert \left\vert x\right\vert \right\vert _{\alpha }:=\left\vert
\left\vert x\right\vert \right\vert _{H^{2\alpha}(\mathbb{T}^{2})}$, where $W^{2\alpha,2}(\mathbb{T}^2)=H^{2\alpha}(\mathbb{T}^2)$ is the standard Sobolev space on the two-dimensional torus $\mathbb{T}^2$. We denote the dual of $\mathcal{B}_{\alpha}$ by $\cB^*_{\alpha}$.
We consider $\mathcal{B}_{\infty}:=\bigcap_{\alpha \geq 0} W^{2 \alpha,2}\left(\mathbb{T}^2\right)=C^{\infty}\left(\mathbb{T}^2\right)$. 
We also use $\cB_{\alpha-1/2}=H^{2\alpha-1}(\T^2)$.

\item $C([0,T],\mathcal{B}_{\alpha })$ endowed with the norm $\left\vert
\left\vert \cdot \right\vert \right\vert _{0,\alpha }$. That is, for $y\in
C([0,T],\mathcal{B}_{\alpha })$, we have that 
\begin{equation*}
\left\vert \left\vert y\right\vert \right\vert _{0,\alpha }=\sup_{t\in \left[
0,T\right] }\left\vert \left\vert y_{s}\right\vert \right\vert _{\alpha }.
\end{equation*}

\item $C^{\gamma }([0,T],\mathcal{B}_{\beta })$ endowed with the norm $%
\left\vert \left\vert \cdot \right\vert \right\vert _{\gamma ,\beta }$. That
is, for $y\in C^{\gamma }([0,T],\mathcal{B}_{\beta })$, we have that 
\begin{equation*}
\left\vert \left\vert y\right\vert \right\vert _{\gamma ,\beta
}=\sup_{s,t\in \left[ 0,T\right] }\frac{\left\vert \left\vert
y_{t}-y_{s}\right\vert \right\vert _{\beta }}{\left\vert t-s\right\vert
^{\gamma }}=\sup_{s,t\in \left[ 0,T\right] }\frac{\left\vert \left\vert
y_{ts}\right\vert \right\vert _{\beta }}{\left\vert t-s\right\vert ^{\gamma }%
}
\end{equation*}
using the notation $y_{st}=y_{t}-y_{s}$. It follows that 
\begin{equation*}
\left\vert \left\vert y_{t}-y_{s}\right\vert \right\vert _{\beta }\leq
\left\vert \left\vert y\right\vert \right\vert _{\gamma ,\beta }\left\vert
t-s\right\vert ^{\gamma }.
\end{equation*}

\item We introduce the space  $V^T:=C([0,T],\mathcal{B}_{\alpha })\cap C^{\gamma }([0,T],%
\mathcal{B}_{\alpha-\gamma })$ endowed with the norm%
\begin{equation*}
y\in V^T,~~~\left\vert \left\vert y\right\vert \right\vert _{V^T}=\max \left\{
\left\vert \left\vert y\right\vert \right\vert _{0,\alpha },\left\vert
\left\vert y\right\vert \right\vert _{\gamma ,\alpha-\gamma }\right\} .
\end{equation*}
\item We use the notation $A\lesssim B$ if there exists a constant $C>0$ such that $A\leq C B$. 
\end{itemize}
{\bf Smoothing properties of analytic semigroups}

We denote by $(S_t)_{t\geq 0}$ the analytic semigroup generated by the Laplace operator. It is well-known that we can view the semigroup $(S_t)_{t\geq 0}$ as a linear mapping between the spaces $\cB_\alpha$. We consider $\sigma\in[0,1]$ and $S:[0,T]\to \cL(\cB_{\alpha},\cB_{ \alpha+\sigma})$. Then we can obtain the following standard bounds for the corresponding operator norms (\cite{Amann2,Pazy}):
\begin{align}
            & \|S_t x\|_{\cB_\alpha} \lesssim \|x\|_{\cB_\alpha}\label{semigroup} \\
     &\|S_tx\|_{\cB_{\alpha+\sigma}}\lesssim t^{-\sigma}\|x\|_{\cB_{\alpha}}\label{hg:2}\\
           & \|(S_t-I) x\|_{\cB_{\alpha}}\lesssim t^\sigma \|x\|_{\cB_{\alpha+\sigma}}.\label{hg:1}
         \end{align}
We further use the following properties of fractional Sobolev spaces.
\begin{lemma}\label{sobolev} 
  \begin{itemize}
      \item Let $s>\frac{d}{2}$. Then $H^{s}(\T^d)$ is an algebra, meaning that if $f\in H^s(\T^d)$ and $g\in H^s(\T^d)$ the product $fg\in H^s(\T^d)$ and
\begin{equation*}
\Vert fg\Vert _{H^{s}}\lesssim \Vert f\Vert _{H^{s}}\Vert g\Vert _{H^{s}}.
\end{equation*}
\item Let $s,r,t>0$ such that $s+r>t+\frac{d}{2}$. If $f\in H^s(\T^d)$, $g\in H^r(\T^d)$, then $fg\in H^t(\T^d)$ and 
\begin{align}\label{sobolev:product}
    \| fg\|_{H^t} \lesssim \|f\|_{H^s} \|g\|_{H^r}.
\end{align}
  \end{itemize}
\end{lemma}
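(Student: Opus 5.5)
The plan is to work on the Fourier side, since on $\T^d$ the Sobolev norms are explicit: $\|f\|_{H^s}^2=\sum_{k\in\mathbb{Z}^d}\langle k\rangle^{2s}|\hat f(k)|^2$ with $\langle k\rangle=(1+|k|^2)^{1/2}$. The product has coefficients $\widehat{fg}(k)=\sum_{m}\hat f(k-m)\hat g(m)$. We prove the second item first, in the form $\|fg\|_{H^t}\lesssim\|f\|_{H^s}\|g\|_{H^r}$ under $s,r\ge t\ge0$ and $s+r>t+\frac d2$. The first item is then the special case $s=r=t>\frac d2$.

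For the second item, the key pointwise inequality is
\begin{equation*}
\langle k\rangle^{t}\lesssim \langle k-m\rangle^{t}+\langle m\rangle^{t}\qquad(t\ge 0),
\end{equation*}
which follows from $\langle k\rangle\le \langle k-m\rangle+\langle m\rangle$. With it we split
\begin{equation*}
\langle k\rangle^t|\widehat{fg}(k)|\lesssim \sum_m \langle k-m\rangle^t|\hat f(k-m)||\hat g(m)|+\sum_m|\hat f(k-m)|\langle m\rangle^t|\hat g(m)|.
\end{equation*}
We treat the first sum; the second is symmetric with the roles of $s$ and $r$ exchanged.

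Write the first sum as $a*b$ with $a(n)=\langle n\rangle^t|\hat f(n)|$ and $b(m)=|\hat g(m)|$. Using $\langle n\rangle^{t}=\langle n\rangle^{s}\langle n\rangle^{-(s-t)}$ and Young's inequality for sequences, we put the $\ell^2$ norm on one factor and the $\ell^1$ norm on the other. The cleanest route is to split dyadically, or to use Young with Hölder exponents $\ell^{p}*\ell^{q}\subset\ell^2$ where $\frac1p+\frac1q=\frac32$. By Hölder, $\|\langle n\rangle^{-\sigma}c\|_{\ell^p}\le\|c\|_{\ell^2}\|\langle n\rangle^{-\sigma}\|_{\ell^{2p/(2-p)}}$, and the weight is summable iff $\sigma\cdot\frac{2p}{2-p}>d$. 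We take $\sigma_1=s-t$ on $a$ and $\sigma_2=r$ on $b$. We then need exponents $\frac1{p_i}=\frac12+\theta_i$ with $\theta_1+\theta_2=\frac12$ and $\sigma_i>d\theta_i$. This is possible exactly when $\sigma_1+\sigma_2>\frac d2$, i.e. $s+r>t+\frac d2$.

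The main obstacle is the endpoint bookkeeping. The constraint $\theta_i\in[0,\frac12]$ must be allowed at $\theta_i=0$, so that $p=2$ and no weight is needed. If one $\sigma_i$ is $0$, for instance $s=t$, all decay is put on the other factor, which requires $r>\frac d2$; this is implied by the hypothesis. If $t$ exceeds $s$ or $r$, the elementary inequality still holds, but the bookkeeping needs $s,r\ge t$. That holds in every application in the paper, where only products of a smooth $\xi$ with a function of lower regularity occur, and we would state this mild restriction or handle it by the symmetric term. Summing the two contributions in $\ell^2(k)$ gives the second item, and the algebra property follows by taking $s=r=t$.
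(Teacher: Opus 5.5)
The paper does not prove this lemma. It quotes it as a standard Sobolev multiplication estimate, so your argument is a self-contained substitute rather than a competitor to anything in the text. In the range you work in, it is correct. The weight bound $\langle k\rangle^t\lesssim\langle k-m\rangle^t+\langle m\rangle^t$ follows from $\langle a+b\rangle\le\langle a\rangle+\langle b\rangle$. Young's inequality $\ell^{p_1}*\ell^{p_2}\subset\ell^2$ with $\frac1{p_1}+\frac1{p_2}=\frac32$, combined with H\"older against $\langle n\rangle^{-\sigma_i}\in\ell^{1/\theta_i}(\mathbb{Z}^d)$ (read as $\ell^\infty$ when $\theta_i=0$), closes exactly when $\sigma_1+\sigma_2>\frac d2$. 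For both halves of the split, $(\sigma_1,\sigma_2)=(s-t,r)$ and $(s,r-t)$, this is precisely the hypothesis $s+r>t+\frac d2$. Your route gives an elementary proof via Plancherel. It gives up generality: the textbook multiplication theorems, proved with Littlewood--Paley/paraproduct arguments, also cover $L^p$-based and negative-order spaces, but the paper never needs those.

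The one thing to correct is how you read the restriction $s,r\ge t$. It is not an artifact of your bookkeeping, and it cannot be recovered from the symmetric term or by any other argument. The second item as printed in the paper is false without it. Take $g\equiv 1$, which lies in $H^r(\mathbb{T}^d)$ for every $r$, and $f=e^{ik\cdot x}$. If $t>s$ and $r$ is large enough that $s+r>t+\frac d2$, the claimed estimate reduces to $\langle k\rangle^{t}\lesssim\langle k\rangle^{s}$ uniformly in $k\in\mathbb{Z}^d$, which fails as $|k|\to\infty$. Symmetrically, take $f\equiv 1$ when $t>r$.

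So what you actually prove is the correct lemma: $\|fg\|_{H^t}\lesssim\|f\|_{H^s}\|g\|_{H^r}$ for $0\le t\le\min(s,r)$ and $s+r>t+\frac d2$. It should replace the printed statement outright, not be offered as a mild restriction. As you observe, every use in the paper stays in this range: the corollary for $f\cdot\nabla g$, the bounds on $u\cdot\nabla\omega$ in $\mathcal{B}_{\alpha-1/2}$, and $\xi\cdot\nabla\omega$ with smooth $\xi$. The algebra property is the case $s=r=t>\frac d2$.
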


An immediate consequence of \eqref{sobolev:product} reads as 
\begin{corollary}
    Let $s>\frac{d}{2}$ and $f\in H^s(\T^d)$ and $g\in H^{s+1}(\T^{d})$. Then $f\cdot \nabla g\in H^s(\T^d)$ and
    \[ \| f\cdot \nabla g \|_{H^s} \lesssim C \|f\|_{H^s} \| g\|_{H^{s+1}}. \]
\end{corollary}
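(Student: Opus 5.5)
The estimate follows by applying the product estimate \eqref{sobolev:product} of Lemma \ref{sobolev} componentwise, with the regularity indices chosen so that its hypothesis $s+r>t+\frac d2$ holds.

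First we expand the product in coordinates. Write $f=(f_1,\dots,f_d)$, so that
\[
f\cdot\nabla g=\sum_{i=1}^d f_i\,\partial_i g .
\]
By the triangle inequality it suffices to bound each term $\|f_i\,\partial_i g\|_{H^s}$. Here $\|f_i\|_{H^s}\le \|f\|_{H^s}$, where the vector norm is understood componentwise.

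Next we control the derivative. Differentiation is a bounded map $H^{s+1}(\T^d)\to H^s(\T^d)$. This is immediate on the Fourier side, since $|k_i|\,(1+|k|^2)^{s/2}\le (1+|k|^2)^{(s+1)/2}$. Hence
\[
\|\partial_i g\|_{H^s}\le \|g\|_{H^{s+1}} .
\]

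Finally we apply the algebra property. Since $s>\frac d2$, the first item of Lemma \ref{sobolev} applies to $f_i\in H^s$ and $\partial_i g\in H^s$. Equivalently, we may use \eqref{sobolev:product} with both input indices equal to $s$ and output index $t=s$, since $s+s>s+\frac d2$ is exactly the assumption $s>\frac d2$. This gives
\[
\|f_i\,\partial_i g\|_{H^s}\lesssim \|f_i\|_{H^s}\,\|\partial_i g\|_{H^s}\lesssim \|f\|_{H^s}\,\|g\|_{H^{s+1}} .
\]
Summing over $i$ yields the claim, with the implicit constant depending only on $s$ and $d$. In particular $f\cdot\nabla g\in H^s(\T^d)$.

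There is no genuine obstacle. The only point needing care is the choice of indices: the derivative must be placed on $g$, which lowers its regularity from $s+1$ to $s$, and one must then check that the threshold $s>\frac d2$ is exactly what the algebra property requires. The stray constant $C$ in the statement is absorbed into $\lesssim$.
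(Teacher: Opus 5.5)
Your proof is correct and follows exactly the route the paper intends: the paper gives no separate argument and calls the corollary an immediate consequence of \eqref{sobolev:product}. That amounts to applying the estimate (or the algebra property) to $f_i\in H^s$ and $\partial_i g\in H^s$ with output index $t=s$, which is what you do after making the bound $\|\partial_i g\|_{H^s}\le\|g\|_{H^{s+1}}$ explicit. Your choice of equal indices is also the safe one, since \eqref{sobolev:product} as stated omits the usual side condition that both input indices be at least $t$.
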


\subsection{Definitions of solutions}

We consider the two-dimensional incompressible vorticity equation perturbed by transport-type fractional Brownian noise
\begin{equation}\label{eq2}
d\omega_t+u_{t}\cdot \nabla \omega_tdt+\displaystyle
\mathcal{L}_{\xi}\omega_t dW_{t}^{H}=\Delta\omega_tdt
\end{equation}
with initial condition $\omega _{0}$, where $u$ is the velocity of the incompressible fluid, meaning that $\nabla \cdot u = 0$.~Furthermore, $\omega_t=curl\ u_t$ is the corresponding fluid vorticity, $\xi$ is a time-independent divergence-free vector field, the operator $\mathcal{L}$ is given by $\mathcal{L}_{\xi}\omega_t := \xi \cdot \nabla \omega_t$, and $W^H$ is a fractional Brownian motion with Hurst parameter $H >1/2$.

\begin{definition}
Let $\xi\in \cB_\infty$.~We call a process $\omega\in C([0,T];\cB_\alpha)$ which satisfies  the variation of constants formula 
\begin{align*}
    \omega_t = S_{t}\omega_0 - \int_0^t S_{t-s}(u_s\cdot \nabla \omega_s)ds - \int_0^t S_{t-s}(\xi \cdot \nabla \omega_s) dW^H_s
\end{align*}
a \emph{mild solution} for~\eqref{eq2}.
\end{definition}

\begin{definition}
Let $\xi \in \mathcal{B}_\infty$. We say that a process
$\omega \in C([0,T];\mathcal{B}_\alpha)
$
is a \emph{weak solution} to~\eqref{eq2} if for every test function
$\varphi \in \mathcal{B}_\infty = C^\infty(\mathbb{T}^2)$ and every $t \in [0,T]$, it holds almost surely that
\begin{align*}
\langle \omega_t,\varphi\rangle
&= \langle \omega_0,\varphi\rangle + \int_0^t\langle \omega_s, \Delta \varphi\rangle -\int_0^t \langle \omega_s, (u_s \cdot \nabla)\varphi\rangle ds - \int_0^t \langle \omega_s, (\xi \cdot \nabla)\varphi\rangle dW^H_s
\end{align*}
where $\langle \cdot,\cdot\rangle$ denotes the duality between distributions and smooth test functions on $\mathbb{T}^2$.
\end{definition}

\begin{remark}
    The techniques developed in this work are applicable to the case when we have finitely many vector fields $\xi$ and finitely many independent fractional Brownian motions. 
\end{remark}

\subsection{Stochastic fluid equations in a general setting}\label{sect:general}

Although we treat in detail the two-dimensional vorticity case, we provide below a general framework under which our methodology holds, provided certain conditions are fulfilled.  This includes a general form of the fluid equation, as well as the assumptions one needs to impose on all linear and nonlinear operators in order for all procedures to apply.~We consider the evolution equation%
\begin{equation}
d\omega _{t}+\left( \mathcal{D+E}\right) \omega _{t}dt+\mathcal{F}\omega
_{t}dW_{t}^{H}=0  \label{general}
\end{equation}
under the following assumptions on the coefficients.

\begin{assumptions}{\em (Differential operator $\mathcal{E}$)}\label{linear}
We consider a family of 
interpolation spaces endowed with the norms $(\|\cdot\|_{\alpha})_{\alpha\in \R}$, such that $\cB_\beta \hookrightarrow \cB_\alpha$ for $\alpha<\beta$ and the following interpolation inequality holds
\begin{align}\label{interpolation:ineq}
	\|x\|^{\alpha_3-\alpha_1}_{\alpha_2} \lesssim \|x\|^{\alpha_3-\alpha_2}_{\alpha_1} \|x\|^{\alpha_2-\alpha_1}_{\alpha_3},
\end{align}
for $\alpha_1\leq \alpha_2\leq \alpha_3$ and $x\in  \cB_{\alpha_3}$.
We assume that $\mathcal{E}$ generates an analytic semigroup on $\cB_\alpha$.~In particular, it is well-known that the estimates~\eqref{semigroup}--\eqref{hg:1} are valid in this general case, see~\cite{Amann2}.

\end{assumptions}

For similar smoothing properties for non-autonomous or quasilinear parabolic evolution equations, we refer to~\cite{GHN,Tim,AN}. 

\begin{assumptions}{\em (Nonlinear drift)}\label{ass:d}
We assume that $\mathcal{D}:\cB_\alpha\to \cB_{\alpha-\beta}$ for $\beta\in[0,1)$ and there exist constants $C,p, q \geq 0$ such that 
\begin{eqnarray*}
\left\vert \left\vert \mathcal{D\omega }\right\vert \right\vert _{\mathcal{B}%
_{\alpha -\beta }} &\leq &C\left\vert \left\vert \mathcal{\omega }%
\right\vert \right\vert _{\mathcal{B}_{\alpha }}^{q} \\
\left\vert \left\vert \mathcal{D\omega }^{1}\mathcal{-D\omega }%
^{2}\right\vert \right\vert _{\mathcal{B}_{\alpha -\beta }} &\leq &C\max
\left( \left\vert \left\vert \mathcal{\omega }^{1}\right\vert \right\vert _{%
\mathcal{B}_{\alpha }}^{p},\left\vert \left\vert \mathcal{\omega }%
^{2}\right\vert \right\vert _{\mathcal{B}_{\alpha }}^{p}\right) \left\vert
\left\vert \mathcal{\omega }^{1}\mathcal{-\omega }^{2}\right\vert
\right\vert _{\mathcal{B}_{\alpha }}.
\end{eqnarray*}
\end{assumptions}

\begin{assumptions}{\em (Nonlinear diffusion coefficient)}\label{ass:f}
We assume that there exists $\theta\in[0,\gamma)$ such that $\theta + \gamma \in [0,\alpha)$ and $\mathcal{F}:\cB_\alpha\to \cB_{\alpha-\theta}$, $\mathcal{F}:\cB_{\alpha-\gamma}\to \cB_{\alpha-\gamma-\theta}$ is twice Fr\'echet differentiable with bounded derivatives. In particular,  
\begin{eqnarray*}
\left\vert \left\vert \mathcal{F\omega }\right\vert \right\vert _{\mathcal{B}%
_{\alpha -\theta }} &\leq &C\left\vert \left\vert \mathcal{\omega }%
\right\vert \right\vert _{\mathcal{B}_{\alpha }} \\
\left\vert \left\vert \mathcal{F\omega }^{1}\mathcal{-F\omega }%
^{2}\right\vert \right\vert _{\mathcal{B}_{\alpha -\theta} } &\leq &C\left\vert
\left\vert \mathcal{\omega }^{1}\mathcal{-\omega }^{2}\right\vert
\right\vert _{\mathcal{B}_{\alpha }}.
\end{eqnarray*}
\end{assumptions}
We provide the proofs for this general case in section \ref{sect:genproofs} below.

\section{Sewing lemma revisited}\label{sect:sewinglemma}

Since the stochastic convolution improves the
spatial regularity by an amount $\sigma <H$, with $H>\tfrac{1}{2}$ (see Corollary~\eqref{i:reg}), it is possible to analyse the transport-type noise term using a Young integration
approach. This can be constructed by similar techniques to the rough noise case considered in \cite{GH,GHN,GubinelliTindel,HN19}. We present a self-contained proof of this construction, yielding optimal bounds on the integral and refer to Appendix \ref{a:young} for a more rough path flavoured argument.~Moreover, based on this construction we can directly solve the equation \eqref{eq:mainStrat} in $V^T$ and not in a larger space and then use regularizing properties of analytic semigroups to conclude that this indeed belongs to $V^T$, as frequently done in rough path theory \cite{GH,GHN,AN}.  \\
 
In the following, we fix an arbitrary process 
\begin{equation*}
Y\in C\bigl(\lbrack 0,T],\mathcal{B}_{\alpha -\tfrac{1}{2}}\bigr)\cap
C^{\gamma }\bigl(\lbrack 0,T],\mathcal{B}_{\alpha -\gamma -\tfrac{1}{2}}%
\bigr),
\end{equation*}%
and exploit the spatial smoothing properties of the semigroup $%
(S_{t})_{t\geq 0}$ to show that the process $I=\left\{ I_{t},t\in \left[ 0,T%
\right] \right\} $ 
\begin{equation*}
I_{t}:=\int_{0}^{t}S_{t-r}Y_{r}\,\mathrm{d}W_{r}^{H}
\end{equation*}%
is well defined and belongs to the space 
\begin{equation*}
C\bigl(\lbrack 0,T],\mathcal{B}_{\alpha }\bigr)\cap C^{\gamma }\bigl(\lbrack
0,T],\mathcal{B}_{\alpha -\gamma }\bigr).
\end{equation*}%
This abstract result applies in particular to the process $\xi \cdot \nabla
\omega $, which satisfies the above regularity assumptions whenever $\xi\in\cB_\infty$ and
\begin{equation*}
\omega \in C\bigl(\lbrack 0,T],\mathcal{B}_{\alpha }\bigr)\cap C^{\gamma }%
\bigl(\lbrack 0,T],\mathcal{B}_{\alpha -\gamma }\bigr).
\end{equation*}

A key advantage of this approach is its flexibility. By relying only on
Young integration combined with semigroup smoothing, it allows us to treat a
much broader class of equations, not necessarily restricted to
transport-type operators. The assumptions are formulated directly in terms
of time regularity and spatial smoothing, making the method robust and
straightforward to verify (see Assumption \ref{ass:f} for general condition of the noise
operators).

By contrast, an approach based on rough path theory would impose
substantially stronger structural and regularity requirements. In that
framework, the operator appearing in the stochastic integral would need to
satisfy additional smoothness assumptions in to control the remainder terms
arising from discretizations of the equation and coupling the construction
of the stochastic integral with the solution theory of the vorticity
equation itself, following the classical Davie method. These additional
technical requirements restrict the class of admissible equations and
obscure the central role played here by the smoothing properties of the
semigroup.

To define $I_{t}$, we introduce the following discrete version of the
integral 
\begin{equation*}
I_{t}^{k}=\sum_{\left[ \frac{n}{2^{k}},\frac{n+1}{2^{k}}\right] \subset
\lbrack 0,t]}S_{t-\frac{n}{2^{k}}}Y_{\frac{n}{2^{k}}}\left( W_{\frac{n+1}{%
2^{k}}}^{H}-W_{\frac{n}{2^{k}}}^{H}\right) ,~~k\geq 0\text{ }.
\end{equation*}

\begin{theorem}[Young integral]\label{s}
Let $Y\in C\left( [0,T],\mathcal{B}_{\alpha -\frac{1}{2}}\right) \cap
C^{\gamma }\left( [0,T],\mathcal{B}_{\alpha -\gamma -\frac{1}{2}}\right) $.
Then the sequence of processes $\left( I_{t}^{k}\right) _{k}$ is Cauchy in
the space $\mathcal{B}_{\alpha }$ and we define the integral 
\begin{equation*}
I_{t}=\int_{0}^{t}S_{t-r}Y_{r}~{\mathnormal{d}}W_{r}^{H}:=\lim_{k\rightarrow
\infty }I_{t}^{k}
\end{equation*}%
to be the limit of the sequence. Moreover $I\in C\left( [0,T],\mathcal{B}%
_{\alpha }\right) \cap C^{\gamma }\left( [0,T],\mathcal{B}_{\alpha -\gamma
}\right) $.
\end{theorem}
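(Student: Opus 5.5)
The plan is a dyadic sewing argument in which the semigroup absorbs the half-derivative loss of $Y$. We fix a pathwise realisation of $W^H$ and a Hölder exponent $\gamma$ with $1/2<\gamma<H$, so that $\|W^H_{ts}\|\le C_W|t-s|^{\gamma}$ on $[0,T]$. We also assume, as is implicit in the setting, that $2\gamma>1$, which is needed for the dyadic sums to converge.

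\textbf{Step 1: refinement identity.} Passing from level $k$ to level $k+1$, each interval $[\tfrac{n}{2^k},\tfrac{n+1}{2^k}]=[a,c]$ with midpoint $b$ contributes
\begin{align*}
I^{k+1}_t-I^k_t=\sum_{n}\Bigl(S_{t-b}Y_b-S_{t-a}Y_a\Bigr)W^H_{bc}+R^k_t,
\end{align*}
where $R^k_t$ collects the boundary term from a possible extra half-interval near $t$.

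The bracket splits as
\[
S_{t-b}(Y_b-Y_a)+\bigl(S_{t-b}-S_{t-a}\bigr)Y_a=S_{t-b}Y_{ab}+S_{t-b}(I-S_{b-a})Y_a .
\]

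\textbf{Step 2: bounding the two pieces.} For the first piece we use \eqref{hg:2} with $\sigma=\gamma+\tfrac12$. Strictly this needs $\sigma\le1$, which holds since $\gamma<1/2$ is false; in that case we use $\sigma=1$ and interpolate instead. The estimate is
\[
\|S_{t-b}Y_{ab}\|_\alpha\lesssim (t-b)^{-\gamma-\frac12}\,2^{-k\gamma}\|Y\|_{\gamma,\alpha-\gamma-\frac12}.
\]

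For the second piece we use \eqref{hg:2} with $\sigma=\tfrac12+\varepsilon$ and \eqref{hg:1} with exponent $\gamma$:
\[
\|S_{t-b}(I-S_{b-a})Y_a\|_\alpha\lesssim (t-b)^{-\frac12-\gamma}\,2^{-k\gamma}\|Y\|_{0,\alpha-\frac12}.
\]
Multiplying by $|W^H_{bc}|\lesssim 2^{-k\gamma}$ and summing over $n$ gives
\[
2^{-2k\gamma}\sum_n (t-b_n)^{-\frac12-\gamma}.
\]
This sum is $\lesssim 2^{k}\int_0^t r^{-\frac12-\gamma}\,dr\cdot(\text{const})$ as long as $\tfrac12+\gamma<1$.

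\textbf{Step 3: the singularity, which is the main obstacle.} Since $\gamma>1/2$, the exponent $\tfrac12+\gamma$ exceeds $1$ and the singularity is not integrable. The sums near $r=t$ must therefore be handled differently.

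The first fix to try is to split the intervals into far ones ($t-b\ge 2^{-k}$) and near ones. On far intervals we trade smoothing against time regularity by interpolation, using \eqref{interpolation:ineq} to choose $\sigma<1$, and accept a slightly smaller gain $\kappa>0$. This should give
\[
\|I^{k+1}_t-I^k_t\|_\alpha\lesssim 2^{-k(2\gamma-1-\delta)}\Bigl(\|Y\|_{0,\alpha-\frac12}+\|Y\|_{\gamma,\alpha-\gamma-\frac12}\Bigr)C_W ,
\]
with $\delta$ absorbing the logarithmic or near-diagonal terms. On the $O(1)$ near intervals we bound directly, using $S_{t-a}$ with $t-a\sim2^{-k}$. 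This costs $2^{k/2}$, against a gain of $2^{-k\gamma}$ from $W^H$ and the remaining gain from the $Y$ increment.

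The near-diagonal terms are where I expect the real difficulty. If the exponents fail to close, the fallback is to replace $Y_a$ by a semigroup-corrected germ $S_{t-a}Y_a$ that already carries the smoothing, and to estimate $\delta\Xi_{a,b,c}$ in the spirit of the mild sewing lemma of \cite{GubinelliTindel}.

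\textbf{Step 4: conclusion.} Summing the geometric bounds shows $(I^k_t)_k$ is Cauchy in $\cB_\alpha$, uniformly in $t$; since each $I^k$ is continuous away from the grid and the jumps vanish, $I\in C([0,T],\cB_\alpha)$.

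For Hölder regularity we write
\[
I_t-I_s=(S_{t-s}-I)I_s+\int_s^t S_{t-r}Y_r\,dW^H_r .
\]
The first term is bounded by \eqref{hg:1} with $\sigma=\gamma$, giving $(t-s)^\gamma\|I_s\|_\alpha$ in $\cB_{\alpha-\gamma}$. The second term is bounded by rerunning the dyadic estimate on $[s,t]$ in the weaker norm $\cB_{\alpha-\gamma}$; there the singularity exponent drops by $\gamma$ and the singularity becomes integrable, which yields a factor $(t-s)^\gamma$.
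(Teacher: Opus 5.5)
The decisive estimate, that $(I^k_t)_k$ is Cauchy in $\cB_\alpha$, is not actually proved. In Step~3 you say the far-interval bound ``should give'' a geometric rate, expect the near-diagonal exponents might not close, and fall back on a different construction.

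You have also put the difficulty in the wrong place. The near-diagonal and boundary terms are harmless. Bounding $S_{t-a}Y_aW^H_{bc}$ and $S_{t-b}Y_bW^H_{bc}$ separately costs $(t-a)^{-1/2}\lesssim 2^{k/2}$ against $|W^H_{bc}|\lesssim 2^{-k\gamma}$, which gives the summable factor $2^{-k(\gamma-\frac12)}$. Conversely, removing $O(1)$ near intervals does not help the rest: the remaining sum $\sum_n (t-b_n)^{-\frac12-\gamma}$ is still dominated by intervals a few multiples of $2^{-k}$ away from $t$.

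The missing observation is the heart of the paper's Step~1. A non-integrable singularity is irrelevant for a sum over a grid that stays away from $t$: each interval $[n2^{-k},(n+1)2^{-k}]\subset[0,t]$ has midpoint $b_n$ with $t-b_n\ge 2^{-(k+1)}$. Hence, for $0<\epsilon<\gamma-\tfrac12$,
\[
(t-b_n)^{-\frac12-\gamma}\le 2^{(k+1)(\gamma-\frac12+\epsilon)}\,(t-b_n)^{-1+\epsilon}.
\]
Combined with $\sum_n (t-b_n)^{-1+\epsilon}\le 2^{k+1}\int_0^t (t-r)^{-1+\epsilon}\,dr$, this gives $2^{-2k\gamma}\sum_n (t-b_n)^{-\frac12-\gamma}\lesssim \epsilon^{-1}t^{\epsilon}\,2^{-k(\gamma-\frac12-\epsilon)}$, uniformly in $t$. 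The same trade handles the $S_{t-b}(I-S_{b-a})Y_a$ piece. Equivalently, the values $t-b_n$ are $\ge 2^{-k}(m+\tfrac12)$ for distinct $m\ge 0$, so the sum is $\lesssim 2^{k(\frac12+\gamma)}\sum_{m\ge0}(m+\tfrac12)^{-\frac12-\gamma}$, which is finite precisely because $\tfrac12+\gamma>1$.

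Consequently the rate you can get in $\cB_\alpha$ is $\gamma-\tfrac12$ (up to $\epsilon$), so the $\delta$ in your claimed $2^{-k(2\gamma-1-\delta)}$ cannot be small. Your interpolation idea would also work: move $Y_{ab}$ into $\cB_{\alpha-\frac12-\theta\gamma}$ with $1-\gamma<\theta\gamma<\tfrac12$. But it is the same exponent trade in disguise, and it has to be carried out.

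Two smaller points.
\begin{itemize}
\item In Step~2 there is no need to cap $\sigma$ at $1$. For an analytic semigroup, \eqref{hg:2} holds for every $\sigma\ge 0$ on bounded time intervals, and the paper uses $\sigma=\gamma+\tfrac12$. Capping at $\sigma=1$ would not map $\cB_{\alpha-\gamma-\frac12}$ into $\cB_\alpha$.
\item The splitting $I_t=S_{t-s}I_s+\int_s^t S_{t-r}Y_r\,dW^H_r$ used in Step~4 needs proof for your dyadic-grid definition. The paper establishes it for dyadic $s$ and extends to general $s$ by approximation using \eqref{bddd13}.
\end{itemize}
With these repairs, your Step~4 matches the paper's Steps~2--3. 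This includes the $\cB_{\alpha-\gamma}$ estimate, where the singularity becomes the integrable $(t-b)^{-1/2}$.
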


\noindent\textbf{Proof. }

\noindent \textbf{Step 1. Convergence of the }$\left( I_{t}^{k}\right) _{k}$%
\textbf{\ sequence in the space }$\cB_{\alpha }$\textbf{.}

Let start by analysing the first term of the sequence $\left( I^{k}\right)
_{k}$. Note that the index $k$ has to be sufficiently high so that we have at
least one dyadic interval $\left[ 0,\frac{1}{2^{k_{0}}}\right] \ $in $\left[
0,t\right] $ so $k\geq k_{0}$, where $\frac{1}{2^{k_{0}}}\leq t$. \ Of
course if $t\geq 1,$ then~$k_{0}=0$. The first term of the sequence is therefore $I^{k_{0}}$. Observe that, for $0\leq t\leq T$, we have%
\begin{equation*}
\begin{aligned}
\Vert I_{t}^{k_{0}}\Vert _{\mathcal{B}_{\alpha }} &=\Vert S_{t}Y_{0}\left(
W_{\frac{1}{2^{k_{0}}}}^{H}-W_{0}^{H}\right) \Vert _{\mathcal{B}_{\alpha }}
\leq K_{W}^{\gamma }\left( \frac{1}{2^{k_{0}}}\right) ^{\gamma }\Vert
S_{t}Y_{0}\Vert _{\mathcal{B}_{\alpha }} \\
&\leq CK_{W}^{\gamma }\left( \frac{1}{2^{k_{0}}}\right) ^{\gamma -\frac{1}{2%
}}\Vert Y_{0}\Vert _{\mathcal{B}_{\alpha -\frac{1}{2}}} \leq CK_{W}^{\gamma }T^{\gamma -\frac{1}{2}}\Vert Y_{0}\Vert _{\mathcal{B}%
_{\alpha -\frac{1}{2}}}.
\end{aligned}
\end{equation*}%
So indeed, $I_{t}^{k_{0}}\in \mathcal{B}_{\alpha }$. We prove next that $%
\left( I_{t}^{k}\right) _{k}$ is a Cauchy sequence in $\mathcal{B}_{\alpha }$%
-norm by controlling the difference between consecutive terms of the
sequence. We have that 
\begin{equation}
I_{t}^{k}-I_{t}^{k+1}=\sum_{\left[ \frac{n}{2^{k}},\frac{n+1}{2^{k}}\right]
\subset \lbrack 0,t]}a_{n}^{k}+R_{t}^{k+1},  \label{representation}
\end{equation}%
where 
\begin{equation*}
a_{n}^{k}:=\left( S_{t-\frac{2n}{2^{k+1}}}Y_{\frac{2n}{2^{n+1}}}-S_{t-\frac{%
\left( 2n+1\right) }{2^{k+1}}}Y_{\frac{\left( 2n+1\right) }{2^{k+1}}}\right)
\left( W_{\frac{2\left( n+1\right) }{2^{k+1}}}^{H}-W_{\frac{\left(
2n+1\right) }{2^{k+1}}}^{H}\right) .
\end{equation*}%
and $R_{t}^{k+1}$ is a term that only appears in the sum when that last
interval (closest to $t$) cannot be paired with one of the previous
intervals. That is 
\begin{equation*}
R_{t}^{k+1}=\left\{ 
\begin{array}{cc}
0 & if~~\left[ t2^{k+1}\right] ~~is~even \\ 
S_{t-\frac{\left[ t2^{k+1}\right] -1}{2^{k+1}}}Y_{\frac{\left[ t2^{k+1}%
\right] -1}{2^{k+1}}}\left( W_{\frac{\left[ t2^{k+1}\right] }{2^{k}}}^{H}-W_{%
\frac{\left[ t2^{k+1}\right] -1}{2^{k}}}^{H}\right) & if~~\left[ t2^{k+1}%
\right] ~~is~odd%
\end{array}%
\right. .
\end{equation*}%
and therefore 
\begin{eqnarray*}
\left\vert \left\vert R_{t}^{k+1}\right\vert \right\vert _{\mathcal{B}%
_{\alpha }} &\leq &C\left( t-\frac{\left[ t2^{k+1}\right] -1}{2^{k+1}}%
\right) ^{-\frac{1}{2}}K_{W}^{\gamma }\left\vert \left\vert Y\right\vert
\right\vert _{0,\alpha -\frac{1}{2}}\left( \frac{1}{2^{k+1}}\right) ^{\gamma
} \\
&\leq &C\left( \frac{\{t2^{k+1}\}+1}{2^{k+1}}\right) ^{-\frac{1}{2}%
}K_{W}^{\gamma }\left\vert \left\vert Y\right\vert \right\vert _{0,\alpha -%
\frac{1}{2}}\left( \frac{1}{2^{k+1}}\right) ^{\gamma } \\
&\leq &CK_{W}^{\gamma }\left\vert \left\vert Y\right\vert \right\vert
_{0,\alpha -\frac{1}{2}}\left( \frac{1}{2^{k+1}}\right) ^{\gamma -\frac{1}{2}%
} \\
&\leq &CK_{W}^{\gamma }\left\vert \left\vert Y\right\vert \right\vert
_{0,\alpha -\frac{1}{2}}\left( \frac{1}{2^{k+1}}\right) ^{\gamma -\frac{1}{2}%
} \\
&\leq &CK_{W}^{\gamma }\left\vert \left\vert Y\right\vert \right\vert
_{0,\alpha -\frac{1}{2}}t^{\gamma -\frac{1}{2}}\left( \frac{1}{2^{k+1-k_{0}}}%
\right) ^{\gamma -\frac{1}{2}}.
\end{eqnarray*}%
Note that 
\begin{equation*}
a_{n}^{k}=b_{n}^{k}+c_{n}^{k},
\end{equation*}%
where 
\begin{eqnarray*}
b_{n}^{k} &:&=\left( S_{t-\frac{2n}{2^{k+1}}}-S_{t-\frac{\left( 2n+1\right) 
}{2^{k+1}}}\right) Y_{\frac{2n}{2^{k+1}}}\left( W_{\frac{2\left( n+1\right) 
}{2^{k+1}}}^{H}-W_{\frac{\left( 2n+1\right) }{2^{k+1}}}^{H}\right) \\
c_{n}^{k} &:&=S_{t-\frac{2n+1}{2^{k+1}}}\left( Y_{\frac{2n}{2^{k+1}}}-Y_{%
\frac{\left( 2n+1\right) }{2^{k+1}}}\right) \left( W_{\frac{2\left(
n+1\right) }{2^{k+1}}}^{H}-W_{\frac{\left( 2n+1\right) }{2^{k+1}}%
}^{H}\right) .
\end{eqnarray*}%
We have 
\begin{eqnarray*}
\Vert c_{n}^{k}\Vert _{\mathcal{B}_{\alpha }} &=&\left\Vert S_{t-\frac{%
\left( 2n+1\right) }{2^{k+1}}}\left( Y_{\frac{n}{2^{k}}}-Y_{\frac{\left(
2n+1\right) }{2^{k+1}}}\right) \right\Vert _{\mathcal{B}_{\alpha
}}K_{W}^{\gamma }\frac{1}{2^{(k+1)\gamma }} \\
&\leq &\left( t-\frac{\left( 2n+1\right) }{2^{k+1}}\right) ^{-\frac{1}{2}%
-\gamma }\left\Vert Y_{\frac{n}{2^{k}}}-Y_{\frac{\left( 2n+1\right) }{2^{k+1}%
}}\right\Vert _{\alpha -\gamma -\frac{1}{2}}K_{W}^{\gamma }\frac{1}{%
2^{(k+1)\gamma }} \\
&\leq &\left( t-\frac{\left( 2n+1\right) }{2^{k+1}}\right) ^{-\frac{1}{2}%
-\gamma }K_{W}^{\gamma }\left\vert \left\vert Y\right\vert \right\vert
_{\alpha ,\alpha -\gamma -\frac{1}{2}}\frac{1}{2^{(k+1)\gamma }}\frac{1}{%
2^{(k+1)\gamma }} \\
&\leq &\left( t-\frac{\left( 2n+1\right) }{2^{k+1}}\right) ^{-\frac{1}{2}%
-\gamma +\left( \gamma -\frac{1}{2}+\epsilon \right) }K_{W}^{\gamma
}\left\vert \left\vert Y\right\vert \right\vert _{\alpha ,\alpha -\gamma -%
\frac{1}{2}}\frac{1}{2^{2\left( k+1\right) \gamma }}\frac{1}{2^{-\left(
k+1\right) \left( \gamma -\frac{1}{2}+\epsilon \right) }} \\
&=&\left( t-\frac{\left( 2n+1\right) }{2^{k+1}}\right) ^{-\frac{1}{2}-\gamma
+\left( \gamma -\frac{1}{2}+\epsilon \right) }K_{W}^{\gamma }\left\vert
\left\vert Y\right\vert \right\vert _{\alpha ,\alpha -\gamma -\frac{1}{2}}%
\frac{1}{2^{\left( k+1\right) \left( \gamma +\frac{1}{2}-\epsilon \right) }},
\end{eqnarray*}%
where we chose $0<\epsilon <\gamma -\frac{1}{2}$ and used the fact that 
\begin{equation*}
\frac{\left( t-\frac{\left( 2n+1\right) }{2^{k+1}}\right) ^{\left( \gamma -%
\frac{1}{2}+\epsilon \right) }}{\frac{1}{2^{\left( k+1\right) \left( \gamma -%
\frac{1}{2}+\epsilon \right) }}}\geq 1.
\end{equation*}%
Summing up over $n$ we obtain%
\begin{eqnarray*}
\sum_{\left[ t_{n}^{k},t_{n+1}^{k}\right] \subset \left[ 0,t\right] }\Vert
c_{n}^{k}\Vert _{\mathcal{B}_{\alpha }} &\leq &K_{W}^{\gamma }\left\vert
\left\vert Y\right\vert \right\vert _{\alpha ,\alpha -\gamma -\frac{1}{2}}%
\frac{1}{2^{\left( k+1\right) \left( \gamma +\frac{1}{2}-\epsilon \right) -k}%
}\sum_{n=0}^{\left[ t2^{k}\right] -1}\left( t-\frac{\left( 2n+1\right) }{%
2^{k+1}}\right) ^{-1+\epsilon }\frac{1}{2^{k}} \\
&\leq &\frac{K_{W}^{\gamma }\left\vert \left\vert Y\right\vert \right\vert
_{\alpha ,\alpha -\gamma -\frac{1}{2}}}{2^{\gamma +\frac{1}{2}-\epsilon }}%
\left( \frac{1}{2^{\gamma +\frac{1}{2}-1-\epsilon }}\right)
^{k}\int_{0}^{t}\left( t-u\right) ^{-1+\epsilon }du \\
&\leq &t^{\epsilon }\frac{K_{W}^{\gamma }\left\vert \left\vert Y\right\vert
\right\vert _{\alpha ,\alpha -\gamma -\frac{1}{2}}}{\epsilon 2^{\gamma +%
\frac{1}{2}-\epsilon }}\left( \frac{1}{2^{\gamma -\frac{1}{2}-\epsilon }}%
\right) ^{k}.
\end{eqnarray*}%
Now we estimate $\Vert b_{n}^{k}\Vert _{\alpha }$. We have, similarly with
the computations above: 
\begin{eqnarray*}
\Vert b_{n}^{k}\Vert _{\mathcal{B}_{\alpha }} &\leq &\left\Vert \left( S_{t-%
\frac{n}{2^{k}}}-S_{t-\frac{2n+1}{2^{k+1}}}\right) Y_{\frac{n}{2^{k}}%
}\right\Vert _{\mathcal{B}_{\alpha }}K_{W}^{\gamma }\frac{1}{2^{\left(
k+1\right) \gamma }} \\
&\leq &\left( \frac{1}{2^{k+1}}\right) ^{\frac{1}{2}}\left( t-\frac{\left(
2n+1\right) }{2^{k+1}}\right) ^{-1}\Vert Y\Vert _{0,\alpha -\frac{1}{2}%
}K_{W}^{\gamma }\frac{1}{2^{\left( k+1\right) \gamma }} \\
&\leq &\Vert Y\Vert _{0,\alpha -\frac{1}{2}}K_{W}^{\gamma }\left( t-\frac{%
\left( 2n+1\right) }{2^{k+1}}\right) ^{-1+\epsilon }\frac{1}{2^{\left(
k+1\right) \left( \gamma +\frac{1}{2}-\epsilon \right) }}
\end{eqnarray*}%
from which we deduce that 
\begin{equation*}
\sum_{n=0}^{\left[ t2^{k}\right] -1}\Vert b_{n}^{k}\Vert _{\mathcal{B}%
_{\alpha }}\leq t^{\epsilon }\frac{\Vert Y\Vert _{0,\alpha -\frac{1}{2}%
}K_{W}^{\gamma }}{\epsilon 2^{\gamma +\frac{1}{2}-\epsilon }}\left( \frac{1}{%
2^{\gamma -\frac{1}{2}-\epsilon }}\right) ^{k}.
\end{equation*}%
It follows that 
\begin{eqnarray*}
\Vert I_{t}^{k}-I_{t}^{k+1}\Vert _{\alpha } &\leq &\sum_{n=0}^{\left[ t2^{k}%
\right] -1}\Vert b_{n}^{k}\Vert _{\alpha }+\Vert c_{n}^{k}\Vert _{\alpha
}+\Vert R_{n}^{k}\Vert _{\alpha } \\
&\leq &t^{\epsilon }\frac{\left( \Vert Y\Vert _{0,\alpha -\frac{1}{2}%
}+\left\vert \left\vert Y\right\vert \right\vert _{\alpha ,\alpha -\gamma -%
\frac{1}{2}}\right) K_{W}^{\gamma }}{\epsilon 2^{\gamma +\frac{1}{2}%
-\epsilon }}\left( \frac{1}{2^{\gamma -\frac{1}{2}-\epsilon }}\right) ^{k} \\
&&+CK_{W}^{\gamma }\left\vert \left\vert Y\right\vert \right\vert _{0,\alpha
-\frac{1}{2}}\left( \frac{1}{2^{k+1}}\right) ^{\gamma -\frac{1}{2}}.
\end{eqnarray*}%
Hence the sequence $\left( I_{t}^{k}\right) $ converges in the $%
\Vert \cdot \Vert _{\alpha }$ norm and 
\begin{eqnarray}
\left\vert \left\vert \lim_{k\rightarrow \infty }I_{t}^{k}\right\vert
\right\vert _{\mathcal{B}_{\alpha }} &\leq &\left\vert \left\vert
I_{t}^{k_{0}}\right\vert \right\vert _{\mathcal{B}_{\alpha
}}+\sum_{k=k_{0}}^{\infty }\Vert I_{t}^{k}-I_{t}^{k+1}\Vert _{\mathcal{B}%
_{\alpha }}  \notag \\
&\leq &CK_{W}^{\gamma }\left\vert \left\vert Y\right\vert \right\vert
_{0,\alpha -\frac{1}{2}}t^{\gamma -\frac{1}{2}}+t^{\epsilon }\frac{\left(
\Vert Y\Vert _{0,\alpha -\frac{1}{2}}+\left\vert \left\vert Y\right\vert
\right\vert _{\alpha ,\alpha -\gamma -\frac{1}{2}}\right) K_{W}^{\gamma }}{%
\epsilon 2^{\gamma +\frac{1}{2}-\epsilon }}\sum_{k\geq k_{0}}^{\infty
}\left( \frac{1}{2^{\gamma -\frac{1}{2}-\epsilon }}\right) ^{k}  \notag \\
&&+CK_{W}^{\gamma }\left\vert \left\vert Y\right\vert \right\vert _{0,\alpha
-\frac{1}{2}}\left( \frac{1}{2^{k+1}}\right) ^{\gamma -\frac{1}{2}}  \notag
\\
&\leq &C\left( \Vert Y\Vert _{0,\alpha -\frac{1}{2}}+\left\vert \left\vert
Y\right\vert \right\vert _{\alpha ,\alpha -\gamma -\frac{1}{2}}\right)
K_{W}^{\gamma }t^{\gamma -\frac{1}{2}}  \label{impbound}
\end{eqnarray}%
as 
\begin{eqnarray*}
\sum_{k\geq k_{0}}^{\infty }\left( \frac{1}{2^{\gamma -\frac{1}{2}-\epsilon }%
}\right) ^{k} &=&\left( \frac{1}{2^{\gamma -\frac{1}{2}-\epsilon }}\right)
^{k_{0}}\sum_{k\geq 0}^{\infty }\left( \frac{1}{2^{\gamma -\frac{1}{2}%
-\epsilon }}\right) ^{k}\leq \left( \frac{1}{2^{k_{0}}}\right) ^{\gamma -%
\frac{1}{2}-\epsilon }\sum_{k\geq 0}^{\infty }\left( \frac{1}{2^{\gamma -%
\frac{1}{2}-\epsilon }}\right) ^{k}\leq Ct^{\gamma -\frac{1}{2}-\epsilon } \\
\sum_{k\geq k_{0}}^{\infty }\left( \frac{1}{2^{k+1}}\right) ^{\gamma -\frac{1%
}{2}} &=&\left( \frac{1}{2}\right) ^{\gamma -\frac{1}{2}}\left( \frac{1}{%
2^{\gamma -\frac{1}{2}}}\right) ^{k_{0}}\sum_{k\geq 0}^{\infty }\left( \frac{%
1}{2^{\gamma -\frac{1}{2}}}\right) ^{k}\leq Ct^{\gamma -\frac{1}{2}}.
\end{eqnarray*}%
As a result, we can indeed define%
\begin{equation*}
I_{t}:=\int_{0}^{t}S_{t-r}Y_{r}~{\mathnormal{d}}W_{r}^{H}
\end{equation*}%
to be the limit of the sequence $I_{t}^{k}$ in $\mathcal{B}_{\alpha }$%
. Moreover, we can also obtain the following immediate extensions and
properties of the integral:

\begin{itemize}
\item For any $0\leq s\leq t\leq p\leq T$, we define the integral 
\begin{equation*}
\int_{s}^{t}S_{p-r}Y_{r}~{\mathnormal{d}}W_{r}^{H}
\end{equation*}%
by the same convergence arguments and, similar to (\ref{impbound}), we have
the following bound on the integral 

\begin{equation}
\left\vert \left\vert \int_{s}^{t}S_{p-r}Y_{r}~{\mathnormal{d}}%
W_{r}^{H}\right\vert \right\vert _{\mathcal{B}_{\alpha }}\leq C\left( \Vert
Y\Vert _{0,\alpha -\frac{1}{2}}+\left\vert \left\vert Y\right\vert
\right\vert _{\alpha ,\alpha -\gamma -\frac{1}{2}}\right) K_{W}^{\gamma
}(t-s)^{\gamma -\frac{1}{2}}.  \label{bddd13}
\end{equation}

\item Using the continuity of the semigroup $S$ we can deduce that 

\begin{equation*}
\int_{s}^{t}S_{p-r}Y_{r}~{\mathnormal{d}}W_{r}^{H}=S_{p-t}\left(
\int_{s}^{t}S_{t-r}Y_{r}~{\mathnormal{d}}W_{r}^{H}\right)
\end{equation*}

\item For any $0\leq s\leq t\leq p\leq T$, \ by the same convergence
arguments and, similar to (\ref{impbound}), \ we deduce that 
\begin{equation*}
\begin{aligned}
& \left\vert \left\vert \int_{s}^{t}S_{p-r}Y_{r}~{\mathnormal{d}}%
W_{r}^{H}-\int_{s}^{t}S_{t-r}Y_{r}~{\mathnormal{d}}W_{r}^{H}\right\vert
\right\vert _{\mathcal{B}_{\alpha }} =\left\vert \left\vert
\int_{s}^{t}\left( S_{p-t}-I\right) S_{t-r}Y_{r}~{\mathnormal{d}}%
W_{r}^{H}\right\vert \right\vert _{\mathcal{B}_{\alpha }}  \notag \\
&\leq \left( p-t\right) ^{^{\varepsilon }}\left( \Vert Y\Vert _{0,\alpha -%
\frac{1}{2}}+\left\vert \left\vert Y\right\vert \right\vert _{\alpha ,\alpha
-\gamma -\frac{1}{2}}\right) K_{W}^{\gamma }(t-s)^{\gamma -\frac{1}{2}%
-\varepsilon }  \label{bdd14}
\end{aligned}
\end{equation*}%
for any $0<\varepsilon <\gamma -\frac{1}{2}$.
\end{itemize}

Up until not we have only been concerned with the rigurous definition of the
integral $I_{t}$ and its various extensions. Next we analize the properties
of the integral as a process.

\noindent \textbf{Step 2. }$I\in C\left( [0,T],\mathcal{B}_{\alpha }\right) $%
\textbf{. }

To show this we need to control $\left\vert \left\vert
I_{t}-I_{s}\right\vert \right\vert _{\mathcal{B}_{\alpha }}$. First, observe
that for any $0\leq s\leq t$, where $s$ is a dyadic number, we have that%
\begin{equation}
\int_{0}^{t}S_{t-r}Y_{r}~{\mathnormal{d}}W_{r}^{H}=\int_{s}^{t}S_{t-r}Y_{r}~{%
\mathnormal{d}}W_{r}^{H}+\int_{0}^{s}S_{t-r}Y_{r}~{\mathnormal{d}}W_{r}^{H}
\label{adit}
\end{equation}%
Note that all the terms in (\ref{adit}) are well defined by the arguments
in the first step and the identity holds true by the same convergence
method. \ Identity (\ref{adit}) can then be extended to arbitrary
intermediate points $s\in \left( 0,t\right) $, that is $s$ is not necessarily
dyadic, by~taking a sequence $\left( s_{n}\right) $ of dyadic numbers which
converges to $s$ and then observing that 
\begin{eqnarray}
\int_{0}^{t}S_{t-r}Y_{r}~{\mathnormal{d}}W_{r}^{H} &=&\lim_{n\rightarrow
\infty }\int_{s_{n}}^{t}S_{t-r}Y_{r}~{\mathnormal{d}}W_{r}^{H}+\lim_{n%
\rightarrow \infty }\int_{0}^{s_{n}}S_{t-r}Y_{r}~{\mathnormal{d}}W_{r}^{H} 
\notag \\
&=&\int_{s}^{t}S_{t-r}Y_{r}~{\mathnormal{d}}W_{r}^{H}+%
\int_{0}^{s}S_{t-r}Y_{r}~{\mathnormal{d}}W_{r}^{H},  \label{addit2}
\end{eqnarray}%
where the two limits are justified by using the bound (\ref{bddd13}). We
note that we couldn't easily deduce (\ref{addit2}) from the earlier
calculations because it would have been harder to keep track of the
`nuisance' term $R_{t}^{k}$. We are now ready to show the continuity of the
integral in $\mathcal{B}_{\alpha }$-norm. For this observe that, from (\ref%
{addit2}), we deduce that 
\begin{eqnarray*}
\left\vert \left\vert I_{t}-I_{s}\right\vert \right\vert _{\mathcal{B}%
_{\alpha }} &\leq &\left\vert \left\vert \int_{s}^{t}S_{t-r}Y_{r}~{%
\mathnormal{d}}W_{r}^{H}\right\vert \right\vert _{\mathcal{B}_{\alpha
}}+\left\vert \left\vert \left( S_{t-s}-I\right) \left(
\int_{0}^{s}S_{t-r}Y_{r}~{\mathnormal{d}}W_{r}^{H}\right) \right\vert
\right\vert _{\mathcal{B}_{\alpha }} \\
&\leq &C\left( \Vert Y\Vert _{0,\alpha -\frac{1}{2}}+\left\vert \left\vert
Y\right\vert \right\vert _{\alpha ,\alpha -\gamma -\frac{1}{2}}\right)
\left( 1+T^{\gamma -\frac{1}{2}-\varepsilon }\right) K_{W}^{\gamma }\left(
(t-s)^{\gamma -\frac{1}{2}}+(t-s)^{\varepsilon }\right) ,
\end{eqnarray*}%
which gives us the continuity in $\mathcal{B}_{\alpha }$. In fact the
function is $\varepsilon $-H\"{o}lder continuous in $\mathcal{B}_{\alpha }$
for any $\varepsilon \in \left( 0,\gamma -\frac{1}{2}\right) $.

\textbf{Step 3 }$I\in C^{\gamma }\left( [0,T],\mathcal{B}_{\alpha -\gamma
}\right) $\textbf{.}

We move now to the $\gamma $-H\"{o}lder continuity in $\mathcal{B}_{\alpha
-\gamma }$. Again we will use (\ref{addit2}) to deduce that 
\begin{equation}
\left\vert \left\vert I_{t}-I_{s}\right\vert \right\vert _{\mathcal{B}%
_{\alpha -\gamma }}\leq \left\vert \left\vert \int_{s}^{t}S_{t-r}Y_{r}~{%
\mathnormal{d}}W_{r}^{H}\right\vert \right\vert _{\mathcal{B}_{\alpha
-\gamma }}+\left\vert \left\vert \left( S_{t-s}-I\right) \left(
\int_{0}^{s}S_{t-r}Y_{r}~{\mathnormal{d}}W_{r}^{H}\right) \right\vert
\right\vert _{\mathcal{B}_{\alpha -\gamma }}.  \label{int3}
\end{equation}%
Observe that 
\begin{eqnarray*}
\left\vert \left\vert \left( S_{t-s}-I\right) \left(
\int_{0}^{s}S_{t-r}Y_{r}~{\mathnormal{d}}W_{r}^{H}\right) \right\vert
\right\vert _{\mathcal{B}_{\alpha -\gamma }} &\leq &\left( t-s\right)
^{\gamma }\left\vert \left\vert \left( S_{t-s}-I\right) \left(
\int_{0}^{s}S_{t-r}Y_{r}~{\mathnormal{d}}W_{r}^{H}\right) \right\vert
\right\vert _{\mathcal{B}_{\alpha }} \\
&\leq &\left( t-s\right) ^{\gamma }C\left( \Vert Y\Vert _{0,\alpha -\frac{1}{%
2}}+\left\vert \left\vert Y\right\vert \right\vert _{\alpha ,\alpha -\gamma -%
\frac{1}{2}}\right) K_{W}^{\gamma }T^{\gamma -\frac{1}{2}}
\end{eqnarray*}%
which gives the correct control of the second term in (\ref{int3}). It only
remains to control the first term in (\ref{int3}). For this we need to
repeat the same calculations in Step 1, but looking at the norm in $\mathcal{%
B}_{\alpha -\gamma }$ instead of the norm in $\mathcal{B}_{\alpha }$. Just
as before, it is we will do this for $s=0$. Let us look at the first term $%
I_{t}^{k_{0}}$ of the sequence $\left( I_{t}^{k}\right) _{k}$. We have that 
\begin{eqnarray}
\Vert I_{t}^{k_{0}}\Vert _{\mathcal{B}_{\alpha -\gamma }} &=&\Vert
S_{t}Y_{0}(W_{\frac{1}{2^{k_{0}}}}^{H}-W_{0}^{H})\Vert _{\mathcal{B}_{\alpha
-\gamma }}  \notag \\
&\leq &K_{W}^{\gamma }\left( \frac{1}{2^{k_{0}}}\right) ^{\gamma }\Vert
S_{t}Y_{0}\Vert _{\mathcal{B}_{\alpha -\gamma }}  \notag \\
&\leq &CK_{W}^{\gamma }\left( \frac{1}{2^{k_{0}}}\right) ^{\gamma }  \Vert
Y_{0}\Vert _{\mathcal{B}_{\alpha -\frac{1}{2}}}  \notag \\
&\leq &CK_{W}^{\gamma }t^{\gamma }\Vert Y_{0}\Vert _{\mathcal{B}_{\alpha -%
\frac{1}{2}}}.  \label{bdd15}
\end{eqnarray}%

The control (\ref{bdd15}) is enough to deduce (after controlling the
differences $\Vert I_{t}^{k+1}-I_{t}^{k}\Vert _{\mathcal{B}_{\alpha -\gamma
}}$) that $I\in C^{\gamma }\left( [0,T],\mathcal{B}_{\alpha -\gamma }\right) 
$. Moreover, since $W^{H}$ is $(\gamma +\varepsilon )$-H\"{o}lder continuous
for any $\varepsilon <H-\gamma $, we can deduce that 
\begin{equation}
\Vert I_{t}^{k_{0}}\Vert _{\mathcal{B}_{\alpha -\gamma }}\leq CK_{W}^{\gamma
}\Vert Y_{0}\Vert _{\mathcal{B}_{\alpha -\frac{1}{2}}}t^{\gamma +\varepsilon
}\leq CK_{W}^{\gamma }T^{\varepsilon }\Vert Y_{0}\Vert _{\mathcal{B}_{\alpha
-\frac{1}{2}}}t^{\gamma }.  \label{tarziu2}
\end{equation}%

While the control (\ref{tarziu2}) is not explicitly required for the
construction of the Young integral, the fact that it belongs to the space $%
C^{\gamma }\left( [0,T],\mathcal{B}_{\alpha -\gamma }\right) $ will be
used in the construction of the solution of the vorticity equation. 
To
complete the control of in the $\mathcal{B}_{\alpha -\gamma }$-norm, we use
as above, the representation (\ref{representation}) and control all the
terms in the $\mathcal{B}_{\alpha -\gamma }$ norm. We have

\begin{eqnarray}
\left\vert \left\vert R_{t}^{k+1}\right\vert \right\vert _{\mathcal{B}%
_{\alpha -\gamma }} &\leq &CK_{W}^{\gamma }\left\vert \left\vert
Y\right\vert \right\vert _{0,\alpha -\frac{1}{2}}\left( \frac{1}{2^{k+1}}%
\right) ^{\gamma }  \notag \\
&\leq &CK_{W}^{\gamma }\left\vert \left\vert Y\right\vert \right\vert
_{0,\alpha -\frac{1}{2}}\left( \frac{1}{2^{k+1}}\right) ^{\gamma }  \notag \\
&\leq &CK_{W}^{\gamma }\left\vert \left\vert Y\right\vert \right\vert
_{0,\alpha -\frac{1}{2}}\left( \frac{1}{2^{k+1}}\right) ^{\gamma }  \notag \\
&\leq &CK_{W}^{\gamma }\left\vert \left\vert Y\right\vert \right\vert
_{0,\alpha -\frac{1}{2}}\left( \frac{1}{2^{k+1}}\right) ^{\gamma }  \notag \\
&\leq &CK_{W}^{\gamma }\left\vert \left\vert Y\right\vert \right\vert
_{0,\alpha -\frac{1}{2}}t^{\gamma }\left( \frac{1}{2^{k+1-k_{0}}}\right)
^{\gamma }.  \label{bbb1}
\end{eqnarray}%
The control (\ref{bbb1}) is enough to show that $I\in C^{\gamma
}\left( [0,T],\mathcal{B}_{\alpha -\gamma }\right) $. Moreover, since $W^{H}$
is $(\gamma +\varepsilon )$-H\"{o}lder continuous for any $\varepsilon
<H-\gamma $, we can deduce that 
\begin{equation}
\left\vert \left\vert R_{t}^{k+1}\right\vert \right\vert _{\mathcal{B}%
_{\alpha -\gamma }}\leq CK_{W}^{\gamma }\left\vert \left\vert Y\right\vert
\right\vert _{0,\alpha -\frac{1}{2}}t^{\gamma +\varepsilon }\left( \frac{1}{%
2^{k+1-k_{0}}}\right) ^{\gamma +\varepsilon }\leq CK_{W}^{\gamma }\left\vert
\left\vert Y\right\vert \right\vert _{0,\alpha -\frac{1}{2}}T^{\varepsilon
}t^{\gamma }\left( \frac{1}{2^{k+1-k_{0}}}\right) ^{\gamma +\varepsilon }
\label{tarziu3}
\end{equation}%
which is not explicitly required for the construction of the Young integral
and showing that it is in the space $C^{\gamma }\left( [0,T],\mathcal{B}%
_{\alpha -\gamma }\right) $, however it will be used in the construction of
the solution of the vorticity equation. We control next the terms $c_{n}^{k}$
and $b_{n}^{k}$. First observe that 
\begin{eqnarray*}
\Vert c_{n}^{k}\Vert _{\mathcal{B}_{\alpha -\gamma }} &=&\left\Vert S_{t-%
\frac{\left( 2n+1\right) }{2^{k+1}}}\left( Y_{\frac{n}{2^{k}}}-Y_{\frac{%
\left( 2n+1\right) }{2^{k+1}}}\right) \right\Vert _{\mathcal{B}_{\alpha
-\gamma }}K_{W}^{\gamma }\frac{1}{2^{(k+1)\gamma }} \\
&\leq &\left( t-\frac{\left( 2n+1\right) }{2^{k+1}}\right) ^{-\frac{1}{2}%
}\left\Vert Y_{\frac{n}{2^{k}}}-Y_{\frac{\left( 2n+1\right) }{2^{k+1}}%
}\right\Vert _{\alpha -\gamma -\frac{1}{2}}K_{W}^{\gamma }\frac{1}{%
2^{(k+1)\gamma }} \\
&\leq &\left( t-\frac{\left( 2n+1\right) }{2^{k+1}}\right) ^{-\frac{1}{2}%
}K_{W}^{\gamma }\left\vert \left\vert Y\right\vert \right\vert _{\alpha
,\alpha -\gamma -\frac{1}{2}}\frac{1}{2^{(k+1)\gamma }}\frac{1}{%
2^{(k+1)\gamma }} \\
&\leq &\left( t-\frac{\left( 2n+1\right) }{2^{k+1}}\right) ^{\gamma
-1}K_{W}^{\gamma }\left\vert \left\vert Y\right\vert \right\vert _{\alpha
,\alpha -\gamma -\frac{1}{2}}\frac{1}{2^{2\left( k+1\right) \gamma -\left(
k+1\right) \left( \gamma -\frac{1}{2}\right) }} \\
&\leq &K_{W}^{\gamma }\left\vert \left\vert Y\right\vert \right\vert
_{\alpha ,\alpha -\gamma -\frac{1}{2}}\left( t-\frac{\left( 2n+1\right) }{%
2^{k+1}}\right) ^{\gamma -1}\frac{1}{2^{\left( k+1\right) \left( \gamma +%
\frac{1}{2}\right) }} \\
&\leq &K_{W}^{\gamma }\left\vert \left\vert Y\right\vert \right\vert
_{\alpha ,\alpha -\gamma -\frac{1}{2}}\left( t-\frac{\left( 2n+1\right) }{%
2^{k+1}}\right) ^{\gamma -1}\frac{1}{2^{k}}\frac{1}{2^{k\left( \gamma -\frac{%
1}{2}\right) }}\frac{1}{2^{\left( \gamma +\frac{1}{2}\right) }}
\end{eqnarray*}%
where used the fact that 
\begin{equation*}
\frac{\left( t-\frac{\left( 2n+1\right) }{2^{k+1}}\right) ^{\gamma -\frac{1}{%
2}}}{\frac{1}{2^{\left( k+1\right) \left( \gamma -\frac{1}{2}\right) }}}\geq
1.
\end{equation*}%
Summing up over $n$ we obtain%
\begin{eqnarray}
\sum_{\left[ t_{n}^{k},t_{n+1}^{k}\right] \subset \left[ 0,t\right] }\Vert
c_{n}^{k}\Vert _{\mathcal{B}_{\alpha -\gamma }} &\leq &\frac{K_{W}^{\gamma
}\left\vert \left\vert Y\right\vert \right\vert _{\alpha ,\alpha -\gamma -%
\frac{1}{2}}}{2^{\left( \gamma +\frac{1}{2}\right) }}t^{\left( \gamma -\frac{%
1}{2}\right) }\frac{1}{2^{\left( k-k_{0}\right) \left( \gamma -\frac{1}{2}%
\right) }}\int_{0}^{t}\left( t-u\right) ^{\gamma -1}du  \notag \\
&\leq &\frac{K_{W}^{\gamma }\left\vert \left\vert Y\right\vert \right\vert
_{\alpha ,\alpha -\gamma -\frac{1}{2}}}{2^{\left( \gamma +\frac{1}{2}\right)
}}T^{\left( \gamma -\frac{1}{2}\right) }\frac{1}{2^{\left( k-k_{0}\right)
\left( \gamma -\frac{1}{2}\right) }}t^{\gamma }  .\label{bbb3}
\end{eqnarray}%
Now we estimate $\Vert b_{n}^{k}\Vert _{\mathcal{B}_{\alpha -\gamma }}$. We
have, similarly with the computations above: 
\begin{eqnarray*}
\Vert b_{n}^{k}\Vert _{\mathcal{B}_{\alpha -\gamma }} &\leq &\left\Vert
\left( S_{t-\frac{n}{2^{k}}}-S_{t-\frac{2n+1}{2^{k+1}}}\right) Y_{\frac{n}{%
2^{k}}}\right\Vert _{\mathcal{B}_{\alpha -\gamma }}K_{W}^{\gamma }\frac{1}{%
2^{\left( k+1\right) \gamma }} \\
&\leq &K_{W}^{\gamma }\frac{1}{2^{\left( k+1\right) \gamma }}\frac{1}{%
2^{\left( k+1\right) \gamma }}\left\Vert S_{t-\frac{2n+1}{2^{k+1}}}Y_{\frac{n%
}{2^{k}}}\right\Vert _{\mathcal{B}_{\alpha }} \\
&\leq &K_{W}^{\gamma }\frac{1}{2^{2\left( k+1\right) \gamma }}\left( t-\frac{%
\left( 2n+1\right) }{2^{k+1}}\right) ^{-\frac{1}{2}}\left\Vert Y_{\frac{n}{%
2^{k}}}\right\Vert _{\alpha -\frac{1}{2}} \\
&\leq &K_{W}^{\gamma }\left\Vert Y_{\frac{n}{2^{k}}}\right\Vert _{\alpha -%
\frac{1}{2}}\frac{1}{2^{2\left( k+1\right) \gamma -\left( k+1\right) \left(
\gamma -\frac{1}{2}\right) }}\left( t-\frac{\left( 2n+1\right) }{2^{k+1}}%
\right) ^{-\frac{1}{2}}\left( t-\frac{\left( 2n+1\right) }{2^{k+1}}\right)
^{\gamma -\frac{1}{2}}
\end{eqnarray*}%
which gives%
\begin{equation}
\sum_{n=0}^{\left[ t2^{k}\right] -1}\Vert b_{n}^{k}\Vert _{\mathcal{B}%
_{\alpha }}\leq K_{W}^{\gamma }\left\Vert Y\right\Vert _{0,\alpha -\frac{1}{2%
}}T^{\left( \gamma -\frac{1}{2}\right) }\frac{1}{2^{\left( k-k_{0}\right)
\left( \gamma -\frac{1}{2}\right) }}t^{\gamma } . \label{bbb2}
\end{equation}%
It follows that 
\begin{equation*}
\begin{aligned}
\Vert I_{t}^{k}-I_{t}^{k+1}\Vert _{\mathcal{B}_{\alpha -\gamma }} &\leq
\sum_{n=0}^{\left[ t2^{k}\right] -1}\Vert b_{n}^{k}\Vert _{\mathcal{B}%
_{\alpha -\gamma }}+\Vert c_{n}^{k}\Vert _{\mathcal{B}_{\alpha -\gamma
}}+\Vert R_{n}^{k}\Vert _{\mathcal{B}_{\alpha -\gamma }}\leq \\
& \left( \Vert
Y\Vert _{0,\alpha -\frac{1}{2}}+\left\vert \left\vert Y\right\vert
\right\vert _{\alpha ,\alpha -\gamma -\frac{1}{2}}\right) K_{W}^{\gamma
}\left( 1+T^{\left( \gamma -\frac{1}{2}\right) }\right) \frac{1}{2^{\left(
k-k_{0}\right) \left( \gamma -\frac{1}{2}\right) }}t^{\gamma }  \label{bbb4}
\end{aligned}
\end{equation*}
and, therefore, 
\begin{eqnarray}
\left\vert \left\vert \lim_{k\rightarrow \infty }I_{t}^{k}\right\vert
\right\vert _{\mathcal{B}_{\alpha -\gamma }} &\leq &\left\vert \left\vert
I_{t}^{k_{0}}\right\vert \right\vert _{\mathcal{B}_{\alpha -\gamma
}}+\sum_{k=k_{0}}^{\infty }\Vert I_{t}^{k}-I_{t}^{k+1}\Vert _{\mathcal{B}%
_{\alpha -\gamma }}  \notag \\
&\leq &CK_{W}^{\gamma }\Vert Y_{0}\Vert _{\mathcal{B}_{\alpha -\frac{1}{2}%
}}t^{\gamma }+\left( \Vert Y\Vert _{0,\alpha -\frac{1}{2}}+\left\vert
\left\vert Y\right\vert \right\vert _{\alpha ,\alpha -\gamma -\frac{1}{2}%
}\right) K_{W}^{\gamma }\left( 1+T^{\left( \gamma -\frac{1}{2}\right)
}\right) t^{\gamma }\sum_{k\geq k_{0}}^{\infty }\left( \frac{1}{2^{\gamma -%
\frac{1}{2}-\epsilon }}\right) ^{k-k_{0}}  \notag \\
&\leq &C_{T}(W^{H})\left( \Vert Y\Vert _{0,\alpha -\frac{1}{2}}+\left\vert
\left\vert Y\right\vert \right\vert _{\alpha ,\alpha -\gamma -\frac{1}{2}%
}\right) t^{\gamma },  \label{holder}
\end{eqnarray}
where $C_{T}(W^{H})=CK_{W}^{\gamma }(1+T^{\left( \gamma -\frac{1}{2}\right)
})$. This concludes the proof of the last step of the theorem.

\vspace{3mm}
\begin{remark}\label{rem:sewing} $\left. \right. $\\
1. As announced, we can immediately apply Theorem \ref{s} to the
process $Y=\xi \cdot \nabla \omega $, which satisfies the required
regularity assumptions whenever $\omega \in C\left( [0,T],\mathcal{B}%
_{\alpha }\right) \cap C^{\gamma }([0,T],\mathcal{B}_{\alpha -\gamma })$.\
The integral 
\begin{equation}
I_{t}\left( \omega \right) =\int_{0}^{t}S_{t-r}\left( \xi \cdot \nabla
\omega \right) ~{\mathnormal{d}}W_{r}^{H}  \label{Itomega}
\end{equation}%
is well defined. Moreover there exist $C_{T}(\xi ,W^{H})$ such that 
\begin{equation}
\left\vert \left\vert I\left( \omega \right) \right\vert \right\vert
_{V^{T}}\leq C_{T}^{1}(\xi ,W^{H})\left\vert \left\vert \omega \right\vert
\right\vert _{V^{T}}.  \label{Itomega2}
\end{equation}%
More generally, following the same arguments, the integral 
\begin{equation}\label{Itomega4}
I_{t}\left( \omega \right) =\int_{0}^{t}S_{t-r}\left( \mathcal{F}\left(
\omega \right) \right) ~{\mathnormal{d}}W_{r}^{H}  
\end{equation}%
is well defined, for any (possibly) nonlinear operator $\mathcal{F}$ satisfying Assumption \ref{ass:f}.

\vspace{2mm}
2. The integral defined in (\ref{Itomega}) is Lipschitz on the
space $V^{T}$. To see this apply Theorem~\ref{s} to the process $Y^{12}:=\xi
\cdot \nabla \left( \omega ^{1}-\omega ^{2}\right) $ for any two processes $%
\omega ^{1},\omega ^{2}\in V^{T}$ and deduce that 
\begin{equation}
\left\vert \left\vert I\left( \omega ^{1}\right) -I\left( \omega ^{2}\right)
\right\vert \right\vert _{V_{T}}\leq C_{T}^{1}(\xi ,W^{H})\left\vert
\left\vert \omega ^{1}-\omega ^{2}\right\vert \right\vert _{V^{T}}.
\label{Itoomega3}
\end{equation}%
The same holds true, more generally, for the integral (\ref{Itomega4})
provided 
\begin{equation}\label{cond:f}
\left\vert \left\vert \mathcal{F\omega }^{1}\mathcal{-F\omega }%
^{2}\right\vert \right\vert _{V^T}\leq
C\left\vert \left\vert \mathcal{\omega }^{1}\mathcal{-\omega }%
^{2}\right\vert \right\vert _{V^{T}}
\end{equation}%
for some $C>0$. This condition holds under the assumption that $\mathcal{F}$ is two times continuously Fr\'echet differentiable with bounded derivatives, as imposed in Assumption \ref{ass:f}.
\vspace{2mm}

3. In the next section we will be looking to find a ball $%
B_{R}(T)\subset V^{T}$ such that for any $\omega \in $ $B_{R}(T),~\
\left\vert \left\vert I\left( \omega \right) \right\vert \right\vert
_{V^{T}}\leq \frac{R}{3}$. This is possible provided the constant $%
C_{T}^{1}(\xi ,W^{H})$ appearing in (\ref{Itomega2}) can be chosen to be
less than $\frac{1}{3}$.\ For this it is sufficient to have a constant that
vanishes as $T$ decreases to $0$. As seen from the arguments in Theorem \ref{s} this holds true, but one has to exploit the additional H\"{o}lder
continuity (beyond $\gamma $) that $W^{H}$ has. In particular following from
the same arguments one can deduce that (\ref{Itomega2}) holds with 
\begin{equation}
C_{T}^{1}(\xi ,W^{H})=C\left\vert \left\vert \xi \right\vert \right\vert _{%
\mathcal{B}_{\infty}}(K_{W}^{\gamma +\varepsilon }T^{\varepsilon
}+K_{W}^{\gamma }T^{\left( \gamma -\theta \right) })  \label{Itoomega5}
\end{equation}%
where $C$ is a universal constant, $\varepsilon \in (0,H-\gamma )$, $%
K_{W}^{\gamma +\varepsilon }\,$is the $\left( \gamma +\varepsilon \right) -$H\"{o}%
lder constant of $W^{H}$ on the interval $\left[ 0,T\right] $ and $%
K_{W}^{\gamma }\,$is the $\gamma -$H\"{o}lder constant of $W^{H}$ on the
interval $\left[ 0,T\right] $. Of course (\ref{Itoomega5}), enables us to
conclude that there exists $T=T\left( R\right) $ such that for any $T\leq $ $%
T\left( R\right) $, $C_{T}^{1}(\xi ,W^{H})\leq \frac{1}{3}$ and therefore 
\begin{equation}
\left\vert \left\vert I\left( \omega \right) \right\vert \right\vert
_{V^{T}}\leq \frac{R}{3}  \label{itoomega7}
\end{equation}%
for any $\omega \in $ $B_{R}(T)$. Moreover we will be looking to show that $%
I $ has contractive properties. More precisely we will want that for any $%
\omega ^{1},\omega ^{2}\in B_{R}(T)\subset V^{T},\left\vert \left\vert
I\left( \omega ^{1}\right) -I\left( \omega ^{2}\right) \right\vert
\right\vert _{V^{T}}$ can be controlled in terms of $\left\vert \left\vert
\omega ^{1}-\omega ^{2}\right\vert \right\vert _{V^{T}}$. For the same $%
C_{T}^{1}(\xi ,W^{H})$ as in (\ref{Itoomega5}) and applying (\ref{Itoomega3}%
) we deduce that \ 
\begin{equation}
\left\vert \left\vert I\left( \omega ^{1}\right) -I\left( \omega ^{2}\right)
\right\vert \right\vert _{V^{T}}\leq \frac{1}{3}\left\vert \left\vert \omega
^{1}-\omega ^{2}\right\vert \right\vert _{V^{T}}.  \label{itoomega6}
\end{equation}
\end{remark}

\section{Stochastic vorticity equation}\label{sect:vorticityeqn}
We proceed now with the analysis of the two-dimensional incompressible vorticity equation perturbed by transport-type fractional Brownian noise
\begin{equation*}
d\omega_t+u_{t}\cdot \nabla \omega_tdt+\displaystyle
\mathcal{L}_{\xi}\omega_t dW_{t}^{H}=\Delta\omega_tdt
\end{equation*}
with initial condition $\omega _{0} \in \mathcal{B}_{\alpha}, \alpha > d/2$, where $u$ is the velocity of the incompressible fluid (so $\nabla \cdot u = 0$), $\omega_t=curl\ u_t$ is the corresponding fluid vorticity, $\xi$ is a time-independent divergence-free vector field, the operator $\mathcal{L}$ is given by $\mathcal{L}_{\xi}\omega_t := \xi \cdot \nabla \omega_t$, and $W^H$ is a fractional Brownian motion with Hurst parameter $H >1/2$.

\subsection{Fixed point argument}\label{sect:fixedpoint}
As before, for \[V^T=C([0,T],\cB_\alpha)\cap C^\gamma([0,T],\cB_{\alpha-\gamma})\] we consider the map 
\begin{equation}
\Lambda : V^T \rightarrow V^T
\end{equation}
and show that for $T$ sufficiently small, there exists $\omega$ such that
\begin{equation}
\Lambda (\omega) = \omega,
\end{equation}
where
\begin{equation}\label{eq:mainmild}
\Lambda(\omega) = S_t\omega_0 - \displaystyle\int_0^t S_{t-s}(u_s\cdot \nabla \omega_s) ds - \displaystyle\int_0^t S_{t-s}(\xi \cdot \nabla\omega_s) dW_s^H.
\end{equation}

\begin{theorem}\label{thm:mild}
Let $\xi\in \cB_\infty$.~The equation \eqref{eq:mainmild} has a unique mild solution $\omega\in V^T$ for $T$ sufficiently small. 
\end{theorem}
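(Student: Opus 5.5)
We apply the Banach fixed point theorem to $\Lambda$ on a closed ball $B_R(T)=\{\omega\in V^T:\ \|\omega\|_{V^T}\le R\}$. We fix $\gamma\in(\tfrac12,H)$ and $\alpha>1+\tfrac d2=2$, so that $\cB_{\alpha-\gamma}=H^{2\alpha-2\gamma}$ still controls gradients. We split $\Lambda(\omega)=S_\cdot\omega_0-J(\omega)-I(\omega)$, where
\[
J_t(\omega)=\int_0^tS_{t-s}(u_s\cdot\nabla\omega_s)\,ds
\]
and $I(\omega)$ is the Young integral \eqref{Itomega}. Following Remark \ref{rem:sewing}(3), we aim to show three things for $T$ small. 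First, $\|S_\cdot\omega_0\|_{V^T}\le R/3$. Second, $\|J(\omega)\|_{V^T}\le R/3$ and $J$ is a $\tfrac13$-contraction on $B_R(T)$. Third, the bounds \eqref{itoomega7} and \eqref{itoomega6} hold. Together these give $\Lambda:B_R(T)\to B_R(T)$ with Lipschitz constant below $1$.

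\textbf{Initial datum.} By \eqref{semigroup}, $\sup_t\|S_t\omega_0\|_\alpha\lesssim\|\omega_0\|_\alpha$. By \eqref{hg:1} with $\sigma=\gamma$, $\|(S_{t-s}-I)S_s\omega_0\|_{\alpha-\gamma}\lesssim (t-s)^\gamma\|\omega_0\|_\alpha$. Hence $\|S_\cdot\omega_0\|_{V^T}\le C_0\|\omega_0\|_\alpha$, and we choose $R=3C_0\|\omega_0\|_\alpha$. Since this term does not shrink with $T$, $R$ has to be fixed first and then $T=T(R)$ chosen afterwards.

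\textbf{Nonlinear drift.} The velocity is given by Biot--Savart, $u=\nabla^\perp(-\Delta)^{-1}\omega$ on mean-zero functions, so $\|u\|_{H^{2\alpha+1}}\lesssim\|\omega\|_\alpha$. The corollary to Lemma \ref{sobolev} then gives
\[
\|u\cdot\nabla\omega\|_{\alpha-1/2}\lesssim\|u\|_{H^{2\alpha-1}}\|\omega\|_{H^{2\alpha}}\lesssim\|\omega\|_\alpha^2 ,
\]
using $2\alpha-1>d/2$. Bilinearity gives the difference estimate $\lesssim(\|\omega^1\|_\alpha+\|\omega^2\|_\alpha)\|\omega^1-\omega^2\|_\alpha$. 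This is Assumption \ref{ass:d} with $\beta=\tfrac12$, $q=2$, $p=1$. Then \eqref{hg:2} gives
\[
\|J_t\|_\alpha\lesssim\int_0^t(t-s)^{-1/2}\|\omega_s\|_\alpha^2\,ds\lesssim T^{1/2}R^2 .
\]
For the Hölder part we write
\[
J_t-J_s=\int_s^tS_{t-r}(\cdot)\,dr+(S_{t-s}-I)J_s .
\]
The first piece we estimate in $\cB_{\alpha-\gamma}$ using the smoothing $(t-r)^{-(1/2-\gamma)_+}$, which is in fact a gain since $\gamma>\tfrac12$; it is bounded by $(t-s)R^2\le T^{1-\gamma}(t-s)^\gamma R^2$. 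The second piece is bounded by $(t-s)^\gamma\|J_s\|_\alpha\lesssim(t-s)^\gamma T^{1/2}R^2$. Thus $\|J(\omega)\|_{V^T}\lesssim(T^{1/2}+T^{1-\gamma})R^2$, and the same computation applied to differences gives $\|J(\omega^1)-J(\omega^2)\|_{V^T}\lesssim(T^{1/2}+T^{1-\gamma})R\,\|\omega^1-\omega^2\|_{V^T}$. Both are made $\le R/3$, resp.\ $\le\tfrac13\|\omega^1-\omega^2\|_{V^T}$, by shrinking $T$.

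\textbf{Noise term.} Theorem \ref{s} and Remark \ref{rem:sewing} apply to $Y=\xi\cdot\nabla\omega$. The required regularity $Y\in C(\cB_{\alpha-1/2})\cap C^\gamma(\cB_{\alpha-\gamma-1/2})$ follows from $\xi\in\cB_\infty$ and $\omega\in V^T$, because $\xi\cdot\nabla$ maps $H^{s}\to H^{s-1}$, i.e.\ $\cB_a\to\cB_{a-1/2}$. The Lipschitz property of $I$ is automatic since $I$ is linear in $\omega$.

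\textbf{Main obstacle and conclusion.} The main difficulty is getting a constant $C_T^1(\xi,W^H)$ that vanishes as $T\to0$, because the bound \eqref{holder} is only of order $t^\gamma$ times a $T$-independent constant. As in \eqref{Itoomega5}, we would redo the estimates of Theorem \ref{s} using the $(\gamma+\varepsilon)$-Hölder constant of $W^H$, which is a.s.\ finite for $\gamma+\varepsilon<H$, via \eqref{tarziu2}--\eqref{tarziu3}. This extracts a factor $T^\varepsilon$ from every term, including the $t^{\gamma-1/2}$ bound for the $\cB_\alpha$ norm. With that in hand, $\Lambda$ is a contraction of $B_R(T)$ for the pathwise (random) small $T$, and Banach's theorem gives a unique fixed point in $B_R(T)$.

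Uniqueness in all of $V^T$ follows by a standard argument. Given two solutions, let $R'$ be larger than both of their norms and rerun the contraction on a possibly smaller interval. Iterating from the new initial point, and noting that the contraction time depends only on the size of the data, shows that the two solutions coincide on $[0,T]$.
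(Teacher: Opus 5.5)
Your proposal is correct and follows essentially the same route as the paper: Banach's fixed point theorem for $\Lambda$ on a ball of $V^T$. The drift is handled through the $\cB_{\alpha-1/2}$ bound on $u\cdot\nabla\omega$ plus semigroup smoothing, and the transport-noise convolution through Theorem~\ref{s} with the $T$-vanishing constant of Remark~\ref{rem:sewing}(3), obtained from the $(\gamma+\varepsilon)$-H\"older constant of $W^H$. Your variations are harmless refinements: the ball is centred at $0$ with $R$ fixed by $\|\omega_0\|_\alpha$ rather than at $S_\cdot\omega_0$, and the restart argument gives uniqueness in all of $V^T$ rather than only in the ball; your restriction $\alpha>2$ is also unnecessary, since the argument only uses $2\alpha-1>d/2$, i.e.\ the paper's $\alpha>d/2$.
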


\begin{proof}
In order to apply Banach's fixed point argument, we have to show that $\Lambda : V^T \rightarrow V^T$ is a contraction by choosing $T$ small enough. We introduce the approximating sequence $(\omega^n)_{n\geq 1}$ given by

\begin{equation}
\begin{aligned}
&\omega_0 = x \in V^T\\
& \omega_t^n = S_t\omega_0 - \displaystyle\int_0^tS_{t-s}u_s^{n-1}\cdot \nabla\omega_s^{n-1}ds - \displaystyle\int_0^t S_{t-s}\xi \cdot \nabla\omega_s^{n-1}dW_s^H
\end{aligned}
\end{equation}
that is
\begin{equation}
\omega^n = \Lambda (\omega^{n-1})
\end{equation}
i.e the link between the integral form and the map $\Lambda$ is given by
\begin{equation}\label{lambdaeq}
(\Lambda(\omega^{n-1}))_t=S_t\omega_0 - \displaystyle\int_0^tS_{t-s}u_s^{n-1}\cdot \nabla\omega_s^{n-1}ds - \displaystyle\int_0^t S_{t-s}\xi \cdot \nabla\omega_s^{n-1}dW_s^H.
\end{equation}
Note that $\omega^0 \in \mathcal{B}_{\alpha}$ exists by hypothesis and then $u^0 := K\omega^0$ is well-defined due to the properties of the Biot-Savart kernel $K$. Then by induction $\omega^1 \in \mathcal{B}_{\alpha}$ is well-defined and so on.
Let
\begin{equation}
B_R (T):= \left\{ y\in V^T, y_0=\{t\rightarrow S_t\omega_0\}, \quad \hbox{and} \quad \|y-y_0\|_{V^T}\leq R   \right\}.
\end{equation}
Note that, from the properties of the semigroup $S_t$, the centre of the ball $y_0=\{t\rightarrow S_t\omega_0\}$ indeed belongs to $V^T$.
We show that, for $T$ sufficiently small,
\begin{equation}
  \Lambda: B_R(T) \rightarrow B_R(T)  
\end{equation}
and there exists $L < 1$ such that
\begin{equation}
\|\Lambda(\omega^{n,1}) - \Lambda(\omega^{n,2}) \|_{V^T} \leq L \|\omega^{n,1} - \omega^{n,2}\|_{V^T}, \quad  \hbox{where} \quad \omega^{n,1}, \omega^{n,2} \in B_R(T)
\end{equation}
i.e. that
\begin{eqnarray}
\left\|\Lambda\left(\omega^{n,1}\right)-\Lambda\left(\omega^{n,2}\right)\right\|_{\left.C(0, T] ; \mathcal{B}_a\right)}+\left\|\Lambda\left(\omega^{n,1}\right)-\Lambda\left(\omega^{n,2}\right)\right\|_{C^{\gamma}\left([0, T] ; \mathcal{B}_{a-\gamma}\right)} \leq L\left\|\omega^{n,1}-\omega^{n,2}\right\|_{V^T}.
\end{eqnarray}
This is shown using estimates on the terms that appear in the integral form of the equation \eqref{lambdaeq}, using properties of the semigroup, see Proposition \ref{prop:contraction} below.
Then there exists $\omega^{*}$ such that 
\begin{equation}
\omega^{*} = \displaystyle\lim_{n\rightarrow\infty}\omega^n
\end{equation}
and
\begin{equation}
\Lambda(\omega^{*}) = \omega^{*}.
\end{equation}
That is, $\omega^{*}$ is a solution for \eqref{eq:mainmild}. 

As usual, uniqueness follows from the contraction property of the mapping used in the fixed point argument. The contraction ensures that the fixed point is unique in the chosen functional space.
\end{proof}

\begin{proposition}\label{prop:contraction}
The map $\Lambda$ is a contraction on $V^T$ choosing $T$ small enough.
That is \begin{equation}\label{lambdaball}
  \Lambda: B_R(T) \rightarrow B_R(T)  
\end{equation}
and there exists $L < 1$ such that
\begin{equation}
\|\Lambda(\omega^{n,1}) - \Lambda(\omega^{n,2}) \|_{V^T} \leq L \|\omega^{n,1} - \omega^{n,2}\|_{V^T}, \quad  \hbox{where} \quad \omega^{n,1}, \omega^{n,2} \in B_R(T).
\end{equation}
i.e.
\begin{eqnarray}
\left\|\Lambda\left(\omega^{n,1}\right)-\Lambda\left(\omega^{n,2}\right)\right\|_{\left.C(0, T] ; \mathcal{B}_a\right)}+\left\|\Lambda\left(\omega^{n,1}\right)-\Lambda\left(\omega^{n,2}\right)\right\|_{C^{\gamma}\left([0, T] ; \mathcal{B}_{a-\gamma}\right)} \leq L\left\|\omega^{n,1}-\omega^{n,2}\right\|_{V^T}.
\end{eqnarray}
\end{proposition}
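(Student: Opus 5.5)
We split $\Lambda(\omega)-y_0$ into the drift convolution
\[
J_t(\omega):=\int_0^t S_{t-s}(u_s\cdot\nabla\omega_s)\,ds
\]
and the Young convolution $I_t(\omega)$ from \eqref{Itomega}. Since $\Lambda(\omega)-y_0=-J(\omega)-I(\omega)$, it suffices to show two things for $\omega,\omega^1,\omega^2\in B_R(T)$ and $T$ small. First, $\|J(\omega)\|_{V^T}+\|I(\omega)\|_{V^T}\le R$. Second, $\|J(\omega^1)-J(\omega^2)\|_{V^T}+\|I(\omega^1)-I(\omega^2)\|_{V^T}\le L\|\omega^1-\omega^2\|_{V^T}$ with $L<1$.

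The noise part is already handled by Remark~\ref{rem:sewing}. By \eqref{Itomega2} and \eqref{Itoomega5}, $\|I(\omega)\|_{V^T}\le C^1_T(\xi,W^H)\|\omega\|_{V^T}$, where $C^1_T\to0$ as $T\to0$. On the ball, $\|\omega\|_{V^T}\le R+\|y_0\|_{V^T}$, and $\|y_0\|_{V^T}\lesssim\|\omega_0\|_{\alpha}$ by \eqref{semigroup}--\eqref{hg:1}. Hence for $T$ small we get $\|I(\omega)\|_{V^T}\le R/3$, and \eqref{itoomega6} gives the Lipschitz constant $1/3$. I would also note that $Y=\xi\cdot\nabla\omega$ lies in $C([0,T],\cB_{\alpha-1/2})\cap C^\gamma([0,T],\cB_{\alpha-\gamma-1/2})$, because $\xi\in\cB_\infty$ and $\nabla$ costs one Sobolev derivative, i.e.\ $1/2$ on the $\cB$-scale. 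This is exactly what Theorem~\ref{s} requires.

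For the drift, I would use Lemma~\ref{sobolev} with $2\alpha>d/2=1$. The Biot--Savart law gives $\|u\|_{H^{2\alpha+1}}\lesssim\|\omega\|_{H^{2\alpha}}$. The product estimate, with $f=u\in H^{2\alpha}$ and $g=\omega\in H^{2\alpha}$ so that $\nabla g\in H^{2\alpha-1}$, then gives
\[
\|u\cdot\nabla\omega\|_{\alpha-1/2}\lesssim\|\omega\|_\alpha^2 .
\]
Bilinearity gives
\[
\|u^1\cdot\nabla\omega^1-u^2\cdot\nabla\omega^2\|_{\alpha-1/2}\lesssim(\|\omega^1\|_\alpha+\|\omega^2\|_\alpha)\|\omega^1-\omega^2\|_\alpha .
\]
This is Assumption~\ref{ass:d} with $\beta=1/2$, $q=2$ and $p=1$.

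With $F_s=u_s\cdot\nabla\omega_s$, the sup-norm bound follows from \eqref{hg:2}:
\[
\|J_t\|_\alpha\lesssim\int_0^t(t-s)^{-1/2}\|F_s\|_{\alpha-1/2}\,ds\lesssim T^{1/2}\|\omega\|_{0,\alpha}^2 .
\]
For the Hölder part, write
\[
J_t-J_s=\int_s^tS_{t-r}F_r\,dr+(S_{t-s}-I)J_s .
\]
For the first term, \eqref{hg:2} in $\cB_{\alpha-\gamma}$ gives the bound $\lesssim(t-s)^{1/2+\gamma}\sup\|F\|_{\alpha-1/2}\le T^{1/2}(t-s)^\gamma\sup\|F\|_{\alpha-1/2}$. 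For the second term, \eqref{hg:1} with $\sigma=\gamma$ gives $\lesssim(t-s)^\gamma\|J_s\|_\alpha$, and $\|J_s\|_\alpha$ carries the factor $T^{1/2}$ from the previous estimate. Thus $\|J(\omega)\|_{V^T}\lesssim T^{1/2}(R+\|\omega_0\|_\alpha)^2$. Repeating the same computation on $F^1-F^2$ gives the Lipschitz constant $\lesssim T^{1/2}(R+\|\omega_0\|_\alpha)$.

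To conclude, fix any $R>0$, for instance $R=1$, and choose $T$ so small that the drift constants are at most $1/3$ of the required quantities. Then $\|\Lambda(\omega)-y_0\|_{V^T}\le 2R/3\le R$ and $L\le 2/3<1$.

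The main obstacle is the noise term: one needs a constant that genuinely vanishes as $T\to0$. The crude bound \eqref{holder} only produces a factor $t^\gamma$ relative to the $\gamma$-Hölder seminorm, which does not decay. The remedy, as indicated in Remark~\ref{rem:sewing}(3), is to rerun Steps 1--3 of Theorem~\ref{s} using the $(\gamma+\varepsilon)$-Hölder constant of $W^H$ with $\varepsilon\in(0,H-\gamma)$, as in \eqref{tarziu2} and \eqref{tarziu3}. This extracts $T^{\varepsilon}$ in the $C^\gamma$-part and $T^{\gamma-1/2}$ in the sup-part. I would take \eqref{Itoomega5} from that remark as given.
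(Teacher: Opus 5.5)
Your proposal is correct and follows essentially the same route as the paper. It uses the same splitting into the drift convolution (Biot--Savart plus product estimate into $\mathcal{B}_{\alpha-1/2}$, semigroup smoothing, and the decomposition $J_t-J_s=\int_s^t S_{t-r}F_r\,dr+(S_{t-s}-I)J_s$) and the Young convolution controlled via Remark~\ref{rem:sewing}, including the need for the extra $(\gamma+\varepsilon)$-H\"older regularity of $W^H$ to obtain a constant that vanishes as $T\to0$.

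There is one small slip, which the paper shares. Since $\alpha-\gamma<\alpha-1/2$, there is no factor $(t-r)^{\gamma-1/2}$ in $\|S_{t-r}F_r\|_{\alpha-\gamma}$, so the first H\"older term is only $\lesssim (t-s)\sup_r\|F_r\|_{\alpha-1/2}\le T^{1-\gamma}(t-s)^\gamma\sup_r\|F_r\|_{\alpha-1/2}$. This still vanishes as $T\to0$, so your conclusion stands with $T^{1/2}$ replaced by $T^{1-\gamma}$.
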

\begin{proof}
The equation for the difference is given by
\begin{eqnarray}
\Lambda\left(\omega^{n, 1}\right)-\Lambda\left(\omega^{n, 2}\right) &=& -\int_0^t S_{t-s}\left(u_s^{n-1,1} \cdot \nabla \omega_s^{n-1,1}-u_s^{n-1,2} \cdot \nabla \omega_s^{n-1,2}\right) ds \\ 
& - & \int_0^t S_{t-s}\left(\xi \cdot \nabla (\omega_s^{n-1,1} - \omega_s^{n-1,2} \right) dW_s^H.
\end{eqnarray}
For the estimates corresponding to the space $C([0,T], \mathcal{B}_{\alpha})$ see below. For the H\"older space estimates, see the next two subsections. 

\noindent\textbf{Step 1: the nonlinear term.} Let us denote:
\begin{eqnarray}
A_t:=\int_0^t S_{t-s}\left(u_s^{n-1,1} \cdot \nabla \omega_s^{n-1,1}-u_s^{n-1,2} \cdot \nabla \omega_s^{n-1,2}\right) d s.
\end{eqnarray}
We can write
\begin{eqnarray}
u^{n-1,1} \cdot \nabla \omega^{n-1,1}-u^{n-1,2} \cdot \nabla \omega^{n-1,2}=\left(u^{n-1,1}-u^{n-1,2}\right) \cdot \nabla \omega^{n-1,1}+u^{n-1,2} \cdot \nabla\left(\omega^{n-1,1}-\omega^{n-1,2}\right).
\end{eqnarray}

We estimate each term separately in the \( \mathcal{B}_{\alpha - \delta} \) norm (for some small \( \delta > 0 \)), and then apply the semigroup smoothing.
Let \( \omega^{n-1,i} \in V^T \), i.e. both \( \omega^{n-1,1} \) and \( \omega^{n-1,2} \) belong to
\[
C([0, T]; \mathcal{B}_\alpha) \cap C^\gamma([0, T]; \mathcal{B}_{\alpha - \gamma}).
\]
Since \( u^{n-1,i} = K \ast \omega^{n-1,i} \), due to the properties of the Biot–Savart kernel \( K \), we have
\[
\|u^{n-1,1}_t - u^{n-1,2}_t\|_{\mathcal{B}_\alpha} \leq C_1 \|\omega^{n-1,1}_t - \omega^{n-1,2}_t\|_{\mathcal{B}_\alpha}.
\]

Then we can write
\[
\left\| \left(u^{n-1,1} - u^{n-1,2} \right) \cdot \nabla \omega^{n-1,1} \right\|_{\mathcal{B}_{\alpha - \delta}} 
\leq C_2 \|u^{n-1,1} - u^{n-1,2} \|_{\mathcal{B}_\alpha} \cdot \|\omega^{n-1,1} \|_{\mathcal{B}_\alpha}
\leq C_3 \|\omega^{n-1,1} - \omega^{n-1,2} \|_{\mathcal{B}_\alpha} \cdot \|\omega^{n-1,1} \|_{\mathcal{B}_\alpha}.
\]
for $\delta\ge\frac{1}{2}$. 
Likewise, 
\[
\left\| u^{n-1,2} \cdot \nabla \left(\omega^{n-1,1} - \omega^{n-1,2} \right) \right\|_{\mathcal{B}_{\alpha - \delta}} 
\leq C_4 \|u^{n-1,2} \|_{\mathcal{B}_\alpha} \cdot \|\omega^{n-1,1} - \omega^{n-1,2} \|_{\mathcal{B}_\alpha}.
\]
also for $\delta\ge\frac{1}{2}$.
Overall, the difference of nonlinearities satisfies:
\[
\| u^{n-1,1} \cdot \nabla \omega^{n-1,1} - u^{n-1,2} \cdot \nabla \omega^{n-1,2} \|_{\mathcal{B}_{\alpha - \delta}} \leq C(R) \cdot \| \omega^{n-1,1} - \omega^{n-1,2} \|_{\mathcal{B}_\alpha},
\]
since \( \|\omega^{n-1,i}\|\in B_R(T) \).
Now we can use the smoothing property of the semigroup:
\[
\|S_{t-s} f \|_{\mathcal{B}_\alpha} \leq C_5 (t - s)^{-\delta} \|f\|_{\mathcal{B}_{\alpha - \delta}}.
\]
So we have 
\[
\|A_t\|_{\mathcal{B}_\alpha}
= \left\| \int_0^t S_{t-s} \left( u_s^{n-1,1} \cdot \nabla \omega_s^{n-1,1} - u_s^{n-1,2} \cdot \nabla \omega_s^{n-1,2} \right) ds \right\|_{\mathcal{B}_\alpha}
\lesssim C(R) \int_0^t (t - s)^{-\delta} \|\omega_s^{n-1,1} - \omega_s^{n-1,2} \|_{\mathcal{B}_{\alpha}} ds.
\]
for $\delta\ge 1$
From here, using that \( \omega^{n-1,1} - \omega^{n-1,2} \in C([0,T]; \mathcal{B}_\alpha) \), we conclude:
\[
\|A\|_{C([0,T]; \mathcal{B}_\alpha)} \leq C_T \|\omega^{n-1,1} - \omega^{n-1,2} \|_{C([0,T]; \mathcal{B}_\alpha)},
\]
where \( C_T \sim T^{1 - \delta} \) and \( C_T \to 0 \) as \( T \to 0 \).
For the Hölder part, similar semigroup estimates yield, see next subsection,
\[
\| A_t - A_s \|_{\mathcal{B}_{\alpha - \gamma}} \leq C_6(T) |t - s|^\gamma \|\omega^{n-1,1} - \omega^{n-1,2} \|_{V^T},
\]
by similar arguments.
Hence:
\[
\| A \|_{C^\gamma([0,T]; \mathcal{B}_{\alpha - \gamma})} \leq C_T \|\omega^{n-1,1} - \omega^{n-1,2} \|_{V^T}.
\]
Altogether,
\[
\|A\|_{V^T} = \|A\|_{C([0,T]; \mathcal{B}_\alpha)} + \|A\|_{C^\gamma([0,T]; \mathcal{B}_{\alpha - \gamma})}
\leq C_T \|\omega^{n-1,1} - \omega^{n-1,2} \|_{V^T}.
\]
Choosing \( T > 0 \) small enough, we get \( C_T < 1 \), and this term becomes a contraction.

\noindent\textbf{Step 2: The Young integral.}
Let
\begin{eqnarray}
B_t:=\int_0^t S_{t-s} \xi \cdot \nabla\left(\omega_s^{n-1,1}-\omega_s^{n-1,2}\right) d W_s^H
\end{eqnarray}
and
\begin{eqnarray}
g_s:=S_{t-s} \xi \cdot \nabla\left(\omega_s^{n-1,1}-\omega_s^{n-1,2}\right).
\end{eqnarray}
We want to show that
\begin{eqnarray}\label{eq:estimb}
\|B\|_{C\left([0, T] ; \mathcal{B}_\alpha\right)}+\|B\|_{C^\gamma\left([0, T] ; \mathcal{B}_{\alpha-\gamma}\right)} \leq C(T)\left\|\omega^{n-1,1}-\omega^{n-1,2}\right\|_{V^T}.
\end{eqnarray}
For $\xi \in \mathcal{B}_{\infty}$ we can write
\begin{eqnarray}
\left\|\xi \cdot \nabla\left(\omega^{n-1,1}-\omega^{n-1,2}\right)\right\|_{\mathcal{B}_{\alpha-1/2}}\leq C_7\left\|\omega^{n-1,1}-\omega^{n-1,2}\right\|_{\mathcal{B}_\alpha} 
\end{eqnarray}
and by the same arguments as above
\begin{eqnarray}
\left\|S_{t-s}\left(\xi \cdot \nabla\left(\omega^{n-1,1}-\omega^{n-1,2}\right)\right)\right\|_{\mathcal{B}_\alpha} \leq \frac{C_8}{(t-s)^{1/2}}\left\|\omega^{n-1,1}-\omega^{n-1,2}\right\|_{\mathcal{B}_\alpha} .
\end{eqnarray}
Likewise, using Theorem~\ref{s} (see also Corollary \ref{i:reg}), for $s_1, s_2 >0$, we have:

\begin{eqnarray}
\left\|g_{s_1}-g_{s_2}\right\|_{\mathcal{B}_{\alpha-\gamma}} \lesssim\left|s_1-s_2\right|^{\gamma-1/2}\left\|\omega^{n-1,1}-\omega^{n-1,2}\right\|_{V^T}.
\end{eqnarray}
This proves \eqref{eq:estimb}.
Overall, we have shown that $\Lambda$ is a contraction on $V^T$. 
\end{proof}

\subsubsection{Properties of the nonlinear term}\label{sect:nonlinear}

We show that 
\begin{equation*}
t\rightarrow \int_{0}^{t}S_{t-s}u_{s}^{n-1}\cdot \nabla \omega
_{s}^{n-1}ds\quad \in V^T
\end{equation*}%
when $\omega ^{n-1}\in V^T.$ We use the fact that 
\begin{equation*}
\left\vert \left\vert u_{s}^{n-1}\cdot \nabla \omega _{s}^{n-1}\right\vert
\right\vert _{\alpha -\frac{1}{2}}\leq C\left\vert \left\vert \omega
_{s}^{n-1}\right\vert \right\vert _{\alpha }^{2}
\end{equation*}%
by the properties of the Biot-Savart law (see e.g. \cite{CL1}). 
So on a ball of radius $R$ in $V$\ then 
\begin{equation*}
\left\vert \left\vert u_{s}^{n-1}\cdot \nabla \omega _{s}^{n-1}\right\vert
\right\vert _{\alpha -\frac{1}{2}}\leq CR^{2}
\end{equation*}%
We get that 
\begin{eqnarray*}
&&\int_{0}^{t}S_{t-r}u_{r}^{n-1}\cdot \nabla \omega
_{r}^{n-1}dr-\int_{0}^{s}S_{s-r}u_{r}^{n-1}\cdot \nabla \omega _{r}^{n-1}dr
\\
&=&\int_{s}^{t}S_{t-r}u_{r}^{n-1}\cdot \nabla \omega
_{r}^{n-1}dr+\int_{0}^{s}\left( S_{t-s}-I\right) S_{s-r}u_{r}^{n-1}\cdot
\nabla \omega _{r}^{n-1}ds
\end{eqnarray*}
end first estimate each term in $\cB_\alpha$.
This entails
\begin{eqnarray*}
\left\vert \left\vert \int_{s}^{t}S_{t-r}u_{r}^{n-1}\cdot \nabla \omega
_{r}^{n-1}dr\right\vert \right\vert _{\alpha } &\leq &\int_{s}^{t}\left\vert
\left\vert S_{t-r}\left( u_{r}^{n-1}\cdot \nabla \omega _{r}^{n-1}\right)
\right\vert \right\vert _{\alpha }dr \\
&\leq &\int_{s}^{t}\left( t-r\right) ^{-\frac{1}{2}}\left\vert \left\vert
u_{r}^{n-1}\cdot \nabla \omega _{r}^{n-1}\right\vert \right\vert _{\alpha -%
\frac{1}{2}}dr \\
&\leq &CR^{2}\int_{s}^{t}\left( t-r\right) ^{-\frac{1}{2}}dr \\
&\leq &CR^{2}\left( t-s\right) ^{\frac{1}{2}}.
\end{eqnarray*}%
Moreover for $0<\sigma <1/2$ 
\begin{equation*}
\begin{aligned}
\left\vert \left\vert \int_{0}^{s}\left( S_{t-s}-I\right)
S_{s-r}u_{r}^{n-1}\cdot \nabla \omega _{r}^{n-1}ds\right\vert \right\vert
_{\alpha } &\leq C\left( t-s\right) ^{\sigma }\int_{0}^{s}\left\vert
\left\vert S_{s-r}u_{r}^{n-1}\cdot \nabla \omega _{r}^{n-1}\right\vert
\right\vert _{\alpha +\sigma }dr \\
&\leq C\left( t-s\right) ^{\sigma }\int_{0}^{s}\left( s-r\right) ^{-\frac{1%
}{2}-\sigma }\left\vert \left\vert u_{r}^{n-1}\cdot \nabla \omega
_{r}^{n-1}\right\vert \right\vert _{\alpha -\frac{1}{2}}dr \\
&\leq C\left( t-s\right) ^{\sigma }R^{2}\int_{0}^{s}\left( s-r\right) ^{-%
\frac{1}{2}-\sigma }dr \\
&\leq C\left( t-s\right) ^{\sigma }R^{2}s^{\frac{1}{2}-\sigma }.
\end{aligned}
\end{equation*}%
So from the above we deduce the continuity in the $\left\vert \left\vert
\cdot \right\vert \right\vert _{\alpha }$ norm of 
\begin{equation*}
t\rightarrow \int_{0}^{t}S_{t-s}u_{s}^{n-1}\cdot \nabla \omega _{s}^{n-1}ds.
\end{equation*}%
Let's move on to the H\"older continuity. We have that 
\begin{eqnarray*}
\left\vert \left\vert \int_{s}^{t}S_{t-r}u_{r}^{n-1}\cdot \nabla \omega
_{r}^{n-1}dr\right\vert \right\vert _{\alpha -\gamma } &\leq
&\int_{s}^{t}\left\vert \left\vert S_{t-r}\left( u_{r}^{n-1}\cdot \nabla
\omega _{r}^{n-1}\right) \right\vert \right\vert _{\alpha -\gamma }dr \\
&\leq &\int_{s}^{t} (t-s)^{\gamma-1/2} \left\vert \left\vert u_{r}^{n-1}\cdot \nabla \omega
_{r}^{n-1}\right\vert \right\vert _{\alpha -\frac{1}{2}}dr \\
&\leq &CR^{2}{\left( t-s\right) ^{\gamma+1/2 }}.
\end{eqnarray*}


Finally 
\begin{eqnarray*}
\left\vert \left\vert \int_{0}^{s}\left( S_{t-s}-I\right)
S_{s-r}u_{r}^{n-1}\cdot \nabla \omega _{r}^{n-1}dr\right\vert \right\vert
_{\alpha -\gamma } &\leq &\int_{0}^{s}\left\vert \left\vert \left(
S_{t-s}-I\right) S_{s-r}u_{r}^{n-1}\cdot \nabla \omega _{r}^{n-1}\right\vert
\right\vert _{\alpha -\gamma }dr \\
&\leq &C\left( t-s\right) ^{\gamma }\int_{0}^{s}\left\vert \left\vert
S_{s-r}u_{r}^{n-1}\cdot \nabla \omega _{r}^{n-1}\right\vert \right\vert
_{\alpha }dr \\
&\leq &C\left( t-s\right) ^{\gamma }\int_{0}^{s}\left( s-r\right) ^{-\frac{1%
}{2}}\left\vert \left\vert u_{r}^{n-1}\cdot \nabla \omega
_{r}^{n-1}\right\vert \right\vert _{\alpha -\frac{1}{2}}dr \\
&\leq &CR^{2}\left( t-s\right) ^{\gamma }\int_{0}^{s}\left( s-r\right) ^{-%
\frac{1}{2}}dr \\
&\leq &CR^{2}\left( t-s\right) ^{\gamma }s^{\frac{1}{2}}.
\end{eqnarray*}%
From which we can deduce that, on any ball of radius $R$ the drift term is $ \gamma$-{H\"{o}lder continuous for }$0<s<t$%
\ with respect to the $\left\vert \left\vert \cdot \right\vert \right\vert
_{\alpha -\gamma }$   norm, i.e.
\begin{equation*}
\left\vert \left\vert \int_{0}^{t}S_{t-r}u_{r}^{n-1}\cdot \nabla \omega
_{r}^{n-1}dr-\int_{0}^{s}S_{s-r}u_{r}^{n-1}\cdot \nabla \omega
_{r}^{n-1}dr\right\vert \right\vert _{\alpha -\gamma }\leq C(R,T)[\left(
t-s\right) ^{\gamma } +{(t-s)^{\gamma+1/2}} ].
\end{equation*}%
So we obtain
\begin{align*}
    \Big\|  \int_0^t S_{t-r}u^{n-1}_r \cdot \nabla \omega^{n-1}_r dr \Big\|_{\gamma,\alpha-\gamma}\leq C R [t^{\gamma-1/2} +t^\gamma ].
\end{align*}

\subsubsection{Properties of the fractional noise term}

We use here the inequalities (\ref{itoomega7}) and (\ref{itoomega6}) proved
in the previous section.

\subsection{Weak solutions}

In the following $[W^H]_\gamma$ denotes the H\"older norm on $W^H$ on a given time interval.\\

To show the equivalence of weak and mild solutions we first need the following lemma.

\begin{lemma}\label{fubini:young}
    Let $\omega\in V^T$ and $\xi\in \cB_\infty$. Then for every $\varphi\in \cB^*_{\alpha}$ we have 
    \begin{align*}
        \int\limits_0^t \langle \int_0^s S_{r-s}(\xi \cdot \nabla \omega_r)~\txtd W^H_r ,\varphi\rangle ~\txtd s = \int_0^t \int_r^t \langle S_{r-s} (\xi \cdot \nabla \omega_r),\varphi  \rangle~\txtd s ~\txtd W^H_r.
    \end{align*}
\end{lemma}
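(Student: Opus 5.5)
We would prove the identity by dyadic discretisation, reducing it to a finite-sum Fubini theorem and then passing to the limit on both sides. We read the integrand as $S_{s-r}(\xi\cdot\nabla\omega_r)$, with $0\le r\le s\le t$, which is what the mild formulation requires. Set $g_r:=\xi\cdot\nabla\omega_r$. By Remark~\ref{rem:sewing} and the Corollary following Lemma~\ref{sobolev}, since $\omega\in V^T$ and $\xi\in\cB_\infty$,
\[
g\in C([0,T],\cB_{\alpha-\frac12})\cap C^\gamma([0,T],\cB_{\alpha-\gamma-\frac12}).
\]
Theorem~\ref{s} then applies. For each $s$, the inner integral $I_s=\int_0^s S_{s-r}g_r\,\txtd W^H_r$ is the $\cB_\alpha$-limit of the dyadic sums $I^k_s$. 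Moreover, \eqref{impbound} and the estimates of Step~1 give a bound on $\|I^k_s\|_{\cB_\alpha}$ that is uniform in $k$ and $s\in[0,t]$. Since $\varphi\in\cB_\alpha^*$, dominated convergence yields
\[
\int_0^t\langle I_s,\varphi\rangle\,\txtd s=\lim_{k\to\infty}\int_0^t\langle I^k_s,\varphi\rangle\,\txtd s .
\]

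For fixed $k$, $I^k_s$ is a finite sum, so we may exchange the sum with the $\txtd s$ integral. Writing $t_n=n2^{-k}$, this gives
\[
\int_0^t\langle I^k_s,\varphi\rangle\,\txtd s=\sum_n \Big(\int_{t_{n+1}}^t\langle S_{s-t_n}g_{t_n},\varphi\rangle\,\txtd s\Big)\big(W^H_{t_{n+1}}-W^H_{t_n}\big).
\]
The lower limit $t_{n+1}$ appears because the interval $[t_n,t_{n+1}]$ is included in $I^k_s$ only once $t_{n+1}\le s$. Define the real function
\[
f_r:=\int_r^t\langle S_{s-r}g_r,\varphi\rangle\,\txtd s .
\]
Replacing $t_{n+1}$ by $t_n$ in each summand costs at most
\[
\|\varphi\|\,\|g\|_{0,\alpha-\frac12}\int_{t_n}^{t_{n+1}}(s-t_n)^{-1/2}\,\txtd s\cdot K_W^\gamma 2^{-k\gamma}\lesssim 2^{-k(\frac12+\gamma)},
\]
using \eqref{hg:2}. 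Summing over the $2^k t$ intervals gives a total error of order $2^{k(\frac12-\gamma)}\to0$, because $\gamma>\frac12$. Hence the left-hand side equals the limit of the Riemann--Young sums $\sum_n f_{t_n}(W^H_{t_{n+1}}-W^H_{t_n})$. If $f$ is Hölder of some order $\kappa>1-\gamma$, the classical one-dimensional Young theory (or the scalar version of Theorem~\ref{s} with $S\equiv I$) shows that these sums converge to $\int_0^t f_r\,\txtd W^H_r$, which is the right-hand side.

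The main obstacle is this Hölder regularity of $f$. For $r<r'$ we split
\[
f_r-f_{r'}=\int_r^{r'}\langle S_{s-r}g_r,\varphi\rangle\,\txtd s+\int_{r'}^t\langle (S_{s-r}-S_{s-r'})g_r,\varphi\rangle\,\txtd s+\int_{r'}^t\langle S_{s-r'}(g_r-g_{r'}),\varphi\rangle\,\txtd s .
\]
The first term is $O(|r-r'|^{1/2})$ by \eqref{hg:2}. For the second we write $S_{s-r}-S_{s-r'}=(S_{r'-r}-I)S_{s-r'}$ and use \eqref{hg:1} with exponent $\sigma<\frac12$; this gives $|r'-r|^\sigma\int(s-r')^{-1/2-\sigma}\,\txtd s\lesssim|r'-r|^\sigma$. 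The third term is the delicate one. A direct use of the $C^\gamma(\cB_{\alpha-\gamma-\frac12})$ bound produces the kernel $(s-r')^{-1/2-\gamma}$, which is not integrable since $\gamma>\frac12$. We would instead interpolate via \eqref{interpolation:ineq} between $\|g\|_{0,\alpha-\frac12}$ and the $\gamma$-Hölder seminorm. This gives, for $\theta\in(0,1)$,
\[
\|g_r-g_{r'}\|_{\alpha-\frac12-\theta\gamma}\lesssim|r-r'|^{\theta\gamma},
\]
and the corresponding kernel $(s-r')^{-1/2-\theta\gamma}$ is integrable as soon as $\theta\gamma<\frac12$. We then choose $\theta$ so that $\theta\gamma\in(1-\gamma,\frac12)$, an interval that is nonempty exactly because $\gamma>\frac12$. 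Similarly we take $\sigma\in(1-\gamma,\frac12)$ in the second term. With these choices $f\in C^\kappa$ for some $\kappa>1-\gamma$, which closes the argument.
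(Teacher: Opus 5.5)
Your argument is correct (reading the integrand as $S_{s-r}$ is the intended meaning; the paper's proof uses it too), but it follows a different route from the paper.

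The paper proceeds by approximation:
\begin{itemize}
\item it approximates $W^H$ by smooth drivers $W^{H,n}$ with $[W^{H,n}-W^H]_\gamma\to0$;
\item it replaces $\omega$ by the corresponding solutions $\omega^n$, invoking continuous dependence of the solution on the noise;
\item it applies the classical Fubini theorem in the smooth case;
\item it passes to the limit on both sides via stability of the Young integral.
\end{itemize}
Like you, it must also check that $r\mapsto\int_r^t\langle S_{s-r}(\xi\cdot\nabla\omega_r),\varphi\rangle\,\txtd s$ is regular enough for the right-hand side to be a scalar Young integral; the paper claims $\tfrac12$-H\"older.

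You instead work directly with the dyadic sums that define $I_s$ in Theorem~\ref{s}. Fubini is trivial at the discrete level, and dominated convergence handles the left-hand side. The only mismatch with the Riemann--Young sums of $f$ is the strip $[t_n,t_{n+1}]$, which costs $O(2^{k(\frac12-\gamma)})$.

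Your route is self-contained. It uses only Theorem~\ref{s} and scalar Young theory. It needs no approximation of the driver and no continuity of the solution map in the noise. The paper only asserts that continuity, and it sits awkwardly with a lemma stated for an arbitrary $\omega\in V^T$ rather than a solution. Your interpolation step also handles the term $S_{s-r'}(g_r-g_{r'})$ correctly, avoiding the non-integrable kernel $(s-r')^{-1/2-\gamma}$. The paper's H\"older estimate for this quantity instead appeals to a $C^\gamma([0,T];\cB_\alpha)$ norm of $\omega$, which is not controlled by $\|\omega\|_{V^T}$.

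The paper's route is softer and more modular. Once stability of the Young integral with respect to the driver is available, everything reduces to the classical Fubini theorem, independently of how the integral was constructed. However, it rests on ingredients the paper does not establish.
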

\begin{proof}
We consider smooth approximations $(W^{H,n})_{n \geq 1}$
of the noise such that \[ [W^{H,n} - W^H]_{\gamma} \to 0 \text { as } n\to\infty. \] 
Then by the continuous dependence of the solution w.r.t. the noise (which follows from the stability of the Young integration) we can find a sequence of solutions $(\omega^n)_{n\geq 1}$ such that 
\[  \| \omega^n-\omega\|_{V^T} \to 0 \text{ as } n\to \infty.\]
We denote
\[ W^n_{t,r} := \int_r^t \langle S_{r-s} (\xi \cdot \nabla \omega^n_r),\varphi  \rangle~\txtd s \text{ respectively } W_{t,r} := \int_r^t\langle S_{r-s} (\xi \cdot \nabla \omega_r) ,\varphi\rangle~\txtd s\]
and show that the Young integrals
\[ Z^n_t:= \int_0^t W^n_{t,r}~\txtd W^{H,n}_r \text{ and  } Z_t:= \int_0^t W_{t,r}~\txtd W^{H}_r  \]
are well-defined.~Due to the smoothness of $\omega^n$ the first statement is straightforward, we only check that we have enough H\"older regularity for $W_{t,r}$ in order to define $Z_t$ as a real-valued Young integral.~The fact that $W$ also depends on $t$ is not an issue. We compute for $r_2\geq r_1$
\begin{align*}
    W_{t,r_2} -W_{t,r_1}&=\int_{r_2}^t \langle S_{r_2-s} (\xi \cdot \nabla \omega_{r_2}), \varphi \rangle~\txtd s  - \int_{r_1}^t \langle S_{r_1-s} (\xi \cdot \nabla \omega_{r_1}), \varphi \rangle~\txtd s\\
    & = \int_{r_1}^t  \langle S_{s-r_2}(\xi\cdot \nabla \omega_{r_2})- S_{s-r_1}(\nabla \cdot \omega_{r_1}),\varphi\rangle~\txtd s +\int_{r_1}^{r_2} \langle S_{s-r_2}(\xi \cdot \nabla \omega_{r_2}) ,\varphi\rangle ~\txtd s. 
\end{align*}
The second term results in 
\begin{align*}
\Big| \int_{r_1}^{r_2} \langle S(s-r_2)(\xi \cdot \nabla \omega_{r_2}) ,\varphi\rangle ~\txtd s  \Big|    &\leq \int_{r_1}^{r_2} \| S_{s-r_2}(\xi \cdot \nabla \omega_{r_2}) \|_{\cB_\alpha}  \|\varphi\|_{\cB^*_\alpha}~\txtd s\\
& \lesssim \|\varphi\|_{\cB^*_\alpha} \int_{r_1}^{r_2}(s-r_2)^{-1/2}\|\omega_{r_2}\|_{\cB_\alpha}~\txtd s\\
& \lesssim (r_2-r_1)^{1/2}\|\varphi\|_{\cB^*_\alpha} \|\omega\|_{C([0,T];\cB_\alpha)}.
\end{align*}
For the first term we have
\begin{align*}
  &  \Big| \int_{r_2}^t \langle S_{r_2-s} (\xi \cdot \nabla \omega_{r_2}), \varphi \rangle~\txtd s  - \int_{r_1}^t \langle S_{r_1-s} (\xi \cdot \nabla \omega_{r_1}), \varphi \rangle~\txtd s \Big|\\
    &\leq \|\varphi\|_{\cB^*_\alpha} \int_{r_1}^{r_2} \| S_{s-r_1}[ (S_{r_2-r_1 } - \text{I})(\xi \cdot \nabla \omega_{r_2}) -(\xi\cdot \nabla(\omega_{r_1}-\omega_{r_2})) ]  \|_{\cB_\alpha}~\txtd s\\
    & \lesssim (r_2-r_1)^{1/2}\|\omega_2\|_{C([0,T];\cB_\alpha)} +(r_2-r_1)^\gamma T^{1/2} \|\omega_1-\omega_2\|_{C^\gamma([0,T];\cB_\alpha)}.
\end{align*}
Putting these together we infer that $W_{t,\cdot}$ is $1/2$-H\"older continuous. Since $\gamma>1/2$, this means that $Z$ is well-defined as a Young integral. We further set
\[ V^n_s: = \int_0^s S_{s-r} (\xi \cdot \nabla  \omega^n_r)~\txtd W^{H,n}_r \text{ and } V_s: = \int_0^s S_{s-r} (\xi\cdot \nabla  \omega_r)~\txtd W^{H}_r. \]
Due to the smoothness of $W^{H,n}$ and by Fubini's theorem we observe that 
\begin{align*}
   \int_0^t \langle V^n_s,\varphi \rangle ~\txtd s =Z^n_t = \int_0^t \int_r^t \langle S_{s-r} (\xi\cdot \nabla \omega^n_r),\varphi\rangle~\txtd s~ \txtd W^{H,n}_r.  
\end{align*}

Based on this we obtain
\begin{align*}
&    \Big|        \int\limits_0^t \langle \int_0^s S_{r-s}(\xi \cdot \nabla \omega_r)~\txtd W^H_r ,\varphi\rangle ~\txtd s = \int_0^t \int_r^t \langle S_{r-s} (\xi \cdot \nabla \omega_r),\varphi  \rangle~\txtd s ~\txtd W^H_s \Big| =\Big| \int_0^t \langle V_s,\varphi\rangle ~\txtd s -Z_t\Big|\\
    & = \Big| \int_0^t \langle V_s,\varphi\rangle ~\txtd s -\int_0^t \langle V^n_s,\varphi\rangle~\txtd s + Z^n_t -Z_t\Big|\\
    & \leq \int_0^t|\langle V_s-V^n_s,\varphi\rangle|~\txtd s + \|Z^n-Z\|_{C([0,T])}\\
    & \leq T \|V-V^n\|_{C([0,T];\cB_\alpha)}\|\varphi\|_{\cB^*_\alpha} +\|Z^n-Z\|_{C([0,T])}\\
    & \leq T \|\varphi\|_{\cB^*_\alpha} T^\gamma \|V-V^n\|_{C^\gamma([0,T];\cB_{\alpha-\gamma})} + T^\gamma \|Z-Z^n|\|_{C^\gamma([0,T])}\\
    & \lesssim T^{1+\gamma} \|\varphi\|_{\cB^*_\alpha} \| S_{s-\cdot}(\xi \cdot \nabla (\omega^n-\omega) )\|_{C^\gamma} [ W^H-W^{H,n}]_{\gamma} + T^\gamma\|W^n_{t,r}-W_{t,r} \|_{C^{1/2}}[ W^H-W^{H,n}]_{\gamma}\\
    & \lesssim T^{1/2+\gamma} \|\varphi\|_{\cB^*_\alpha} \|\omega-\omega^n\|_{V^T}\| W^H-W^{H,n}\|_{C^\gamma} + T^\gamma\|W^n_{t,r}-W_{t,r} \|_{C^{1/2}} [W^H-W^{H,n}]_{\gamma},
\end{align*}
which tends to $0$ as $n\to \infty$. Here we used in the last line the stability of the Young integral. For the first term we also estimated $\| S_{s-\cdot} (\xi \cdot \nabla \omega_\cdot) \|_{C^\gamma}$ as follows.  For $r_2\geq r_1$ we have
\begin{align*}
    S_{s-r_2}(\xi \cdot \nabla \omega_{r_2}) - S_{s-r_1}(\xi \cdot \nabla \omega_{r_1}) = S_{s-r_2}(\xi \cdot \nabla (\omega_{r_2}-\omega_{r_1}))+ S_{s-r_1}(S_{r_2-r_1}-\text{I})(\xi \cdot \nabla \omega_{r_1}).
\end{align*}
which leads to
\begin{align*}
\|S_{s-r_2}(\xi \cdot \nabla \omega_{r_2}) - S_{s-r_1}(\xi \cdot \nabla \omega_{r_1})\|_{\cB_{\alpha-\gamma}} \lesssim (s-r_2)^{-1/2}(r_2-r_1)^\gamma \|\omega\|_{C^\gamma([0,T];\cB_{\alpha-\gamma})} + (r_2-r_1)^\gamma \|\omega\|_{C([0,T];\cB_\alpha)}.
\end{align*}
\end{proof}
\begin{theorem}
    We let $\varphi\in \cB^*_{\alpha}$. Under the assumptions of Theorem \ref{thm:mild} the mild solution
    \[ \omega_t =S_t \omega_0  - \int_0^t S_{t-r}(u_r\cdot \nabla \omega_r)~\txtd r -\int_0^t S_{t-r}(\xi\cdot \nabla \omega_r)~\txtd W^H_r \]
    is equivalent to the weak solution
\[ \langle \omega_t ,\varphi \rangle =\langle \omega_0,\varphi\rangle +\int_0^t \langle \omega_r , u_r \cdot \nabla \varphi+\Delta \varphi\rangle~\txtd r +\int_0^t \langle \omega_r, \xi_r \cdot \nabla \varphi\rangle~\txtd W^H_r.  \]
\end{theorem}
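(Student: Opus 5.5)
We prove the two implications separately. The direction mild $\Rightarrow$ weak is the substantial one, and Lemma~\ref{fubini:young} is the main input. Throughout we work with test functions $\varphi\in\cB_\infty$, which are dense in $\cB^*_\alpha$. For such $\varphi$ the quantities $\Delta\varphi$ and $\xi\cdot\nabla\varphi$ are again smooth, and $S$ is self-adjoint, so $\langle S_\tau f,\varphi\rangle=\langle f,S_\tau\varphi\rangle$. The identity for general $\varphi\in\cB^*_\alpha$ then follows by density, since every term is continuous in $\varphi$ with respect to the $\cB^*_\alpha$ norm by the bounds of Theorem~\ref{s}.

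\textbf{Mild $\Rightarrow$ weak.} Let $\omega$ be the mild solution. We test the mild formula against $\Delta\varphi$ and integrate in time over $[0,t]$. This produces three terms.

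\begin{itemize}
\item For the initial datum, $\int_0^t\langle S_s\omega_0,\Delta\varphi\rangle ds=\langle S_t\omega_0-\omega_0,\varphi\rangle$, because $\frac{d}{ds}S_s\varphi=\Delta S_s\varphi$.
\item For the drift we use the classical Fubini theorem together with
\[
\int_r^t \Delta S_{s-r}f\,ds=(S_{t-r}-I)f,
\]
applied to $f=u_r\cdot\nabla\omega_r$. This turns $\int_0^t\langle \int_0^s S_{s-r}(u_r\cdot\nabla\omega_r)dr,\Delta\varphi\rangle ds$ into $\int_0^t\langle S_{t-r}(u_r\cdot\nabla\omega_r)-u_r\cdot\nabla\omega_r,\varphi\rangle dr$.
\item For the noise term we invoke Lemma~\ref{fubini:young} with test function $\Delta\varphi$ and then apply the same identity inside the Young integral. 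This gives $\int_0^t\langle S_{t-r}(\xi\cdot\nabla\omega_r)-\xi\cdot\nabla\omega_r,\varphi\rangle dW^H_r$.
\end{itemize}

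Summing the three contributions, the terms containing $S_{t-\cdot}$ reassemble, by the mild formula at time $t$, into $\langle\omega_t,\varphi\rangle-\langle\omega_0,\varphi\rangle$. We then integrate by parts using $\nabla\cdot u=\nabla\cdot\xi=0$, so that $\langle u_r\cdot\nabla\omega_r,\varphi\rangle=-\langle\omega_r,u_r\cdot\nabla\varphi\rangle$, and similarly for $\xi$. This yields the weak formulation, with the sign conventions adjusted to match the statement.

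A technical point in the noise step is that the Young integral $\int_0^t\langle\xi\cdot\nabla\omega_r,\varphi\rangle dW^H_r$ must be well defined. It is, because $r\mapsto\langle\xi\cdot\nabla\omega_r,\varphi\rangle$ is $\gamma$-Hölder: $\omega\in C^\gamma([0,T];\cB_{\alpha-\gamma})$ and $\varphi$ is smooth.

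\textbf{Weak $\Rightarrow$ mild.} We use the standard test-function argument with time-dependent test functions $\psi(s)=S_{t-s}\varphi$, $s\in[0,t]$. First we extend the weak formulation to $C^1$-in-time smooth test functions by a product rule for Young integrals. This rule holds because $s\mapsto\langle\omega_s,\psi(s)\rangle$ is a sum of a Lebesgue part and a Young integral against a $\gamma$-Hölder integrand with $\gamma>1/2$. The result is
\[
\langle\omega_t,\psi(t)\rangle=\langle\omega_0,\psi(0)\rangle+\int_0^t\langle\omega_r,\partial_r\psi+\Delta\psi+u_r\cdot\nabla\psi\rangle dr+\int_0^t\langle\omega_r,\xi\cdot\nabla\psi\rangle dW^H_r.
\]
Choosing $\psi(s)=S_{t-s}\varphi$ makes $\partial_s\psi+\Delta\psi=0$, so the Laplacian term cancels. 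Undoing the integrations by parts and moving $S_{t-r}$ back onto $\omega$, with the Young integral commuting with the pairing by the Riemann-sum construction of Theorem~\ref{s}, gives exactly the mild formula tested against $\varphi$. Since $\varphi$ ranges over a dense set, the mild identity holds in $\cB_\alpha$. Uniqueness from Theorem~\ref{thm:mild} then identifies the two solutions.

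\textbf{Main obstacle.} The delicate step is justifying the time-dependent test-function identity for the Young integral near $s=t$, where $\psi$ is only as regular as $\varphi$. We would handle this by first taking $\varphi\in\cB_\infty$, for which $\psi$ is smooth uniformly in $s$, and then passing to the limit.
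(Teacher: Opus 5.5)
Your mild $\Rightarrow$ weak argument is essentially the paper's proof. The paper likewise integrates the Laplacian of the mild formula against $\varphi$ over $[0,t]$ and swaps the order of integration, using classical Fubini for the drift and Lemma~\ref{fubini:young} for the noise. It then evaluates $\int_r^t \frac{\txtd}{\txtd s} S_{s-r}\,\txtd s = S_{t-r}-I$ and reassembles the $S_{t-\cdot}$ terms into $\langle \omega_t,\varphi\rangle$ via the mild formula. The paper proves only this direction (taking $\omega_0=0$ w.l.o.g.) and calls the converse standard. Your argument with $\psi(s)=S_{t-s}\varphi$ is the usual way to supply that converse. It is valid provided you take $\omega\in V^T$, so that the scalar Young integrals and the convolution of Theorem~\ref{s} exist.

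Two small corrections. First, no sign adjustment is needed: integrating by parts with $\nabla\cdot u=\nabla\cdot\xi=0$ turns the minus signs of the mild formula exactly into the plus signs of the statement. Second, your opening density claim fails for the weak identity. The terms $\langle\omega_r,\Delta\varphi\rangle$, $\langle\omega_r,u_r\cdot\nabla\varphi\rangle$ and the noise pairing are not controlled by $\|\varphi\|_{\cB^*_\alpha}$; they are not even defined for general $\varphi\in H^{-2\alpha}$. So the statement has to be read for smooth $\varphi$, as in the paper's definition of weak solution. Only the tested mild identity, all of whose terms lie in $\cB_\alpha$, extends by density to all $\varphi\in\cB^*_\alpha$.
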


\begin{proof}
We assume w.l.o.g that $\omega_0=0$. We show that a mild solution is a weak solution, the other direction follows by standard arguments. We have using the definition of the mild solution and Lemma~\ref{fubini:young} that 
\begin{align*}
&\int_0^t \langle \mathcal{E} \omega_s, \varphi\rangle~\txtd s = \int_0^t \langle \mathcal{E} \int_0^s S_{s-r} (\xi \cdot \nabla \omega_r)~\txtd W^H_r~\txtd s + \int_0^s S_{s-r}(u_r\cdot \nabla \omega_r)~\txtd r, \varphi\rangle~\txtd s \\
&=\int_0^t \int_r^t \langle \mathcal{E} S_{s-r} (\xi \cdot \nabla \omega_r),\varphi\rangle~\txtd s~\txtd r + \int_0^t \int_r^t  \langle \mathcal{E} S_{s-r}(u_r\cdot \nabla \omega_r),\varphi\rangle ~\txtd s~\txtd r\\
&= \int_0^t \langle \int_r^t \frac{\txtd }{\txtd s} S_{s-r} (\xi \cdot \nabla \omega_r)~\txtd s,\varphi\rangle~\txtd W^H_r + \int_0^t \langle \int_r^t \frac{\txtd}{\txtd s} S_{s-r}(u_r\cdot \nabla \omega_r),\varphi\rangle ~\txtd s ~\txtd r\\
    & =\int_0^t \langle S_{t-r}(\xi \cdot \nabla \omega_r),\varphi\rangle ~\txtd W^H_r - \int_0^t \langle \xi \cdot \nabla \omega_r,\varphi\rangle~\txtd W^H_r +\int_0^t \langle S_{t-r} (u_r\cdot \nabla \omega_r),\varphi\rangle~\txtd r - \int_0^t \langle u_r\cdot\nabla \omega_r,\varphi\rangle~\txtd r\\
    & = \langle \omega_t,\varphi\rangle - \int_0^t \langle \xi \cdot \nabla \omega_r,\varphi\rangle~\txtd W^H_r - \int_0^t \langle u_r\cdot\nabla \omega_r,\varphi\rangle~\txtd r.
\end{align*}

\end{proof}


\section{Hurst parameter estimation}\label{sect:hurst}

This section is devoted to the construction of a strongly consistent
estimator for the Hurst parameter $H$ of the fractional Brownian motion
driving the transport noise in the vorticity equation. However, the results
of this section are not specific to the particular vorticity equation
considered above. In fact, the construction and consistency of the Hurst
parameter estimator rely solely on the temporal scaling properties of the
fractional Brownian motion driving the equation. Neither the precise form of
the nonlinear drift term nor the specific structure of the transport noise
enters the argument. Once local existence and sufficient regularity of
solutions are guaranteed, the drift contribution becomes asymptotically
negligible in the rescaled quadratic variation, while the stochastic
integral term fully determines the limiting behavior. Consequently, the 
\emph{same} estimation procedure applies verbatim to the more general class
of stochastic partial differential equations introduced earlier, provided
they are driven by fractional Brownian motion with Hurst parameter $H>\frac{1%
}{2}$.

Nevertheless for the clarity of the exposition, in the following we will
refer only to the vorticity equation. More precisely, let $\omega _{t}$ be
the vorticity solution and let $\varphi $ be a fixed smooth test function
(e.g.\ a Fourier mode). We assume that we can observe the scalar-valued
process $X_{\cdot }:=\langle \omega _{\cdot },\varphi \rangle $ on an
interval $\left[ 0,t\right] $. The process $X$ constitutes our observable. We
will give an estimator of the Hurst parameter $H$ only in terms of $X$. To
be able to do so, we need a (weak) solution of the vorticity equation well
defined locally and not globally in time. The increments of $X$ over small
time intervals are decomposed into a drift term and a stochastic integral
with respect to fractional Brownian motion. This decomposition isolates the
contribution of the noise. Regularity estimates for the solution imply that
the drift increments are of higher order in the mesh size than the
stochastic integral terms. After rescaling, their contribution to the
quadratic variation vanishes almost surely. We also show that the rescaled
quadratic variation of the noise term converges almost surely to a finite
limit. Crucially, the explicit form of this limit is not required for the
sequel, only its existence and non-degeneracy are used (it is this property
that allows for the application of the result and the construction to the
solution of the general equation. Exploiting the self-similar scaling of
fractional Brownian motion, a ratio-type estimator based on quadratic
variations at two successive dyadic scales is introduced. Combining the
previous steps, the estimator is shown to converge almost surely to the true
Hurst parameter $H$. Let us proceed next with the details of the
construction.

The following proposition establishes a precise description of how the
small--time increments of fractional Brownian motion behave when they are
aggregated across a fine time partition. Although fractional Brownian motion
does not admit a classical quadratic variation, the result shows that, after
applying the correct rescaling, the accumulated squared increments stabilize
around a deterministic quantity, and that this stabilization occurs with
strong probabilistic control. In particular, the deviations from this
deterministic behavior become negligible as the time discretization is
refined.

The accompanying corollary strengthens this conclusion by showing that the
stabilization holds almost surely along dyadic partitions. This pathwise
convergence is crucial, as it allows one to work with individual
realizations of the noise rather than with expectations or distributional
limits. From the point of view of statistical estimation, this ensures that
the observed time series exhibits a predictable scaling behavior that can be
exploited directly from data.

In what follows, we will use equidistant partitions $(t_{j})_{j\geq 0}$ of
the positive half-line $[0,\infty )$, with $t_{0}=0$ and $t_{j+1}-t_{j}=%
\frac{1}{n}$ and denote by

\begin{equation*}
\mathcal{A}_{t}^{n}=\left\{ j\left\vert \left[ t_{j},t_{j+1}\right] \subset %
\left[ 0,t\right] \right. \right\} .
\end{equation*}

\begin{proposition}\label{prop15}
There exists a constant $C=C\left( t\right) $ independent of $n$ and $H$ such
that 
\begin{equation}
\mathbb{E}\left[ \left( \left( \frac{1}{n}\right) ^{1-2H}\sum_{j\in \mathcal{A}%
_{t}^{n}}\left( W_{t_{j+1}}^{H}-W_{t_{j}}^{H}\right) ^{2}-t\right) ^{2}%
\right] \leq \frac{C}{n^{4-4H}}.  \label{firstineq}
\end{equation}
\end{proposition}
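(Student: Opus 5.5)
The plan is a direct second-moment computation for the Gaussian quadratic functional, using stationarity and self-similarity of the increments of $W^H$. Set $N=N(n,t):=\#\mathcal{A}^n_t=\lfloor nt\rfloor$ and introduce the normalised increments $\xi_j:=n^{H}\bigl(W^H_{t_{j+1}}-W^H_{t_j}\bigr)$, $j\in\mathcal{A}^n_t$. By self-similarity, $(\xi_j)_j$ has the law of the increment sequence of $W^H$ on the integer grid. It is a stationary centred Gaussian sequence with $\mathbb{E}[\xi_j^2]=1$ and covariance
\[
\rho(k)=\tfrac12\bigl(|k+1|^{2H}-2|k|^{2H}+|k-1|^{2H}\bigr),\qquad k\in\mathbb{Z}.
\]
Since $(1/n)^{1-2H}\bigl(W^H_{t_{j+1}}-W^H_{t_j}\bigr)^2=\tfrac1n\xi_j^2$, the quantity inside the expectation in \eqref{firstineq} equals $\tfrac1n\sum_{j}\xi_j^2-t$.

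The first step is a bias/variance split:
\[
\mathbb{E}\Bigl[\Bigl(\tfrac1n\sum_j\xi_j^2-t\Bigr)^2\Bigr]=\Bigl(\tfrac Nn-t\Bigr)^2+\tfrac{1}{n^2}\operatorname{Var}\Bigl(\sum_j\xi_j^2\Bigr).
\]
The bias term satisfies $|N/n-t|\le 1/n$, so it contributes at most $n^{-2}\le n^{-(4-4H)}$, because $4-4H\le 2$ for $H\ge1/2$. For the variance term we use Isserlis' (Wick's) formula, $\operatorname{Cov}(\xi_j^2,\xi_k^2)=2\rho(j-k)^2$, which gives
\[
\tfrac{1}{n^2}\operatorname{Var}\Bigl(\sum_j\xi_j^2\Bigr)=\tfrac{2}{n^2}\sum_{j,k=0}^{N-1}\rho(j-k)^2\le\tfrac{2N}{n^2}\sum_{|k|<N}\rho(k)^2 .
\]

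The second step controls $\rho$. For $H\in(1/2,1)$ one has $\rho(k)\ge0$, $\rho(0)=1$, and, by a second-order Taylor expansion of $x\mapsto x^{2H}$ (the second derivative is decreasing), $\rho(k)\le H(2H-1)(|k|-1)^{2H-2}$ for $|k|\ge2$. Because $H(2H-1)\le1$, this constant is uniform in $H$. Hence
\[
\sum_{|k|<N}\rho(k)^2\le 3+2\sum_{m=1}^{N}m^{4H-4}.
\]
The third step sums this power series and inserts $N\le nt$. The target is the bound $N\sum_{|k|<N}\rho(k)^2\lesssim N^{4H-2}$, so that the variance term is $\lesssim n^{-2}(nt)^{4H-2}=t^{4H-2}n^{4H-4}$, which is exactly \eqref{firstineq} with $C(t)\approx 2\max(1,t)^{2}$ plus the bias contribution.

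I expect this last summation to be the main obstacle. The estimate $\sum_{m\le N}m^{4H-4}\lesssim N^{4H-3}$ is the regime of the sum diverging with $N$. It is effective when $4H-4>-1$, i.e.\ $H>3/4$, but its constant $(4H-3)^{-1}$ blows up as $H\downarrow 3/4$, which collides with the requirement that $C$ be independent of $H$. For $H\le 3/4$ the same sum behaves like $O(\log N)$ or $O(1)$, and the variance term is then of order $n^{-1}$ rather than $n^{4H-4}$. I will first try to close the argument with the exact identity
\[
\sum_{j,k}\rho(j-k)=\operatorname{Var}\Bigl(\sum_j\xi_j\Bigr)=N^{2H},
\]
combined with $\rho^2\le\rho$, since this bound is free of $H$-dependent constants. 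That identity only delivers $\tfrac{2}{n^2}N^{2H}\le 2t^{2H}n^{2H-2}$, which is weaker than the stated rate. The delicate point, on which the statement as worded hinges, is therefore to interpolate between this uniform bound and the sharp power-sum bound. I will do the summation carefully, separating the diagonal band $|k|\le\sqrt N$ from the tail, and track the constant explicitly as $H$ varies. I will keep track, in this step, of whether the resulting rate reaches $n^{-(4-4H)}$ across all of $H\in(1/2,1)$, and in particular near and below $H=3/4$. This is the point where the argument must be checked most closely against the claimed exponent and against the uniformity of $C$ in $H$.
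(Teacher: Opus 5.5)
Your set-up is correct and follows the same route as the paper's proof: self-similarity, a split into bias and variance (the paper's $A+2B$ plus the term $(t-\lfloor nt\rfloor/n)^2$), the Gaussian fourth-moment identity, a second-order bound on $\rho$ (the paper's $f_H(a)\le c_Ha^2$), and a power sum of $m^{4H-4}$ treated separately for $H<3/4$, $H=3/4$, $H>3/4$.

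The gap is your final step. No finer summation can close it, because \eqref{firstineq} is false as stated. For $H>1/2$ we have $\rho(k)\ge0$, so every off-diagonal term is nonnegative, and
\[
\mathbb{E}\Bigl[\Bigl(\frac1n\sum_{j}\xi_j^2-t\Bigr)^2\Bigr]\ \ge\ \frac{2}{n^2}\sum_{j,k=0}^{N-1}\rho(j-k)^2\ \ge\ \frac{2N}{n^2}\ \ge\ \frac{2t}{n}-\frac{2}{n^2}.
\]
Since $n^{-1}/n^{-(4-4H)}=n^{3-4H}\to\infty$ for $H<3/4$, the claimed rate fails there. The same Taylor argument also gives the matching lower bound $\rho(k)\ge H(2H-1)(k+1)^{2H-2}$, because $x^{2H-2}$ is decreasing. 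From it you get a left-hand side $\gtrsim t\log n/n$ at $H=3/4$. For $H>3/4$ you get $\liminf_n n^{4-4H}\,\mathbb{E}[\cdots]\gtrsim t^{4H-2}/(4H-3)$, so even where the exponent is right, no constant independent of $H$ exists. The obstruction you detected is therefore real, and interpolating between the $N^{2H}$ bound and the power-sum bound cannot reach the claim.

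The paper's proof breaks at exactly this point. It obtains $A\le 2t/n$ and $B=O(1/n)$, $O(\log n/n)$, $O(n^{-(4-4H)})$ in the three regimes, then declares $n^{-(4-4H)}$ the ``worst case''. That compares rates across different $H$ rather than for a fixed $H$: for $H<3/4$ its own diagonal term $A=2\lfloor nt\rfloor/n^2$ eventually exceeds $Cn^{-(4-4H)}$. Its constants $c_H$, $(3-4H)^{-1}$, $(4H-3)^{-1}$ also depend on $H$, contrary to the statement.

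What you should do is correct the statement; your computations already prove the correct versions. Your first two steps, with the standard power-sum estimates, give for each fixed $H\in(1/2,1)$ the sharp bound $\mathbb{E}[\cdots]\le C_H(t)\,r_H(n)$, where $r_H(n)=n^{-1}$, $n^{-1}\log n$, $n^{-(4-4H)}$ for $H<3/4$, $H=3/4$, $H>3/4$ respectively. Alternatively, the identity you found, $\sum_{j,k}\rho(j-k)=N^{2H}$, together with $0\le\rho\le1$, gives
\[
\mathbb{E}[\cdots]\ \le\ \frac{1}{n^2}+\frac{2t^{2H}}{n^{2-2H}}\ \le\ \frac{1+2\max\{t,t^2\}}{n^{2-2H}}.
\]
This is a true statement with a constant independent of both $n$ and $H$, at the cost of the weaker exponent $2-2H$. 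Either bound is summable along $n=2^k$ for fixed $H<1$, so Chebyshev's inequality and Borel--Cantelli still yield Corollary~\ref{scaled:cov}, which is the only use the paper makes of this proposition. You should therefore conclude with one of these corrected statements rather than try to force the stated exponent.
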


The proposition gives us the following immediate corollary:

\begin{corollary}\label{scaled:cov}
For a dyadic partition $(t_{j})_{j\geq 0}$ of the positive half-line $%
[0,\infty )$ with $t_{j+1}-t_{j}=\frac{1}{2^{k}}$ we have that, $P$-almost
surely 
\begin{equation*}
\lim_{k\rightarrow \infty }\left( \frac{1}{2^{k}}\right) ^{1-2H}\sum_{j\in 
\mathcal{A}_{t}^{2^{k}}}\left( W_{t_{j+1}}^{H}-W_{t_{j}}^{H}\right) ^{2}=t.
\end{equation*}
\end{corollary}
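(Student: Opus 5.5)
The corollary is stated right after Proposition \ref{prop15} and follows from it by a Borel--Cantelli argument along the dyadic subsequence $n=2^k$. Set
\[
Q_k:=\left(\frac{1}{2^k}\right)^{1-2H}\sum_{j\in\mathcal{A}_t^{2^k}}\left(W^H_{t_{j+1}}-W^H_{t_j}\right)^2 .
\]
Applying \eqref{firstineq} with $n=2^k$ gives $\mathbb{E}[(Q_k-t)^2]\le C\,2^{-k(4-4H)}$. Since $H<1$, the exponent $4-4H$ is strictly positive, and this is the only place where the assumption $H<1$ enters.

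Fix $\eta>0$. Chebyshev's inequality gives
\[
\mathbb{P}\left(|Q_k-t|>\eta\right)\le \eta^{-2}C\,2^{-k(4-4H)},
\]
and the right-hand side is summable in $k$ because it is a geometric series with ratio $2^{-(4-4H)}<1$. By the first Borel--Cantelli lemma, almost surely $|Q_k-t|\le\eta$ for all $k$ large enough.

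To remove the dependence on $\eta$, take $\eta=1/m$ for $m\in\mathbb{N}$ and intersect the countably many full-measure events. This yields $\limsup_k|Q_k-t|\le 1/m$ for every $m$, almost surely, and hence $Q_k\to t$ almost surely.

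Alternatively, one can use the summability directly: $\mathbb{E}\sum_k (Q_k-t)^2<\infty$, so $\sum_k (Q_k-t)^2<\infty$ almost surely, which forces $Q_k-t\to0$. This shortcut avoids the $\eta$-intersection step.

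The argument needs no independence, since the first Borel--Cantelli lemma does not require it. It only needs a fixed $t$ and the fact that the constant $C=C(t)$ in Proposition \ref{prop15} does not depend on $n$. There is no real obstacle. The only point to check is that the dyadic restriction is what makes the bound summable, since along general $n$ the series $\sum_n n^{-(4-4H)}$ diverges when $H\ge 3/4$.
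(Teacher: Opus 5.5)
Your argument is correct and is exactly the deduction the paper leaves implicit when it calls this an ``immediate corollary'': set $n=2^k$ in \eqref{firstineq}, note that the resulting bound is summable in $k$, and conclude either by Borel--Cantelli or, as in your shortcut, from $\mathbb{E}\sum_k (Q_k-t)^2<\infty$. One caveat: the rate $n^{-(4-4H)}$ in Proposition~\ref{prop15} is only valid for $H>3/4$, because the diagonal term alone is $A=2[tn]/n^2\approx 2t/n$, so the true order is $n^{-1}$ for $H<3/4$ and $n^{-1}\log n$ at $H=3/4$; these rates are still summable along $n=2^k$, so your proof goes through unchanged, although your side remark about general $n$ should then say that the series diverges for every $H$.
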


\noindent \textbf{Proof of Proposition \ref{prop15}.} We will use the fact that, for $i\neq j$, we have that 
\begin{eqnarray*}
&&\mathbb{E}\left[ \left( W_{t_{i+1}}^{H}-W_{t_{i}}^{H}\right) ^{2}\left( \left(
W_{t_{j+1}}^{H}-W_{t_{j}}^{H}\right) ^{2}\right) \right] -\left( \frac{1}{n}%
\right) ^{4H} \\
&=&\frac{(2|i-j|^{2H}-|i-j+1|^{2H}-|i-j-1|^{2H})^{2}}{2n^{4H}} \\
&=&|i-j|^{4H}\frac{(2-|1+\frac{1}{|i-j|}|^{2H}-|1-\frac{1}{|i-j|}|^{2H})^{2}%
}{2n^{4H}} \\
&=&\frac{|i-j|^{4H}}{2n^{4H}}(|1+\frac{1}{|i-j|}|^{2H}+|1-\frac{1}{|i-j|}%
|^{2H}-2)^{2} \\
&=&\frac{|i-j|^{4H}}{2n^{4H}}\left( f_{H}\left( \frac{1}{|i-j|}\right)
\right) ^{2},
\end{eqnarray*}%
where $f_{H}\left( a\right) =\left( 1-a\right) ^{2H}+\left( 1+a\right)
^{2H}-2.$ We show that there exists a constant $c_{H}$ such that 
\begin{equation*}
f_{H}\left( a\right) =\left( 1-a\right) ^{2H}+\left( 1+a\right) ^{2H}-2\leq
c_{H}a^{2}
\end{equation*}%
for $a=\frac{1}{|i-j|}=\left\{ \frac{1}{1},\frac{1}{2},\frac{1}{3}%
,....\right\} .$ Observe that $a\rightarrow \frac{f_{H}\left( a\right) }{a^{2}%
}$ is continuous on $(0,1]$ and 
\begin{equation*}
\lim_{a\rightarrow 0}\frac{\left( 1-a\right) ^{2H}+\left( 1+a\right) ^{2H}-2%
}{a^{2}}=\lim_{a\rightarrow 0}\frac{2H\left( 2H-1\right) \left( \left(
1+a\right) ^{2H-2}+\left( 1-a\right) ^{2H-2}\right) }{2}=2H\left( 2H-1\right)
\end{equation*}%
so indeed $a\rightarrow \frac{f_{H}\left( a\right) }{a^{2}}$ is bounded on $%
(0,1]$ and we define $c_{H}:=\max_{a\in (0,1]}\frac{f_{H}\left( a\right) }{%
a^{2}}$.

We are now ready to prove (\ref{firstineq}). Observe that it is enough to
prove that 
\begin{equation}
\mathbb{E}\left[ \left( \left( \frac{1}{n}\right) ^{1-2H}\sum_{j\in \mathcal{A}%
_{t}^{n}}\left( W_{t_{j+1}}^{H}-W_{t_{j}}^{H}\right) ^{2}-\frac{\left[ tn%
\right] }{n}\right) ^{2}\right] \leq \frac{C}{n^{4-4H}} \label{firstineq1}
\end{equation}
where $\left[ tn\right] $ is the integer part on $tn$. Observe that 
\begin{equation*}
\begin{aligned}
&\mathbb{E}\left[ \left( \left( \frac{1}{n}\right) ^{1-2H}\sum_{j\in \mathcal{A}%
_{t}^{n}}\left( W_{t_{j+1}}^{H}-W_{t_{j}}^{H}\right) ^{2}-\frac{\left[ tn%
\right] }{n}\right) ^{2}\right] \\
&=\mathbb{E}\left[ \left( \sum_{j\in \mathcal{A}%
_{t}^{n}}\left( \left( \frac{1}{n}\right) ^{1-2H}\left(
W_{t_{j+1}}^{H}-W_{t_{j}}^{H}\right) ^{2}-\left( t_{j+1}-t_{j}\right)
\right) \right) ^{2}\right] \\
&=A+2B
\end{aligned}
\end{equation*}
where (note that $t_{j+1}-t_{j}=\frac{1}{n}$, $\mathbb{E}\left[ \left(
W_{t_{j+1}}^{H}-W_{t_{j}}^{H}\right) ^{2}\right] =\left(
t_{j+1}-t_{j}\right) ^{2H}$, $\mathbb{E}\left[ \left(
W_{t_{j+1}}^{H}-W_{t_{j}}^{H}\right) ^{4}\right] =3\left(
t_{j+1}-t_{j}\right) ^{4H}$)
\begin{eqnarray*}
A &:&=\sum_{j\in \mathcal{A}_{t}^{n}}\mathbb{E}\left[ \left( \left( \frac{1}{n}%
\right) ^{1-2H}\left( W_{t_{j+1}}^{H}-W_{t_{j}}^{H}\right) ^{2}-\left(
t_{j+1}-t_{j}\right) \right) ^{2}\right] \\
&=&\left( \frac{1}{n}\right) ^{2-4H}\sum_{j\in \mathcal{A}_{t}^{n}}\mathbb{E}\left[
\left( \left( W_{t_{j+1}}^{H}-W_{t_{j}}^{H}\right) ^{2}-\left(
t_{j+1}-t_{j}\right) ^{2H}\right) ^{2}\right] \\
&=&\left( \frac{1}{n}\right) ^{2-4H}\sum_{j\in \mathcal{A}_{t}^{n}}\left( \mathbb{E}
\left[ \left( W_{t_{j+1}}^{H}-W_{t_{j}}^{H}\right) ^{4}\right] -\left(
t_{j+1}-t_{j}\right) ^{4H}\right) \\
&=&\left( \frac{1}{n}\right) ^{2-4H}\sum_{j\in \mathcal{A}_{t}^{n}}2\left(
t_{j+1}-t_{j}\right) ^{4H}=2\left[ tn\right] \left( \frac{1}{n}\right)
^{2}\leq \frac{2t}{n}.
\end{eqnarray*}%
Also 
\begin{equation*}
\begin{aligned}
B &:=\left( \frac{1}{n}\right) ^{2-4H}\left( \sum_{\substack{ i,j\in 
\mathcal{A}_{t}^{n}  \\ j\leq i-1}}\mathbb{E}\left[ \left( \left( \left(
W_{t_{i+1}}^{H}-W_{t_{i}}^{H}\right) ^{2}-\left( t_{i+1}-t_{i}\right)
^{2H}\right) \right) \left( \left( \left(
W_{t_{j+1}}^{H}-W_{t_{j}}^{H}\right) ^{2}-\left( t_{j+1}-t_{j}\right)
^{2H}\right) \right) \right] \right) \\
&=\left( \frac{1}{n}\right) ^{2-4H}\sum_{\substack{ i,j\in \mathcal{A}%
_{t}^{n}  \\ j\leq i-1}}\mathbb{E}\left[ \left( W_{t_{i+1}}^{H}-W_{t_{i}}^{H}\right)
^{2}\left( \left( W_{t_{j+1}}^{H}-W_{t_{j}}^{H}\right) ^{2}\right) \right]
-\left( \frac{t}{n}\right) ^{4H} \\
&\leq \left( \frac{1}{n}\right) ^{2-4H}\sum_{\substack{ i,j\in \mathcal{A}%
_{t}^{n}  \\ j\leq i-1}}t^{4H}\frac{|i-j|^{4H}}{2n^{4H}}\left( f_{H}\left( 
\frac{1}{|i-j|}\right) \right) ^{2} \\
&\leq \left( \frac{1}{n}\right) ^{2-4H}\sum_{\substack{ i,j\in \mathcal{A}%
_{t}^{n}  \\ j\leq i-1}}t^{4H}\frac{|i-j|^{4H}}{2n^{4H}}c_{H}^{2}\frac{1}{%
|i-j|^{4}}\leq \frac{c_{H}^{2}}{2}\left( \frac{1}{n}\right) ^{2}\sum 
_{\substack{ i,j\in \mathcal{A}_{t}^{n}  \\ j\leq i-1}}|i-j|^{4H-4}.
\end{aligned}
\end{equation*}
By changing the order of summation we get that 
\begin{equation*}
B\leq \frac{c_{H}^{2}}{2}\left( \frac{1}{n}\right) ^{8H-2}\sum_{1\leq i\leq %
\left[ nt\right] }\sum_{j=1}^{i-1}j^{4H-4}.
\end{equation*}%
We compare $\sum_{j=1}^{i-1}j^{4H-4}$ with the integral 
\begin{equation*}
\int_{1}^{i-1}x^{4H-4}dx.
\end{equation*}%
Since the function $x\rightarrow x^{4H-4}$ is decreasing we get that 
\begin{equation*}
\sum_{j=1}^{i-1}j^{4H-4}\leq 1+\int_{1}^{i}x^{4H-4}dx=\left\{ 
\begin{array}{ccc}
1+\frac{1}{4H-3}\left. x^{4H-3}\right\vert _{1}^{i}\leq 1+\frac{1}{3-4H} & if
& H<\frac{3}{4} \\ 
1+\ln \left( i\right) & if & H=\frac{3}{4} \\ 
1+\frac{1}{4H-3}\left. x^{4H-3}\right\vert _{1}^{i}\leq 1+\frac{2}{4H-3}%
i^{4H-3} & if & H\geq \frac{3}{4}.%
\end{array}%
\right.
\end{equation*}%
It follows that 
\begin{eqnarray*}
B &\leq &\frac{c_{H}^{2}}{2}\left( \frac{1}{n}\right) ^{2}\sum_{1\leq i\leq %
\left[ nt\right] }\left\{ 
\begin{array}{ccc}
1+\frac{1}{4H-3}\left. x^{4H-3}\right\vert _{1}^{i}\leq 1+\frac{1}{3-4H} & if
& H<\frac{3}{4} \\ 
1+\ln \left( i\right) & if & H=\frac{3}{4} \\ 
1+\frac{1}{4H-3}\left. x^{4H-3}\right\vert _{1}^{i}\leq 1+\frac{2}{4H-3}%
i^{4H-3} & if & H\geq \frac{3}{4}%
\end{array}%
\right. \\
&\leq &\frac{c_{H}^{2}}{2}\left( \frac{1}{n}\right) ^{2}\left\{ 
\begin{array}{ccc}
c\left[ nt\right] & if & H<\frac{3}{4} \\ 
\left[ nt\right] \left( 1+\ln \left[ nt\right] \right) & if & H=\frac{3}{4}
\\ 
c\left( \left[ nt\right] +\frac{1}{4H-2}\left[ nt\right] ^{4H-2}\right) & if
& H\geq \frac{3}{4}%
\end{array}%
\right. =\left\{ 
\begin{array}{ccc}
O(\frac{1}{n}) & if & H<\frac{3}{4} \\ 
O(\frac{\ln n}{n}) & if & H=\frac{3}{4} \\ 
O\left( \left( \frac{1}{n}\right) ^{4-4H}\right) & if & H\geq \frac{3}{4}.%
\end{array}%
\right.
\end{eqnarray*}%
Since 
\begin{eqnarray*}
\sum_{1\leq i\leq \left[ nt\right] }\left( 1+\frac{2}{4H-3}i^{4H-3}\right)
&\leq &c\left( \left[ nt\right] +\int_{0}^{\left[ nt\right]
}x^{4H-3}dx\right) \\
&=&c\left( \left[ nt\right] +\frac{1}{4H-2}\left. x^{4H-2}\right\vert _{0}^{%
\left[ nt\right] }\right) =c\left( \left[ nt\right] +\frac{1}{4H-2}\left[ nt%
\right] ^{4H-2}\right).
\end{eqnarray*}%
The worst case scenario is the last case where the order is $\left( \frac{1}{%
n}\right) ^{4-4H}$ which still converges to 0 as $H<1$. Since both $A$ and $%
B $ converge to 0 it follows that 
\begin{equation*}
\mathbb{E}\left[ \left( \left( \frac{1}{n}\right) ^{1-2H}\sum_{i=0}^{n-1}\left(
W_{t_{i+1}}^{H}-W_{t_{i}}^{H}\right) ^{2}-\frac{\left[ nt\right] }{n}\right)
^{2}\right]
\end{equation*}%
 is of order at most $\left( \frac{1}{n}\right) ^{4-4H}$ so the result
follows as 
\begin{equation*}
\begin{aligned}
& \mathbb{E}\left[ \left( \left( \frac{1}{n}\right) ^{1-2H}\sum_{i=0}^{n-1}\left(
W_{t_{i+1}}^{H}-W_{t_{i}}^{H}\right) ^{2}-t\right) ^{2}\right] \\
& =\mathbb{E}\left[
\left( \left( \frac{1}{n}\right) ^{1-2H}\sum_{i=0}^{n-1}\left(
W_{t_{i+1}}^{H}-W_{t_{i}}^{H}\right) ^{2}-\frac{\left[ nt\right] }{n}\right)
^{2}\right] \\
& +2\left( t-\frac{\left[ nt\right] }{n}\right) \mathbb{E}\left[ \left( \left( \frac{1%
}{n}\right) ^{1-2H}\sum_{i=0}^{n-1}\left(
W_{t_{i+1}}^{H}-W_{t_{i}}^{H}\right) ^{2}-\frac{\left[ nt\right] }{n}\right) %
\right] +\left( t-\frac{\left[ nt\right] }{n}\right) ^{2} \\
&\leq c\left[ \left( \frac{1}{n}\right) ^{4-4H}+\left( \frac{1}{n}\right)
\left( \frac{1}{n}\right) ^{2-2H}+\left( \frac{1}{n}\right) ^{2}\right] \leq
3c\left( \frac{1}{n}\right) ^{4-4H}.
\end{aligned}
\end{equation*}

The importance of these results lies not in identifying the exact value of
the limiting quantity, but in the fact that a limit exists at all and that
its dependence on the time scale is entirely governed by the Hurst
parameter. This scale-invariant behavior is the key mechanism behind the
construction of the estimator: by comparing quadratic variations computed at
different resolutions, the unknown limiting constant cancels out, leaving an
expression that depends only on the Hurst parameter.

This observation explains why the same estimation argument extends
seamlessly to the full stochastic partial differential equation studied
earlier. When the solution of the SPDE is tested against a smooth spatial
function, the resulting time-dependent scalar process inherits the same
small--scale behavior as fractional Brownian motion, up to terms that are
smoother in time. The nonlinear drift and the specific structure of the
noise only affect these smoother contributions, which become negligible
after rescaling. Consequently, the estimator remains sensitive only to the
fractional noise component, and the probabilistic scaling argument developed
for fractional Brownian motion applies without modification.

In this sense, the proposition and its corollary provide the basis for the
entire estimation procedure: they isolate the universal scaling property
that drives the estimator and explain why the method is insensitive to the
complexity of the underlying SPDE.

The next result extends the scaling properties of fractional Brownian motion
itself to the class of processes obtained by integrating deterministic
functions against the fractional Brownian motion. Crucially, the convergence
does not depend on any special structure of the integrand beyond its
temporal regularity. The limit captures only the averaged energy of the
integrand and is insensitive to finer details.

\begin{proposition}
Let $x:\left[ 0,t\right] \rightarrow \mathbb{R}$ be a $\gamma $-H\"older
continuous function $x\in C^{\gamma }([0,t],\mathbb{R})$, for any $\frac{1}{2%
}<\gamma <H$\footnote{%
In other words, $x$ has the same H\"older continuity property as the
fractional Brownian motion $W^{H}$.}. Then%
\begin{equation*}
\int_{0}^{t}x_{s}dW_{s}^{H}
\end{equation*}%
is well defined as a Young integral, as well as any of the integrals on
sub-intervals of $\left[ 0,t\right] $. Then, for a dyadic partition $%
(t_{i})_{i\geq 0}$ of the positive half-line $[0,\infty )$ with $%
t_{i+1}-t_{i}=\frac{1}{2^{k}}$ we have that, $P$-almost surely 
\begin{equation}
\lim_{k\rightarrow \infty }\left( \frac{1}{2^{k}}\right) ^{1-2H}\sum_{i\in 
\mathcal{A}_{t}^{2^{k}}}\left( \int_{t_{i}}^{t_{i+1}}x_{s}dW_{s}^{H}\right)
^{2}=\int_{0}^{t}x_{s}^{2}ds.  \label{step2}
\end{equation}
\end{proposition}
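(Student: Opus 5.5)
Well-definedness is immediate from the classical Young theory: $x\in C^\gamma$ and $W^H\in C^{\gamma'}$ for every $\gamma'<H$, and $\gamma+\gamma'>1$. This gives the Love--Young estimate on each subinterval
\begin{equation*}
\Bigl|\int_{t_i}^{t_{i+1}}x_s\,dW^H_s - x_{t_i}\bigl(W^H_{t_{i+1}}-W^H_{t_i}\bigr)\Bigr|\le C\,\|x\|_{\gamma}\,K_W^{\gamma'}\,|t_{i+1}-t_i|^{\gamma+\gamma'}.
\end{equation*}
This could also be obtained from a scalar version of the dyadic argument in Theorem \ref{s}, with $S\equiv I$.

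The plan is to compare $\Delta_i:=\int_{t_i}^{t_{i+1}}x_s\,dW^H_s$ with the frozen increment $\delta_i:=x_{t_i}\,\delta W_i$, where $\delta W_i=W^H_{t_{i+1}}-W^H_{t_i}$. We write $\Delta_i=\delta_i+\rho_i$ with $|\rho_i|\lesssim 2^{-k(\gamma+\gamma')}$. Then
\begin{equation*}
2^{-k(1-2H)}\sum_i\Delta_i^2 = 2^{-k(1-2H)}\sum_i\delta_i^2 + 2^{-k(1-2H)}\sum_i\bigl(2\delta_i\rho_i+\rho_i^2\bigr).
\end{equation*}
The error term is bounded by $2^{-k(1-2H)}\cdot 2^k\cdot\bigl(2^{-k(\gamma'+\gamma+\gamma')}+2^{-2k(\gamma+\gamma')}\bigr)$. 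Choosing $\gamma'$ close to $H$, its exponent is $2H-\gamma-2\gamma'<0$ because $\gamma>1/2$ and $\gamma'\approx H$. So the error tends to zero pathwise. This is the only place the assumptions $\gamma>1/2$ and $\gamma'$ near $H$ enter.

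It remains to show $2^{-k(1-2H)}\sum_i x_{t_i}^2(\delta W_i)^2\to\int_0^t x_s^2\,ds$ almost surely. The measure viewpoint does this. Define random measures $\mu_k:=2^{-k(1-2H)}\sum_{i\in\mathcal A^{2^k}_t}(\delta W_i)^2\,\delta_{t_i}$ on $[0,t]$. By Corollary \ref{scaled:cov}, applied simultaneously to all dyadic rationals $r\in[0,t]$ (a countable family, so on a single full-measure event), $\mu_k([0,r])\to r$ for every dyadic $r$. Since $\mu_k$ has nonnegative weights and the limit distribution function is continuous, monotonicity yields $\mu_k([0,r])\to r$ for every $r\in[0,t]$, hence $\mu_k\Rightarrow$ Lebesgue measure on $[0,t]$ weakly. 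Since $x^2$ is continuous, $\int x^2\,d\mu_k\to\int_0^t x_s^2\,ds$. Because $x$ is deterministic (or at least fixed pathwise), no independence between $x$ and $W^H$ is needed. This gives \eqref{step2}.

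The main obstacle I expect is the null-set bookkeeping in Corollary \ref{scaled:cov}. We need the a.s. convergence for all $r$ at once, and the partition intervals straddling $r$ must be handled. The countable-dense-set plus monotonicity argument above handles both, since a single boundary term is $O(2^{-k(1-2H)}2^{-2k\gamma'})\to 0$. The quantitative Hölder remainder uses the pathwise constant $K_W^{\gamma'}$, which is finite a.s. by Kolmogorov's criterion.
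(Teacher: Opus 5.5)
Your proof is correct and follows the paper's route. Both use the frozen-increment decomposition $\int_{t_i}^{t_{i+1}}x_s\,dW^H_s=x_{t_i}\delta W_i+\rho_i$ with Young remainder bounds, and then handle the main term by Corollary \ref{scaled:cov} on dyadic subintervals together with continuity of $x^2$; your weak convergence $\mu_k\Rightarrow$ Lebesgue is a compact repackaging of the paper's coarse/fine ($2^{-n}$ versus $2^{-(n+m)}$) partition argument.

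Your version adds two small refinements:
\begin{itemize}
\item Decoupling the H\"older exponents of $x$ and $W^H$ means you only need $\gamma+2\gamma'>2H$, rather than the paper's $3\gamma>2H$ with $\gamma$ pushed towards $H$.
\item Fixing a single null set for all dyadic $r$ makes explicit that the limit holds simultaneously for every continuous integrand. This includes random ones such as $x_s=(\omega_s,\xi_s\cdot\nabla\varphi)$, which the paper uses later.
\end{itemize}
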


\noindent \textbf{Proof.} Since, $\mathbb{P}$-almost surely 
\begin{equation*}
\lim_{k\rightarrow \infty }\int_{\frac{\left[ t2^{k}\right] }{2^{k}}%
}^{t}x_{s}^{2}ds=0,
\end{equation*}%
it is enough to prove that $P-$almost surely 
\begin{equation*}
\lim_{k\rightarrow \infty }\sum_{i\in \mathcal{A}_{t}^{2^{k}}}\left( \frac{1%
}{2^{k}}\right) ^{1-2H}\left( \int_{t_{i}}^{t_{i+1}}x_{s}dW_{s}^{H}\right)
^{2}-\int_{t_{i}}^{t_{i+1}}x_{s}^{2}ds=0.
\end{equation*}%
Since $x:\left[ 0,t\right] \rightarrow \mathbb{R}$ is $\gamma $-Holder
continuous 
\begin{equation*}
\left\vert \sum_{i\in \mathcal{A}_{t}^{2^{k}}}%
\int_{t_{i}}^{t_{i+1}}x_{s}^{2}ds-x_{t_{j}}^{2}\left( t_{j+1}-t_{j}\right)
\right\vert =\int_{0}^{\frac{\left[ t2^{k}\right] }{2^{k}}}\left\vert
x_{s}^{2}-x_{\frac{\left[ s2^{k}\right] }{2^{k}}}^{2}\right\vert ds\leq
Ct\left( \frac{1}{2^{k}}\right) ^{\gamma }.
\end{equation*}%
Next we can estimate the real-valued Young integral as 
\begin{equation*}
\left\vert \int_{t_{i}}^{t_{i+1}}x_{s}dW_{s}^{H}-x_{t_{i}}\left(
W_{t_{i+1}}^{H}-W_{t_{i}}^{H}\right) \right\vert \leq C\left( \frac{1}{2^{k}}%
\right) ^{2\gamma }.
\end{equation*}%
So 
\begin{eqnarray*}
&&\sum_{i\in \mathcal{A}_{t}^{2^{k}}}\left( \frac{1}{2^{k}}\right)
^{1-2H}\left( \int_{t_{i}}^{t_{i+1}}x_{s}dW_{s}^{H}\right) ^{2} \\
&=&A_{k}+B_{k}+\left( \frac{1}{2^{k}}\right) ^{1-2H}\sum_{i\in \mathcal{A}%
_{t}^{2^{k}}}\left\vert x_{t_{i}}\left( W_{t_{i+1}}^{H}-W_{t_{i}}^{H}\right)
\right\vert ^{2},
\end{eqnarray*}%
where%
\begin{eqnarray*}
\left\vert A_{k}\right\vert &=&\sum_{i\in \mathcal{A}_{t}^{2^{k}}}\left( 
\frac{1}{2^{k}}\right) ^{1-2H}\left\vert
\int_{t_{i}}^{t_{i+1}}x_{s}dW_{s}^{H}-x_{t_{i}}\left(
W_{t_{i+1}}^{H}-W_{t_{i}}^{H}\right) \right\vert ^{2} \\
&\leq &C_{t}2^{k}\left( \frac{1}{2^{k}}\right) ^{1-2H}C\left( \frac{1}{2^{k}}%
\right) ^{4\gamma }=C_{t}\left( \frac{1}{2^{k}}\right) ^{4\gamma -2H} \\
\left\vert B_{k}\right\vert &=&\sum_{i\in \mathcal{A}_{t}^{2^{k}}}\left( 
\frac{1}{2^{k}}\right) ^{1-2H}2\left\vert \left(
\int_{t_{i}}^{t_{i+1}}x_{s}dW_{s}^{H}-x_{t_{i}}\left(
W_{t_{i+1}}^{H}-W_{t_{i}}^{H}\right) \right) \right\vert \left\vert
x_{t_{i}}\left( W_{t_{i+1}}^{H}-W_{t_{i}}^{H}\right) \right\vert \\
&\leq &C_{t}2^{k}\left( \frac{1}{2^{k}}\right) ^{1-2H}C\left( \frac{1}{2^{k}}%
\right) ^{3\gamma }=C_{t}\left( \frac{1}{2^{k}}\right) ^{3\gamma -2H}.
\end{eqnarray*}

By choosing $\gamma $ sufficiently close to $H$, one can deduce that both $%
A_{k}$ and $B_{k}$ vanish as $k$ tends to $\infty $. So it is enough to
prove that 
\begin{eqnarray}
\lim_{k\rightarrow \infty }\sum_{i\in \mathcal{A}_{t}^{2^{k}}}\left( \frac{1%
}{2^{k}}\right) ^{1-2H}x_{t_{i}}^{2}\left(
W_{t_{i+1}}^{H}-W_{t_{i}}^{H}\right) ^{2}-x_{t_{i}}^{2}\left( \frac{1}{2^{k}}%
\right) &=&0  \label{toprove} \\
\lim_{k\rightarrow \infty }\left( \frac{1}{2^{k}}\right) ^{1-2H}\sum_{i\in 
\mathcal{A}_{t}^{2^{k}}}x_{t_{i}}^{2}\left( \left(
W_{t_{i+1}}^{H}-W_{t_{i}}^{H}\right) ^{2}-\left(  \frac{1}{2^{k}}%
 \right) ^{2H}\right) &=&0 . \notag
\end{eqnarray}%
Choose a sufficiently fine partition $(t_{j}^{\varepsilon })_{j\geq 0}$ of
the positive half-line $[0,\infty )$, with $t_{0}=0$ and $t_{i+1}-t_{i}=%
\frac{j}{2^{n}}$ such that inside each interval $\left[ t_{j}^{\varepsilon
},t_{j+1}^{\varepsilon }\right] $ 
\begin{equation*}
\left\vert \left( x_{t_{j}^{\varepsilon }}\right) ^{2}-\left( x_{s}\right)
^{2}\right\vert \leq \varepsilon,~~s\in \left[ t_{j}^{\varepsilon
},t_{j+1}^{\varepsilon }\right].
\end{equation*}%
For any partition $\left\{ \frac{j}{2^{n+m}},j=0,.....\right\} $ more
refined that the partition $\left\{ \frac{j}{2^{n}},j=0,.....\right\} $, we
will decompose the sum 
\begin{eqnarray}
&&\sum_{j\in \mathcal{A}_{t}^{2^{n+m}}}\left( \frac{1}{2^{n+m}}\right)
^{1-2H}x_{\frac{j}{2^{n+m}}}^{2}\left( W_{\frac{j+1}{2^{n+m}}}^{H}-W_{\frac{j%
}{2^{n+m}}}^{H}\right) ^{2}  \notag \\
&=&\sum_{j^{\prime }\in \mathcal{A}_{t}^{2^{n+m}}}\sum_{\left\{ j,\frac{%
j^{\prime }2^{m}}{2^{n+m}}\leq \frac{j}{2^{n+m}}<\frac{\left( j^{\prime
}+1\right) 2^{m}}{2^{n+m}}\right\} }\left( \frac{1}{2^{n+m}}\right)
^{1-2H}x_{\frac{j}{2^{n+m}}}^{2}\left( W_{\frac{j+1}{2^{n+m}}}^{H}-W_{\frac{j%
}{2^{n+m}}}^{H}\right) ^{2}  \notag \\
&&+\sum_{\left\{ j\geq \left[ t2^{n}\right] 2^{m}\right\} }\left( \frac{1}{%
2^{n+m}}\right) ^{1-2H}x_{\frac{j}{2^{n+m}}}^{2}\left( W_{\frac{j+1}{2^{n+m}}%
}^{H}-W_{\frac{j}{2^{n+m}}}^{H}\right) ^{2}  .\label{keys}
\end{eqnarray}%
Let us control the last term first. We note that 
\begin{eqnarray*}
&&\sum_{\left\{ j\geq \left[ t2^{n}\right] 2^{m}\right\} }\left( \frac{1}{%
2^{n+m}}\right) ^{1-2H}x_{\frac{j}{2^{n+m}}}^{2}\left( W_{\frac{j+1}{2^{n+m}}%
}^{H}-W_{\frac{j}{2^{n+m}}}^{H}\right) ^{2} \\
&\leq &C\lim \sup_{m\rightarrow \infty }\sum_{\left\{ j\geq \left[ t2^{n}%
\right] 2^{m}\right\} }\left( \frac{1}{2^{n+m}}\right) ^{1-2H}\left( W_{%
\frac{j+1}{2^{n+m}}}^{H}-W_{\frac{j}{2^{n+m}}}^{H}\right) ^{2} \\
&=&C\lim \sup_{m\rightarrow \infty }\int_{\frac{\left[ t2^{n}\right] }{2^{n}}%
}^{t}ds=C\left( t-\frac{\left[ t2^{n}\right] }{2^{n}}\right),
\end{eqnarray*}%
where we applied Corollary \ref{scaled:cov} in the last step.
Now, this term can be chosen small enough by choosing $n$ sufficiently
large. For the first term on the right hand side of (\ref{keys}) we take the
difference%
\begin{eqnarray*}
&&\sum_{j^{\prime }\in \mathcal{A}_{t}^{2^{n+m}}}\sum_{\left\{ j,\frac{%
j^{\prime }2^{m}}{2^{n+m}}\leq \frac{j}{2^{n+m}}<\frac{\left( j^{\prime
}+1\right) 2^{m}}{2^{n+m}}\right\} }\left( \frac{1}{2^{n+m}}\right)
^{1-2H}x_{\frac{j}{2^{n+m}}}^{2}\left( W_{\frac{j+1}{2^{n+m}}}^{H}-W_{\frac{j%
}{2^{n+m}}}^{H}\right) ^{2}- \\
&&-\sum_{j^{\prime }\in \mathcal{A}_{t}^{2^{n+m}}}\sum_{\left\{ j,\frac{%
j^{\prime }2^{m}}{2^{n+m}}\leq \frac{j}{2^{n+m}}<\frac{\left( j^{\prime
}+1\right) 2^{m}}{2^{n+m}}\right\} }\left( \frac{1}{2^{n+m}}\right)
^{1-2H}x_{\frac{j^{\prime }}{2^{n}}}^{2}\left( W_{\frac{j+1}{2^{n+m}}%
}^{H}-W_{\frac{j}{2^{n+m}}}^{H}\right) ^{2}.
\end{eqnarray*}

By the choice of the partition, since $j$ is such that $\frac{j^{\prime
}2^{m}}{2^{n+m}}\leq \frac{j}{2^{n+m}}<\frac{\left( j^{\prime }+1\right)
2^{m}}{2^{n+m}}$ we deduce that 
\begin{eqnarray*}
&& \left\vert \sum_{\left\{ j,\frac{j^{\prime }2^{m}}{2^{n+m}}\leq \frac{j}{%
2^{n+m}}<\frac{\left( j^{\prime }+1\right) 2^{m}}{2^{n+m}}\right\} }\left( 
\frac{1}{2^{n+m}}\right) ^{1-2H}\left( x_{\frac{j}{2^{n+m}}}^{2}-x_{\frac{%
j^{\prime }}{2^{n}}}^{2}\right) \left( W_{\frac{j+1}{2^{n+m}}}^{H}-W_{\frac{j%
}{2^{n+m}}}^{H}\right) ^{2}\right\vert \\
&& \leq \varepsilon \left\vert \sum_{\left\{ j,\frac{j^{\prime }2^{m}}{%
2^{n+m}} \leq \frac{j}{2^{n+m}}<\frac{\left( j^{\prime }+1\right) 2^{m}}{%
2^{n+m}}\right\} }\left( \frac{1}{2^{n+m}}\right) ^{1-2H}\left( W_{\frac{j+1%
}{2^{n+m}}}^{H}-W_{\frac{j}{2^{n+m}}}^{H}\right) ^{2}\right\vert.
\end{eqnarray*}%
So the above difference can be controlled by 
\begin{equation*}
\varepsilon \sum_{j^{\prime }\in \mathcal{A}_{t}^{2^{n+m}}}\sum_{\left\{ j,%
\frac{j^{\prime }2^{m}}{2^{n+m}}\leq \frac{j}{2^{n+m}}<\frac{\left(
j^{\prime }+1\right) 2^{m}}{2^{n+m}}\right\} }\left( \frac{1}{2^{n+m}}%
\right) ^{1-2H}\left( W_{\frac{j+1}{2^{n+m}}}^{H}-W_{\frac{j}{2^{n+m}}%
}^{H}\right) ^{2}.
\end{equation*}

Note that 
\begin{eqnarray*}
&&\lim_{m\rightarrow \infty }\sum_{\left\{ j,\frac{j^{\prime }2^{m}}{2^{n+m}}%
\leq \frac{j}{2^{n+m}}<\frac{\left( j^{\prime }+1\right) 2^{m}}{2^{n+m}}%
\right\} }\left( \frac{1}{2^{n+m}}\right) ^{1-2H}x_{\frac{j^{\prime }}{2^{n}}%
}^{2}\left( W_{\frac{j+1}{2^{n+m}}}^{H}-W_{\frac{j}{2^{n+m}}}^{H}\right) ^{2}
\\
&=&x_{\frac{j^{\prime }}{2^{n}}}^{2}\lim_{m\rightarrow \infty }\sum_{\left\{
j,\frac{j^{\prime }2^{m}}{2^{n+m}}\leq \frac{j}{2^{n+m}}<\frac{\left(
j^{\prime }+1\right) 2^{m}}{2^{n+m}}\right\} }\left( \frac{1}{2^{n+m}}%
\right) ^{1-2H}\left( W_{\frac{j+1}{2^{n+m}}}^{H}-W_{\frac{j}{2^{n+m}}%
}^{H}\right) ^{2} \\
&=&x_{\frac{j^{\prime }}{2^{n}}}^{2}\int_{\frac{j^{\prime }2^{m}}{2^{n+m}}}^{%
\frac{\left( j^{\prime }+1\right) 2^{m}}{2^{n+m}}}ds=x_{\frac{j^{\prime }}{%
2^{n}}}^{2}\frac{1}{2^{n}}
\end{eqnarray*}%
and therefore 
\begin{equation*}
\sum_{j^{\prime }\in \mathcal{A}_{t}^{2^{n+m}}}\sum_{\left\{ j,\frac{%
j^{\prime }2^{m}}{2^{n+m}}\leq \frac{j}{2^{n+m}}<\frac{\left( j^{\prime
}+1\right) 2^{m}}{2^{n+m}}\right\} }\left( \frac{1}{2^{n+m}}\right)
^{1-2H}x_{\frac{j^{\prime }}{2^{n}}}^{2}\left( W_{\frac{j+1}{2^{n+m}}%
}^{H}-W_{\frac{j}{2^{n+m}}}^{H}\right) ^{2}
\end{equation*}%
can be chosen as close to 
\begin{equation*}
\sum_{\left\{ j^{\prime },\left[ \frac{j^{\prime }}{2^{n}},\frac{j^{\prime
}+1}{2^{n}}\right] \subset \left[ 0,t\right] \right\} }x_{\frac{j^{\prime }}{%
2^{n}}}^{2}\frac{1}{2^{n}}
\end{equation*}%
by using a sufficiently large $m$ which in turn can be chosen as close to $%
\int_{0}^{t}x_{s}^{2}ds$ by choosing a sufficiently large $n$. We deduce
from here that (\ref{toprove}) is true and, indeed, (\ref{step2}) holds
true. 

As a next step, we consider the process 
\begin{equation*}
y_{t}=y_{0}+\int_{0}^{t}a_{s}ds+\int_{0}^{t}x_{s}dW_{s}^{H}
\end{equation*}%
where $a$ is bounded and continuous on the interval $\left[ 0,t\right] $ and 
$x:\left[ 0,t\right] \rightarrow \mathbb{R}$ be a $\gamma $-Holder
continuous function $x\in C^{\gamma }([0,t],\mathbb{R})$, for any $\frac{1}{2%
}<\gamma <H.$ The following result shows that the (rescaled) quadratic
variation result extends from pure stochastic integrals to general processes
that combine drift and fractional noise. \ The process under consideration
is deliberately chosen to mirror the structure of the scalar processes
obtained by testing the full SPDE against a smooth spatial function: it
consists of a deterministic initial value, a time-integrated drift term, and
a stochastic integral driven by fractional Brownian motion. The key message
of the next proposition is that, when the process is observed at
sufficiently fine time scales, the contribution of the drift becomes
asymptotically negligible in the rescaled quadratic variation. Although the
drift may influence the macroscopic behavior of the process, it does not
affect the small scale fluctuations that determine the scaling law. As a
result, the rescaled sum of squared increments of the full process converges
almost surely to the same limit as that of the stochastic integral alone.
From the point of view of the estimation programme, this is an important
step. It shows that the quadratic variation asymptotics are entirely
governed by the fractional noise component, even in the presence of
additional deterministic dynamics. In particular, the result confirms that
the presence of lower-order terms does not interfere with the extraction of
the Hurst parameter, provided they are sufficiently regular in time.

This proposition generalizes earlier results in the literature by allowing
for an arbitrary bounded and continuous drift and a time-dependent integrand
in the noise term. Importantly, the proof does not rely on any special
structure of these terms beyond regularity. This universality is exactly
what is needed for applications to nonlinear stochastic partial differential
equations, where both the drift and the noise coefficient typically depend
on the solution itself.

\begin{proposition}\label{quadratic:sde}
For a dyadic partition $(t_{i})_{i\geq 0}$ of the positive half-line $%
[0,\infty )$ with $t_{i+1}-t_{i}=\frac{1}{2^{k}}$ we have that, $P$-almost
surely 
\begin{equation*}
\lim_{k\rightarrow \infty }\left( \frac{1}{2^{k}}\right) ^{1-2H}\sum_{i\in 
\mathcal{A}_{t}^{2^{k}}}\left( y_{t_{i+1}}-y_{t_{i}}\right)
^{2}=\lim_{k\rightarrow \infty }\left( \frac{1}{2^{k}}\right)
^{1-2H}\sum_{i\in \mathcal{A}_{t}^{2^{k}}}\left(
\int_{t_{i}}^{t_{i+1}}x_{s}dW_{s}^{H}\right) ^{2}=\int_{0}^{t}x_{s}^{2}ds.
\end{equation*}
\end{proposition}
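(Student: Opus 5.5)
The plan is to split each increment of $y$ into its drift part and its noise part, and show that after rescaling the drift and the cross terms vanish pathwise. The second equality in the statement is exactly the preceding proposition (equation \eqref{step2}), so only the first equality needs an argument. Write
\[
y_{t_{i+1}}-y_{t_i}=D_i+N_i,\qquad D_i:=\int_{t_i}^{t_{i+1}}a_s\,ds,\qquad N_i:=\int_{t_i}^{t_{i+1}}x_s\,dW^H_s .
\]
Expanding the square gives
\[
\Big(\tfrac{1}{2^k}\Big)^{1-2H}\sum_{i\in\mathcal{A}^{2^k}_t}(D_i+N_i)^2
=\Big(\tfrac{1}{2^k}\Big)^{1-2H}\sum_i N_i^2+\Big(\tfrac{1}{2^k}\Big)^{1-2H}\sum_i D_i^2+2\Big(\tfrac{1}{2^k}\Big)^{1-2H}\sum_i D_iN_i .
\]
It therefore suffices to show that the last two sums tend to $0$ almost surely.

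For the drift term, boundedness of $a$ gives $|D_i|\le \|a\|_\infty 2^{-k}$. Since there are at most $t2^k$ indices,
\[
\Big(\tfrac{1}{2^k}\Big)^{1-2H}\sum_i D_i^2\le t\|a\|_\infty^2\, 2^{-k(2-2H)},
\]
which tends to $0$ because $H<1$.

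For the cross term I would use Cauchy--Schwarz rather than pathwise bounds on each $N_i$:
\[
\Big|\Big(\tfrac{1}{2^k}\Big)^{1-2H}\sum_i D_iN_i\Big|\le\Big(\Big(\tfrac{1}{2^k}\Big)^{1-2H}\sum_i D_i^2\Big)^{1/2}\Big(\Big(\tfrac{1}{2^k}\Big)^{1-2H}\sum_i N_i^2\Big)^{1/2}.
\]
The first factor tends to $0$ by the drift estimate. The second factor converges almost surely to $(\int_0^t x_s^2ds)^{1/2}<\infty$ by the previous proposition, so it is bounded along the dyadic sequence. Hence the product tends to $0$ on the same full-measure event.

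The estimates are mostly routine. The one point needing care is that the cross term must be controlled without a pathwise bound on $N_i$. A direct Young bound, $|N_i|\lesssim 2^{-k\gamma}$, already suffices here: it gives order $2^{-k(\gamma+1-2H)}$ with $\gamma+1-2H>0$ once $\gamma$ is close to $H<1$. Still, Cauchy--Schwarz together with the already established almost sure limit is cleaner, and it uses nothing beyond boundedness of $a$.

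A final remark concerns the null sets. All limits are taken along the single dyadic sequence, and the only probabilistic input is the almost sure convergence from the previous proposition, which holds on one event of full measure. The conclusion therefore holds $P$-almost surely with no further union bound.
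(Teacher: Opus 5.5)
Your proof is correct and follows essentially the same route as the paper. The paper also splits each increment into drift and noise parts, kills the squared drift term through the extra factor $2^{-k(2-2H)}$, handles the cross term by Cauchy--Schwarz against the almost surely convergent rescaled noise sum, and invokes \eqref{step2} for the noise term. The only small difference is that you bound the drift sum directly by $\|a\|_\infty$, whereas the paper passes through the Riemann-sum limit $2^{k}\sum_i(\int_{t_i}^{t_{i+1}}a_s\,ds)^2\to\int_0^t a_s^2\,ds$, so your version uses only boundedness of $a$ and not its continuity.
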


\noindent \textbf{Proof.} We have the following: 
\begin{eqnarray*}
\lim_{k\rightarrow \infty }\left( \frac{1}{2^{k}}\right) ^{-1}\sum_{i\in 
\mathcal{A}_{t}^{2^{k}}}\left( \int_{t_{i}}^{t_{i+1}}a_{s}ds\right) ^{2}
&=&\int_{0}^{t}a_{s}^{2}ds \\
\lim_{k\rightarrow \infty }\left( \frac{1}{2^{k}}\right) ^{1-2H}\sum_{i\in 
\mathcal{A}_{t}^{2^{k}}}\left( \int_{t_{i}}^{t_{i+1}}x_{s}dW_{s}^{H}\right)
^{2} &=&\int_{0}^{t}x_{s}^{2}ds.
\end{eqnarray*}%
Then 
\begin{equation*}
\lim_{k\rightarrow \infty }\left( \frac{1}{2^{k}}\right) ^{1-2H}\sum_{i\in 
\mathcal{A}_{t}^{2^{k}}}\left( y_{t_{i+1}}-y_{t_{i}}\right) ^{2}=A+B+C,
\end{equation*}%
where 
\begin{eqnarray*}
A &:=&\lim_{k\rightarrow \infty }\left( \frac{1}{2^{k}}\right)
^{1-2H}\sum_{i\in \mathcal{A}_{t}^{2^{k}}}\left(
\int_{t_{i}}^{t_{i+1}}a_{s}ds\right) ^{2}=\lim_{k\rightarrow \infty }\left( 
\frac{1}{2^{k}}\right) ^{2-2H}\lim_{k\rightarrow \infty }\left( \frac{1}{%
2^{k}}\right) ^{-1}\sum_{i\in \mathcal{A}_{t}^{2^{k}}}\left(
\int_{t_{i}}^{t_{i+1}}a_{s}ds\right) ^{2}\\
&=& 0\times \int_{0}^{t}a_{s}^{2}ds \\
C &:=&\lim_{k\rightarrow \infty }\left( \frac{1}{2^{k}}\right)
^{1-2H}\sum_{i\in \mathcal{A}_{t}^{2^{k}}}\left(
\int_{t_{i}}^{t_{i+1}}x_{s}dW_{s}^{H}\right) ^{2}=\int_{0}^{t}x_{s}^{2}ds \\
B &:=&\lim_{k\rightarrow \infty }\left( \frac{1}{2^{k}}\right)
^{1-2H}\sum_{i\in \mathcal{A}_{t}^{2^{k}}}\left(
\int_{t_{i}}^{t_{i+1}}a_{s}ds\right) \left(
\int_{t_{i}}^{t_{i+1}}x_{s}dW_{s}^{H}\right).
\end{eqnarray*}%
Note that 
\begin{eqnarray*}
&&\left( \frac{1}{2^{k}}\right) ^{1-2H}\left\vert \sum_{i\in \mathcal{A}%
_{t}^{2^{k}}}\left( \int_{t_{i}}^{t_{i+1}}a_{s}ds\right) \left(
\int_{t_{i}}^{t_{i+1}}x_{s}dW_{s}^{H}\right) \right\vert \\
&\leq &\left( \frac{1}{2^{k}}\right) ^{1-H}\sqrt{\left( \frac{1}{2^{k}}\right)
^{-1}\sum_{i=0}^{2^{k}-1}\left( \int_{t_{i}}^{t_{i+1}}a_{s}ds\right) ^{2}\left( 
\frac{1}{2^{k}}\right) ^{1-2H}\sum_{i=0}^{2^{k}-1}\left(
\int_{t_{i}}^{t_{i+1}}x_{s}dW_{s}^{H}\right) ^{2}}\\
&\rightarrow& 0\times \sqrt{%
\int_{0}^{t}a_{s}^{2}ds\int_{0}^{t}x_{s}^{2}ds}=0
\end{eqnarray*}%
and it follows that also $B=0$. 

We are ready now to apply the result to our framework, i.e.~to~\eqref{eq2}. To this aim we choose a
smooth test function $\varphi,$ for example an exponential function 
\begin{equation*}
\varphi \left( x\right) =\exp \left( ik\cdot x\right), ~~  x \in \mathbb{T}^2
\end{equation*}%
and we have using the weak formulation (recall Theorem \ref{thm:mild}) that 
\begin{equation*}
\left( \omega _{t},\varphi \right) =\left( \omega _{0},\varphi \right)
+\int_{0}^{t}\left( \omega _{s},u_{s}\cdot \nabla \varphi +\Delta \varphi
\right) ds+\int_{0}^{t}\left( \omega _{s},\xi _{s}\cdot \nabla \varphi
\right) dW_{s}^{H}
\end{equation*}%
where 
\begin{equation*}
s\rightarrow a_{s}:=\left( \omega _{s},u_{s}\cdot \nabla \varphi +\Delta
\varphi \right)
\end{equation*}%
is bounded and continuous on $\left[ 0,t\right] $ and 
\begin{equation*}
s\rightarrow x_{s}:=\left( \omega _{s},\xi _{s}\cdot \nabla \varphi \right)
\end{equation*}%
is $\gamma $-Holder continuous function for any $\frac{1}{2}<\gamma <H$. Now, Proposition \ref{quadratic:sde} gives us the following immediate corollary:

\begin{corollary}
For a dyadic partition $(t_{j})_{j\geq 0}$ of the positive half-line $%
[0,\infty )$ with $t_{j+1}-t_{j}=\frac{1}{2^{k}}$ we have that, $P$-almost
surely 
\begin{equation*}
\lim_{k\rightarrow \infty }\left( \frac{1}{2^{k}}\right) ^{1-2H}\sum_{j\in 
\mathcal{A}_{t}^{2^{k}}}\left( \left( \omega _{t_{j+1}},\varphi \right)
-\left( \omega _{t_{j}},\varphi \right) \right) ^{2}=\int_{0}^{t}\left(
\omega _{s},\xi _{s}\cdot \nabla \varphi \right) ^{2}ds.
\end{equation*}
\end{corollary}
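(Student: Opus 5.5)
The plan is to reduce the corollary to Proposition \ref{quadratic:sde} applied pathwise to the scalar process $y_t := (\omega_t,\varphi)$.

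\textbf{Step 1: the decomposition.} By the equivalence of mild and weak solutions established above (applied to the mild solution from Theorem \ref{thm:mild} on its existence interval), $y$ admits the decomposition
\begin{equation*}
y_t = y_0 + \int_0^t a_s\,ds + \int_0^t x_s\,dW^H_s,
\end{equation*}
with $a_s := (\omega_s, u_s\cdot\nabla\varphi + \Delta\varphi)$ and $x_s := (\omega_s,\xi\cdot\nabla\varphi)$. It then suffices to verify the two regularity hypotheses of Proposition \ref{quadratic:sde}: that $a$ is bounded and continuous, and that $x\in C^{\gamma}([0,t],\mathbb{R})$ for some $\gamma\in(1/2,H)$. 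The conclusion of that proposition then gives exactly the stated limit, with $\int_0^t x_s^2\,ds = \int_0^t (\omega_s,\xi\cdot\nabla\varphi)^2\,ds$.

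\textbf{Step 2: regularity of $a$ and $x$.} For $x$: since $\varphi$ and $\xi$ are smooth, $\xi\cdot\nabla\varphi \in \mathcal{B}_\infty \subset \mathcal{B}^*_{\alpha-\gamma}$. Hence
\begin{equation*}
|x_t - x_s| \le \|\omega_t-\omega_s\|_{\alpha-\gamma}\,\|\xi\cdot\nabla\varphi\|_{\mathcal{B}^*_{\alpha-\gamma}} \lesssim \|\omega\|_{\gamma,\alpha-\gamma}\,|t-s|^\gamma ,
\end{equation*}
using $\omega\in V^T$. Since the fixed point in Theorem \ref{thm:mild} can be carried out for any $\gamma<H$, one may take $\gamma$ arbitrarily close to $H$, as Proposition \ref{quadratic:sde} requires.

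For $a$: the linear part $(\omega_s,\Delta\varphi)$ is continuous because $\omega\in C([0,T],\mathcal{B}_\alpha)$. The nonlinear part $(\omega_s, u_s\cdot\nabla\varphi)$ is bounded by $\|\omega_s\|_{L^2}\,\|u_s\|_{L^\infty}\,\|\nabla\varphi\|_{L^\infty}$. Here $\|u_s\|_{L^\infty}\lesssim \|\omega_s\|_{\alpha}$ by the Biot--Savart bounds and the Sobolev embedding for $\alpha>d/2$. Continuity in $s$ follows from the bilinear estimate, whose difference is bounded by $C(R)\|\omega_s-\omega_r\|_\alpha$ exactly as in Step 1 of Proposition \ref{prop:contraction}. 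So $a$ is bounded and continuous on $[0,t]$.

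\textbf{Step 3: the time horizon.} The observation interval $[0,t]$ must lie within the existence interval of the solution. Since the estimator is a local quantity, I would either assume $t\le T$ or restrict the statement to such $t$; all the above bounds then hold pathwise, for almost every realization of $W^H$.

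I expect the main subtlety to lie not in this corollary itself but in how Proposition \ref{quadratic:sde} handles randomness. Its proof through Corollary \ref{scaled:cov} treats $x$ as a fixed path, while here $x$ is random and depends on $W^H$. The localization argument in the proof of \eqref{step2} is nevertheless purely pathwise. It uses only that, on a single full-measure event, the rescaled sums of squared fBm increments over every dyadic subinterval converge to that subinterval's length, and that $x$ is continuous. So I would note explicitly that Corollary \ref{scaled:cov} holds simultaneously for all dyadic intervals on one null-set complement, since there are countably many of them. With that, the proposition applies to the random integrand $x(\omega)$, and the corollary follows.
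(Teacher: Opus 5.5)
Your proposal is correct and follows the paper's own argument: the paper likewise writes $(\omega_t,\varphi)$ through the weak formulation with the same $a_s$ and $x_s$, asserts that $a$ is bounded and continuous and that $x$ is H\"older, and then applies Proposition \ref{quadratic:sde}. Your regularity checks, the restriction of $t$ to the (random) existence interval, and the remark that the proof of \eqref{step2} runs pathwise on a single full-measure event covering countably many dyadic intervals all make explicit what the paper leaves implicit.

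The one loose phrase is ``take $\gamma$ close to $H$ via Theorem \ref{thm:mild}'': the existence time there depends on $\gamma$, so this does not directly give the higher H\"older regularity of $x$ on a fixed window $[0,t]$. It is cleaner to apply the weak formulation to the smooth test function $\xi\cdot\nabla\varphi$. This shows $x$ is a drift plus a Young integral against $W^H$, hence $(H-\varepsilon)$-H\"older. Alternatively, note that the bounds on $A_k,B_k$ only need $W^H$, not $x$, to be nearly $H$-H\"older.
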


We can now give the estimator for the Hurst parameter. Following from \cite{Tudor}, we define
define%
\begin{equation}\label{est:h}
H_{k}:=\frac{1}{2}\left( \frac{1}{\log 2}\log \frac{\sum_{j\in \mathcal{A}%
_{t}^{2^{k}}}\left( \left( \omega _{t_{j+1}},\varphi \right) -\left( \omega
_{t_{j}},\varphi \right) \right) ^{2}}{\sum_{j\in \mathcal{A}%
_{t}^{2^{k+1}}}\left( \left( \omega _{t_{j+1}},\varphi \right) -\left(
\omega _{t_{j}},\varphi \right) \right) ^{2}}-1\right).
\end{equation}

\begin{proposition}\label{prop:est:h}
We have that, $P$-almost surely 
\begin{equation*}
\lim_{k\rightarrow \infty }H_{k}=H.
\end{equation*}
\end{proposition}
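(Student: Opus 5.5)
The plan is to reduce the statement to the preceding Corollary, applied at the two consecutive dyadic levels $k$ and $k+1$. Write
\[
Q_k:=\sum_{j\in \mathcal{A}_t^{2^k}}\bigl((\omega_{t_{j+1}},\varphi)-(\omega_{t_j},\varphi)\bigr)^2 ,
\]
where the partition has mesh $2^{-k}$, and set $L:=\int_0^t (\omega_s,\xi_s\cdot\nabla\varphi)^2\,ds$. The Corollary says that, $P$-almost surely,
\[
2^{-k(1-2H)}Q_k\to L \quad\text{and}\quad 2^{-(k+1)(1-2H)}Q_{k+1}\to L .
\]
Both statements hold on a single full-measure event, because each is a countable intersection of almost sure events over $k$.

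Next I would form the ratio. Directly,
\[
\frac{Q_k}{Q_{k+1}}=2^{(1-2H)}\cdot\frac{2^{-k(1-2H)}Q_k}{2^{-(k+1)(1-2H)}Q_{k+1}} ,
\]
so on the event where $L>0$ we get $Q_k/Q_{k+1}\to 2^{1-2H}$. Taking logarithms and dividing by $\log 2$ gives $\frac{1}{\log 2}\log(Q_k/Q_{k+1})\to 1-2H$.

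This limit has the opposite sign to what \eqref{est:h} needs. With $H_k$ as written, $\frac12\bigl((1-2H)-1\bigr)=-H$. The coarse-level sum $Q_k$ has half as many terms, each roughly $2^{2H}$ times larger, so $Q_k/Q_{k+1}\approx 2^{2H-1}$. The precise sign therefore depends on the convention $(1/2^k)^{1-2H}$: $2^{-k(1-2H)}=2^{k(2H-1)}$, and one should check whether the normalisation is $n^{1-2H}$ or $n^{2H-1}$. With the correct scaling, $\sum(\Delta W)^2\approx 2^k\cdot 2^{-2Hk}=2^{k(1-2H)}$, so $Q_k/Q_{k+1}\to 2^{2H-1}$. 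Then $\frac{1}{\log 2}\log(Q_k/Q_{k+1})\to 2H-1$, and $H_k=\frac12\bigl(\frac{1}{\log2}\log\frac{Q_k}{Q_{k+1}}+1\bigr)\to H$. So I would first fix the normalisation consistently with Proposition \ref{prop15}, namely $Q_k\sim 2^{k(1-2H)}L$. The continuity of $\log$ at $2^{2H-1}>0$ then gives the almost sure convergence.

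The main obstacle is non-degeneracy, i.e.\ ensuring $L>0$ almost surely, since otherwise the ratio is $0/0$ in the limit. I would handle this by stating it as a standing requirement: the proposition holds on the event $\{L>0\}$. This event has full probability as soon as $s\mapsto(\omega_s,\xi\cdot\nabla\varphi)$ is not identically zero on $[0,t]$, which one can guarantee, for instance, by choosing $\varphi$ with $(\omega_0,\xi\cdot\nabla\varphi)\neq 0$. Continuity of $x_s$, which follows from $\omega\in V^T$, then gives $L>0$. I would also remark that the case $L=0$ is genuinely excluded, since there the observation carries no information on $H$.
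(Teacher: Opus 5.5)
Your route is the paper's own: normalise the quadratic variations at levels $k$ and $k+1$, use the Corollary following Proposition \ref{quadratic:sde} to get $V_k:=2^{-k(1-2H)}Q_k\to L$, take the ratio, and use continuity of $\log$. In its final form your argument is correct, but the middle misidentifies what goes wrong.

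\textbf{The prefactor.} Your first displayed identity has the wrong prefactor. From $Q_k=2^{k(1-2H)}V_k$ one gets
\[
\frac{Q_k}{Q_{k+1}}=2^{-(1-2H)}\,\frac{V_k}{V_{k+1}}=2^{2H-1}\,\frac{V_k}{V_{k+1}}\longrightarrow 2^{2H-1},
\]
which is exactly the paper's computation. There is no normalisation inconsistency between the Corollary and Proposition \ref{prop15} to be fixed, since the Corollary already says $Q_k\sim 2^{k(1-2H)}L$.

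\textbf{The estimator.} The genuine discrepancy is in \eqref{est:h}. With $\frac{1}{\log 2}\log(Q_k/Q_{k+1})\to 2H-1$, the estimator as written (with $-1$) converges to $\frac12\bigl((2H-1)-1\bigr)=H-1$. The paper's own final line overlooks this. The statement is therefore true for $H_k=\frac12\bigl(\frac{1}{\log 2}\log(Q_k/Q_{k+1})+1\bigr)$, which is what you end up proving. Say explicitly that you are correcting the definition rather than flipping the sign silently. Your remark that ``with $H_k$ as written one gets $-H$'' is also an artefact of the wrong prefactor.

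\textbf{Non-degeneracy.} This point is a real improvement over the paper. The paper divides by $V_{k+1}$ without ever checking that $L=\int_0^t(\omega_s,\xi\cdot\nabla\varphi)^2\,ds$ is nonzero, even though its own discussion says non-degeneracy is what the argument uses. Your sufficient condition closes this gap: $(\omega_0,\xi\cdot\nabla\varphi)\neq 0$, together with continuity of $s\mapsto(\omega_s,\xi\cdot\nabla\varphi)$ (which follows from $\omega\in C([0,T];\cB_\alpha)$), gives $L>0$ pathwise. Then $Q_{k+1}>0$ for all large $k$, so the ratio is eventually well defined. Both your argument and the paper's also tacitly need $t\le T$, the local existence time from Theorem \ref{thm:mild}.
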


\noindent \textbf{Proof.} Define%
\begin{equation*}
V_{k}:=\left( \frac{1}{2^{k}}\right) ^{1-2H}\sum_{j\in \mathcal{A}%
_{t}^{2^{k}}}\left( \left( \omega _{t_{j+1}},\varphi \right) -\left( \omega
_{t_{j}},\varphi \right) \right) ^{2}.
\end{equation*}%
Then from Proposition \ref{quadratic:sde} we deduce that 
\begin{equation*}
\lim_{k\rightarrow \infty }V_{k}=\int_{0}^{t}\left( \omega _{s},\xi
_{s}\cdot \nabla \varphi \right) ^{2}ds.
\end{equation*}%
Then observe that 
\begin{equation*}
\frac{\sum_{j\in \mathcal{A}_{t}^{2^{k}}}\left( \left( \omega
_{t_{j+1}},\varphi \right) -\left( \omega _{t_{j}},\varphi \right) \right)
^{2}}{\sum_{j\in \mathcal{A}_{t}^{2^{k+1}}}\left( \left( \omega
_{t_{j+1}},\varphi \right) -\left( \omega _{t_{j}},\varphi \right) \right)
^{2}}=\frac{\left( \frac{1}{2^{k+1}}\right) ^{1-2H}V_{k}}{\left( \frac{1}{%
2^{k}}\right) ^{1-2H}V_{k+1}}\rightarrow \frac{1}{2^{1-2H}}.
\end{equation*}%
So%
\begin{equation*}
\log \frac{\sum_{j\in \mathcal{A}_{t}^{2^{k}}}\left( \left( \omega
_{t_{j+1}},\varphi \right) -\left( \omega _{t_{j}},\varphi \right) \right)
^{2}}{\sum_{j\in \mathcal{A}_{t}^{2^{k+1}}}\left( \left( \omega
_{t_{j+1}},\varphi \right) -\left( \omega _{t_{j}},\varphi \right) \right)
^{2}}\rightarrow -\left( 1-2H\right) \log 2=\left( 2H-1\right) \log 2.
\end{equation*}%
Finally we deduce that: 
\begin{equation*}
H_{k}=\frac{1}{2}\left( \frac{1}{\log 2}\log \frac{\sum_{j\in \mathcal{A}%
_{t}^{2^{k}}}\left( \left( \omega _{t_{j+1}},\varphi \right) -\left( \omega
_{t_{j}},\varphi \right) \right) ^{2}}{\sum_{j\in \mathcal{A}%
_{t}^{2^{k+1}}}\left( \left( \omega _{t_{j+1}},\varphi \right) -\left(
\omega _{t_{j}},\varphi \right) \right) ^{2}}-1\right) \rightarrow H.
\end{equation*}

\section{Fixed point argument for the general case}\label{sect:genproofs}
In this section we extend the fixed point argument developed in Section \ref{sect:vorticityeqn} to a more general class of stochastic evolution equations. The framework introduced in Section \ref{sect:sewinglemma}, in particular the sewing lemma for general integrands, allows us to treat a broader range of nonlinearities and noise structures beyond the transport-type case. Since the arguments closely follow those of Section \ref{sect:vorticityeqn}, we only highlight the main steps and the necessary modifications, and keep the exposition deliberately concise.

Let $V^T=C([0,T];\cB_\alpha)\cap C^\gamma([0,T];\cB_{\alpha-\gamma})$ be as before.
\begin{theorem}
   Under the Assumptions~\ref{ass:d} and~\ref{ass:f}, the equation \eqref{general} has a unique solution in $V^T$. 
\end{theorem}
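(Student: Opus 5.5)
We would argue exactly as in Theorem \ref{thm:mild} and Proposition \ref{prop:contraction}, with the transport noise and Biot--Savart nonlinearity replaced by the abstract operators. Let $(S_t)$ be the analytic semigroup generated by $-\mathcal{E}$ (Assumption \ref{linear}), so that \eqref{semigroup}--\eqref{hg:1} hold. Define
\[
\Lambda(\omega)_t=S_t\omega_0-\int_0^t S_{t-s}\mathcal{D}\omega_s\,ds-\int_0^t S_{t-s}\mathcal{F}\omega_s\,dW^H_s .
\]
Take the ball $B_R(T)\subset V^T$ centred at $y_0=\{t\mapsto S_t\omega_0\}$. The centre lies in $V^T$ by \eqref{hg:1}. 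The goal is to show that, for $T$ small, $\Lambda$ maps $B_R(T)$ into itself and is a contraction there. Banach's fixed point theorem then gives existence and uniqueness of the mild solution in $B_R(T)$. Uniqueness in all of $V^T$ follows because any solution lies in some ball $B_{R'}$, and the contraction constant on $B_{R'}$ can again be made small.

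\textbf{Drift term.} We repeat the computation of Section \ref{sect:nonlinear} with $\tfrac12$ replaced by $\beta<1$. By Assumption \ref{ass:d}, $\|\mathcal{D}\omega_s\|_{\alpha-\beta}\le CR^q$ on the ball. Using \eqref{hg:2} we get
\[
\Big\|\int_s^t S_{t-r}\mathcal{D}\omega_r\,dr\Big\|_\alpha\lesssim R^q(t-s)^{1-\beta}.
\]
For the term with $S_{t-s}-I$ we use \eqref{hg:1} with exponent $\gamma$ in $\cB_{\alpha-\gamma}$ and exponent $\sigma<1-\beta$ in $\cB_\alpha$. This gives a $V^T$-bound of order $R^q(T^{1-\beta}+T^{1-\beta-\gamma}\cdot)$, with every power of $T$ positive provided we split the exponents correctly. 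If $\gamma+\beta\ge1$, we instead estimate the $C^\gamma$ part via the interpolation inequality \eqref{interpolation:ineq}. The Lipschitz bound of Assumption \ref{ass:d} gives the same estimate for differences, with the constant $CR^p$.

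\textbf{Noise term.} We apply Theorem \ref{s} with $\tfrac12$ replaced by $\theta$. The proof of Theorem \ref{s} used only $\gamma>\tfrac12$ and the smoothing exponent $\tfrac12<\gamma$, and every exponent $\gamma-\tfrac12$ becomes $\gamma-\theta>0$ because $\theta<\gamma$. The process $Y=\mathcal{F}\omega$ lies in $C([0,T],\cB_{\alpha-\theta})\cap C^\gamma([0,T],\cB_{\alpha-\gamma-\theta})$ by Assumption \ref{ass:f}, since $\mathcal{F}$ is Lipschitz from $\cB_{\alpha-\gamma}$ to $\cB_{\alpha-\gamma-\theta}$. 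Remark \ref{rem:sewing} (parts 2 and 3) then yields
\[
\|I(\omega)\|_{V^T}\le C^1_T\|\omega\|_{V^T},\qquad C^1_T=C\big(K^{\gamma+\varepsilon}_WT^\varepsilon+K^\gamma_WT^{\gamma-\theta}\big)\to0,
\]
using the extra Hölder regularity $\gamma+\varepsilon<H$ of $W^H$. Choosing $T=T(R)$ makes $C^1_T\le\tfrac13$.

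\textbf{Main obstacle.} The delicate step is the Lipschitz estimate \eqref{cond:f} for the nonlinear $\mathcal{F}$ in the $C^\gamma$-component. Here one must bound
\[
\|\mathcal{F}\omega^1_t-\mathcal{F}\omega^2_t-\mathcal{F}\omega^1_s+\mathcal{F}\omega^2_s\|_{\alpha-\gamma-\theta}.
\]
We plan to write this difference as
\[
\int_0^1\Big[D\mathcal{F}(\eta^\tau_t)-D\mathcal{F}(\eta^\tau_s)\Big](\omega^1_t-\omega^2_t)\,d\tau+\int_0^1 D\mathcal{F}(\eta^\tau_s)\big((\omega^1-\omega^2)_t-(\omega^1-\omega^2)_s\big)\,d\tau,
\]
where $\eta^\tau=\tau\omega^1+(1-\tau)\omega^2$. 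Boundedness of $D^2\mathcal{F}$ controls the first term by $R|t-s|^\gamma\|\omega^1-\omega^2\|_{V^T}$, and boundedness of $D\mathcal{F}$ controls the second. This gives \eqref{cond:f} with a constant $C(R)$, which is still absorbed by the small factor $C^1_T$. Collecting the drift and noise bounds, $\Lambda(B_R)\subset B_R$ and $\|\Lambda\omega^1-\Lambda\omega^2\|_{V^T}\le L\|\omega^1-\omega^2\|_{V^T}$ with $L<1$ for $T$ small, which concludes the argument.
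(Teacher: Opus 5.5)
Your proposal is correct and follows the paper's proof essentially step by step. Both use Banach's fixed point theorem on a ball around $t\mapsto S_t\omega_0$, bound the drift by semigroup smoothing to get a factor $T^{1-\beta}$, and bound the noise via Theorem~\ref{s} with smoothing exponent $\theta<\gamma$ and Remark~\ref{rem:sewing} to get $T^{\gamma-\theta}$, so that $L\sim (T^{1-\beta}+T^{\gamma-\theta})(R+1)$. Two small remarks. Your case split at $\gamma+\beta\ge 1$ is unnecessary: the paper bounds the $\int_s^t$ piece directly via $\|S_{t-r}x\|_{\alpha-\gamma}\lesssim (t-r)^{-\max\{0,\beta-\gamma\}}\|x\|_{\alpha-\beta}$, which is what your interpolation fallback yields anyway. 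Your mean-value argument for \eqref{cond:f} usefully makes explicit the $R$-dependent constant that the paper only asserts.
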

\begin{proof}
As before, the proof relies on a fixed point argument. To this aim we define the map 
\begin{equation*}
\Lambda :V^T\rightarrow V^T
\end{equation*}%
and show that there exists $\omega $ such that 
\begin{equation*}
\Lambda (\omega )=\omega 
\end{equation*}%
where%
\begin{equation}
\Lambda _{t}(\omega )=S_{t}\omega _{0}-\int_{0}^{t}S_{t-s}(\mathcal{D}\omega
_{s})ds-\int_{0}^{t}S_{t-s}(\mathcal{F}\omega _{s})dW_{s}^{H}.
\label{eq:mainmild1}
\end{equation}%
Observe that
\begin{equation*}
\left\vert \left\vert S_{t}\omega _{0}\right\vert \right\vert _{\mathcal{B}%
_{\alpha }}\leq \left\vert \left\vert \omega _{0}\right\vert \right\vert _{%
\mathcal{B}_{\alpha }}
\end{equation*}%
\begin{eqnarray}
\left\vert \left\vert \int_{0}^{t}S_{t-s}(\mathcal{D}\omega
_{s})ds\right\vert \right\vert _{\mathcal{B}_{\alpha }} &\leq
&\int_{0}^{t}\left\vert \left\vert S_{t-s}(\mathcal{D}\omega
_{s})\right\vert \right\vert _{\mathcal{B}_{\alpha }}ds  
\leq C\int_{0}^{t}\frac{1}{\left( t-s\right) ^{\beta }}\left\vert
\left\vert (\mathcal{D}\omega _{s})\right\vert \right\vert _{\mathcal{B}%
_{\alpha -\beta }}ds  \notag \\
&\leq &C\int_{0}^{t}\frac{1}{\left( t-s\right) ^{\beta }}\left\vert
\left\vert \omega _{s}\right\vert \right\vert _{\mathcal{B}_{\alpha
}}^{q}ds\leq C\sup_{s\in \left[ 0,t\right] }\left\vert \left\vert \omega
_{s}\right\vert \right\vert _{\mathcal{B}_{\alpha }}^{q}\frac{t^{1-\beta }}{%
1-\beta }.  \label{norm1}
\end{eqnarray}%
The above computation gives that control on the nonlinear term. We also
need to justify that it is continuous in time with values in $\cB_\alpha$. For this observe that 
\begin{equation*}
\left\vert \left\vert \int_{0}^{t_{1}}S_{t_{1}-s}(\mathcal{D}\omega
_{s})ds-\int_{0}^{t_{2}}S_{t_{1}-s}(\mathcal{D}\omega _{s})ds\right\vert
\right\vert _{\mathcal{B}_{\alpha }}=A+B
\end{equation*}%
where:%
\begin{eqnarray*}
A &:&=\left\vert \left\vert \int_{t_{2}}^{t_{1}}S_{t_{1}-s}(\mathcal{D}%
\omega _{s})ds\right\vert \right\vert _{\mathcal{B}_{\alpha }}\leq
\int_{t_{2}}^{t_{1}}\left\vert \left\vert S_{t_{1}-s}(\mathcal{D}\omega
_{s})\right\vert \right\vert _{\mathcal{B}_{\alpha }}ds \leq \int_{t_{2}}^{t_{1}}\frac{1}{\left( t_{1}-s\right) ^{\beta }}%
\left\vert \left\vert (\mathcal{D}\omega _{s})\right\vert \right\vert _{%
\mathcal{B}_{\alpha -\beta }}ds \\
&\leq &\int_{t_{2}}^{t_{1}}\frac{1}{\left( t_{1}-s\right) ^{\beta }}%
\left\vert \left\vert \omega _{s}\right\vert \right\vert _{\mathcal{B}%
_{\alpha }}^{q}ds\leq C\sup_{s\in \left[ 0,t\right] }\left\vert \left\vert
\omega _{s}\right\vert \right\vert _{\mathcal{B}_{\alpha }}^{q}\frac{\left(
t_{1}-t_{2}\right) ^{1-\beta }}{1-\beta }
\end{eqnarray*}%
and, for $\sigma $ such that $\beta +\sigma <1$, say $\sigma =\frac{1-\beta 
}{2}$ 
\begin{eqnarray*}
B &:&=\left\vert \left\vert \int_{0}^{t_{2}}\left(
S_{t_{1}-s}-S_{t_{2}-s}\right) (\mathcal{D}\omega _{s})ds\right\vert
\right\vert _{\mathcal{B}_{\alpha }} \leq \int_{0}^{t_{2}}\left\vert \left\vert \left( S_{t_{1}-t_{2}}-I\right)
S_{t_{2}-s}(\mathcal{D}\omega _{s})\right\vert \right\vert _{\mathcal{B}%
_{\alpha }}ds \\
&\leq &\int_{0}^{t_{2}}\left( t_{1}-t_{2}\right) ^{\sigma }\left\vert
\left\vert S_{t_{2}-s}(\mathcal{D}\omega _{s})\right\vert \right\vert _{%
\mathcal{B}_{\alpha +\sigma }}ds \leq \left( t_{1}-t_{2}\right) ^{\sigma }\int_{0}^{t_{2}}\frac{1}{\left(
t_{2}-s\right) ^{\beta +\sigma }}\left\vert \left\vert (\mathcal{D}\omega
_{s})\right\vert \right\vert _{\mathcal{B}_{\alpha -\beta }}ds \\
&\leq &\left( t_{1}-t_{2}\right) ^{\sigma }\int_{0}^{t_{2}}\frac{1}{\left(
t_{2}-s\right) ^{\beta +\sigma }}\left\vert \left\vert \omega
_{s}\right\vert \right\vert _{\mathcal{B}_{\alpha }}^{q}ds \leq C\left( t_{1}-t_{2}\right) ^{\sigma }\sup_{s\in \left[ 0,t\right]
}\left\vert \left\vert \omega _{s}\right\vert \right\vert _{\mathcal{B}%
_{\alpha }}^{q}\frac{t_{2}^{1-\beta -\sigma }}{1-\beta -\sigma }.
\end{eqnarray*}%
From here we deduce that 
\begin{equation*}
\left\vert \left\vert \int_{0}^{t_{1}}S_{t_{1}-s}(\mathcal{D}\omega
_{s})ds-\int_{0}^{t_{2}}S_{t_{1}-s}(\mathcal{D}\omega _{s})ds\right\vert
\right\vert _{\mathcal{B}_{\alpha }}\leq C\left( T\right) \sup_{s\in \left[
0,t\right] }\left\vert \left\vert \omega _{s}\right\vert \right\vert _{%
\mathcal{B}_{\alpha }}^{q}\left( \left( t_{1}-t_{2}\right) ^{\frac{1-\beta }{%
2}}+\left( t_{1}-t_{2}\right) ^{1-\beta }\right) 
\end{equation*}%
so indeed the nonlinear term is continuous.
We need to prove now the H\"older continuity in the norm $\left\vert
\left\vert \cdot \right\vert \right\vert _{\mathcal{B}_{\alpha -\gamma }}$.
For this observe that 
\begin{equation*}
\left\vert \left\vert \int_{0}^{t_{1}}S_{t_{1}-s}(\mathcal{D}\omega
_{s})ds-\int_{0}^{t_{2}}S_{t_{1}-s}(\mathcal{D}\omega _{s})ds\right\vert
\right\vert _{\mathcal{B}_{\alpha -\gamma }}=A+B
\end{equation*}%
where:%
\begin{eqnarray*}
A &:&=\left\vert \left\vert \int_{t_{2}}^{t_{1}}S_{t_{1}-s}(\mathcal{D}%
\omega _{s})ds\right\vert \right\vert _{\mathcal{B}_{\alpha -\gamma }} \leq \int_{t_{2}}^{t_{1}}\left\vert \left\vert S_{t_{1}-s}(\mathcal{D}%
\omega _{s})\right\vert \right\vert _{\mathcal{B}_{\alpha -\gamma }}ds \leq C\int_{t_{2}}^{t_{1}} (t_1-s)^{-\max\{0,\beta-\gamma\}} \| \mathcal{D}\omega
_{s}\| _{\mathcal{B}_{\alpha -\beta }}ds \\
&\leq &C\sup_{s\in \left[ 0,t\right] }\left\vert \left\vert \omega
_{s}\right\vert \right\vert _{\mathcal{B}_{\alpha }}^{q}\left(
t_{1}-t_{2}\right)^{\min\{ 1,1+\gamma-\beta \}} \\
&\leq &CT^{1-\beta }\sup_{s\in \left[ 0,t\right] }\left\vert \left\vert
\omega _{s}\right\vert \right\vert _{\mathcal{B}_{\alpha }}^{q}\left(
t_{1}-t_{2}\right) ^{\gamma }
\end{eqnarray*}
and
\begin{eqnarray*}
B &:&=\left\vert \left\vert \int_{0}^{t_{2}}\left(
S_{t_{1}-s}-S_{t_{2}-s}\right) (\mathcal{D}\omega _{s})ds\right\vert
\right\vert _{\mathcal{B}_{\alpha -\gamma }} \leq \int_{0}^{t_{2}}\left\vert \left\vert \left(
S_{t_{1}-s}-S_{t_{2}-s}\right) (\mathcal{D}\omega _{s})ds\right\vert
\right\vert _{\mathcal{B}_{\alpha -\gamma }}ds \\
&\leq &\int_{0}^{t_{2}}\left\vert \left\vert \left( S_{t_{1}-t_{2}}-I\right)
S_{t_{2}-s}(\mathcal{D}\omega _{s})\right\vert \right\vert _{\mathcal{B}%
_{\alpha -\gamma }}ds \leq C\left( t_{2}-t_{1}\right) ^{\gamma }\int_{0}^{t_{2}}\left\vert
\left\vert S_{t_{2}-s}(\mathcal{D}\omega _{s})\right\vert \right\vert _{%
\mathcal{B}_{\alpha }}ds \\
&\leq &C\left( t_{2}-t_{1}\right) ^{\gamma }\int_{0}^{t_{2}}\frac{1}{\left(
t_{2}-s\right) ^{-\beta }}\left\vert \left\vert \mathcal{D}\omega
_{s}\right\vert \right\vert _{\mathcal{B}_{\alpha -\gamma }}ds \leq C\sup_{s\in \left[ 0,t\right] }\left\vert \left\vert \omega
_{s}\right\vert \right\vert _{\mathcal{B}_{\alpha }}^{q}t_{2}^{1-\beta
}\left( t_{2}-t_{1}\right) ^{\gamma } \\
&\leq &C\sup_{s\in \left[ 0,t\right] }\left\vert \left\vert \omega
_{s}\right\vert \right\vert _{\mathcal{B}_{\alpha }}^{q}T^{1-\beta }\left(
t_{1}-t_{2}\right) ^{\gamma}.
\end{eqnarray*}%
It follows that 
\begin{equation}
\left\vert \left\vert \int_{0}^{t_{1}}S_{t_{1}-s}(\mathcal{D}\omega
_{s})ds-\int_{0}^{t_{2}}S_{t_{1}-s}(\mathcal{D}\omega _{s})ds\right\vert
\right\vert _{\mathcal{B}_{\alpha -\gamma }}\leq CT^{1-\beta }\sup_{s\in %
\left[ 0,T\right] }\left\vert \left\vert \omega _{s}\right\vert \right\vert
_{\mathcal{B}_{\alpha }}^{q}\left( t_{1}-t_{2}\right)^{\gamma}.
\label{norm2}
\end{equation}%
In conclusion, from (\ref{norm1}) and (\ref{norm2}) it follows that 
\begin{equation*}
\left\Vert \int_{0}^{\cdot }S_{t-s}(\mathcal{D}\omega _{s})ds\right\Vert
_{V^{T}}\leq CT^{1-\beta }\sup_{s\in \left[ 0,T\right] }\left\vert
\left\vert \omega _{s}\right\vert \right\vert _{\mathcal{B}_{\alpha }}^{q}.
\end{equation*}
In particular one can choose $T$ sufficiently small such that 
\begin{equation*}
\left\Vert \int_{0}^{\cdot }S_{t-s}(\mathcal{D}\omega _{s})ds\right\Vert
_{V^{T}}\leq R/2.
\end{equation*}
We now show that $\Lambda :V^T\rightarrow V^T$ is a contraction. Therefore, we need to estimate

\begin{eqnarray*}
\left\vert \left\vert \int_{0}^{t}S_{t-s}(\mathcal{D}\omega
_{s}^{1})ds-\int_{0}^{t}S_{t-s}(\mathcal{D}\omega _{s}^{2})ds\right\vert
\right\vert _{\mathcal{B}_{\alpha }} &\leq &\int_{0}^{t}\frac{1}{\left(
t-s\right) ^{\beta }}\left\vert \left\vert \mathcal{D\omega }^{1}\mathcal{%
-D\omega }^{2}\right\vert \right\vert _{\mathcal{B}_{\alpha -\beta }}ds \\
&\leq &Ct^{1-\beta }\max \left( \left\vert \left\vert \mathcal{\omega }%
^{1}\right\vert \right\vert _{\mathcal{B}_{\alpha }}^{p},\left\vert
\left\vert \mathcal{\omega }^{2}\right\vert \right\vert _{\mathcal{B}%
_{\alpha }}^{p}\right) \sup_{s\in \left[ 0,t\right] }\left\vert \left\vert 
\mathcal{\omega }_{s}^{1}\mathcal{-\omega }_{s}^{2}\right\vert \right\vert _{%
\mathcal{B}_{\alpha }}.
\end{eqnarray*}%
Hence 
\begin{equation}
\left\Vert \int_{0}^{\cdot }S_{\cdot -s}(\mathcal{D}\omega
_{s}^{1})ds-\int_{0}^{\cdot }S_{\cdot -s}(\mathcal{D}\omega
_{s}^{2})ds\right\Vert _{\left. C(0,T];\mathcal{B}_{a}\right) }\leq
CT^{1-\beta }\left( \left\Vert \mathcal{\omega }^{1}\right\Vert _{C\left(
(0,T];\mathcal{B}_{a}\right) }+\left\Vert \mathcal{\omega }^{2}\right\Vert
_{C\left( (0,T];\mathcal{B}_{a}\right) }\right) \left\Vert \mathcal{\omega }%
^{1}\mathcal{-\omega }^{2}\right\Vert _{\left. C(0,T];\mathcal{B}_{a}\right).
}  \label{111}
\end{equation}%
Denote%
\begin{equation*}
\Upsilon _{t}=\int_{0}^{t}S_{t-s}(\mathcal{D}\omega
_{s}^{1})ds-\int_{0}^{t}S_{t-s}(\mathcal{D}\omega
_{s}^{2})ds=\int_{0}^{t}S_{t-s}(\mathcal{D}\omega _{s}^{1}-\mathcal{D}\omega
_{s}^{2})ds.
\end{equation*}%
Then 
\begin{equation*}
\left\vert \left\vert \Upsilon _{t_{1}}-\Upsilon _{t_{2}}\right\vert
\right\vert _{\mathcal{B}_{\alpha -\beta }}=A^{\Upsilon }+B^{\Upsilon },
\end{equation*}%
where%
\begin{eqnarray*}
A^{\Upsilon } &:&=\left\vert \left\vert \int_{t_{2}}^{t_{1}}S_{t_{1}-s}(%
\mathcal{D}\omega _{s}^{1}-\mathcal{D}\omega _{s}^{2})ds\right\vert
\right\vert _{\mathcal{B}_{\alpha -\gamma }} \\
&\leq &\int_{t_{2}}^{t_{1}}\left\vert \left\vert S_{t_{1}-s}(\mathcal{D}%
\omega _{s}^{1}-\mathcal{D}\omega _{s}^{2})\right\vert \right\vert _{%
\mathcal{B}_{\alpha -\gamma }}ds \\
&\leq &C\int_{t_{2}}^{t_{1}} (t_1-s)^{-\max\{0,\beta-\gamma\}} \left\vert \left\vert \mathcal{D}\omega _{s}^{1}-%
\mathcal{D}\omega _{s}^{2}\right\vert \right\vert _{\mathcal{B}_{\alpha
-\beta }}ds \\
&\leq &C\left( t_{1}-t_{2}\right)^{\min\{1,1+\gamma-\beta\}} \max \left( \left\vert \left\vert \mathcal{%
\omega }^{1}\right\vert \right\vert _{\mathcal{B}_{\alpha }}^{p},\left\vert
\left\vert \mathcal{\omega }^{2}\right\vert \right\vert _{\mathcal{B}%
_{\alpha }}^{p}\right) \left\vert \left\vert \mathcal{\omega }^{1}\mathcal{%
-\omega }^{2}\right\vert \right\vert _{\mathcal{B}_{\alpha }} \\
&\leq &C\left( t_{1}-t_{2}\right) ^{ 1-\beta  }\left(
t_{1}-t_{2}\right) ^{\gamma }\max \left( \left\vert \left\vert \mathcal{%
\omega }^{1}\right\vert \right\vert _{\mathcal{B}_{\alpha }}^{p},\left\vert
\left\vert \mathcal{\omega }^{2}\right\vert \right\vert _{\mathcal{B}%
_{\alpha }}^{p}\right) \left\vert \left\vert \mathcal{\omega }^{1}\mathcal{%
-\omega }^{2}\right\vert \right\vert _{\mathcal{B}_{\alpha }} \\
&\leq &CT^{1-\beta }\left( t_{1}-t_{2}\right) ^{\gamma }\max \left(
\left\vert \left\vert \mathcal{\omega }^{1}\right\vert \right\vert _{%
\mathcal{B}_{\alpha }}^{p},\left\vert \left\vert \mathcal{\omega }%
^{2}\right\vert \right\vert _{\mathcal{B}_{\alpha }}^{p}\right) \left\vert
\left\vert \mathcal{\omega }^{1}\mathcal{-\omega }^{2}\right\vert
\right\vert _{\mathcal{B}_{\alpha }}.
\end{eqnarray*}%
Similarly, we get 
\begin{eqnarray*}
B^{\Upsilon } &:&=\left\vert \left\vert \int_{0}^{t_{2}}\left(
S_{t_{1}-s}-S_{t_{2}-s}\right) (\mathcal{D}\omega _{s}^{1}-\mathcal{D}\omega
_{s}^{2})ds\right\vert \right\vert _{\mathcal{B}_{\alpha -\gamma }} \\
&\leq &\int_{0}^{t_{2}}\left\vert \left\vert \left(
S_{t_{1}-s}-S_{t_{2}-s}\right) (\mathcal{D}\omega _{s}^{1}-\mathcal{D}\omega
_{s}^{2})ds\right\vert \right\vert _{\mathcal{B}_{\alpha -\gamma }}ds \\
&\leq &\int_{0}^{t_{2}}\left\vert \left\vert \left( S_{t_{1}-t_{2}}-I\right)
S_{t_{2}-s}(\mathcal{D}\omega _{s}^{1}-\mathcal{D}\omega
_{s}^{2})\right\vert \right\vert _{\mathcal{B}_{\alpha -\gamma }}ds \\
&\leq &C\left( t_{2}-t_{1}\right) ^{\gamma }\int_{0}^{t_{2}}\left\vert
\left\vert S_{t_{2}-s}(\mathcal{D}\omega _{s}^{1}-\mathcal{D}\omega
_{s}^{2})\right\vert \right\vert _{\mathcal{B}_{\alpha }}ds \\
&\leq &C\left( t_{2}-t_{1}\right) ^{\gamma }\int_{0}^{t_{2}}\frac{1}{\left(
t_{2}-s\right) ^{\beta }}\left\vert \left\vert \mathcal{D}\omega _{s}^{1}-%
\mathcal{D}\omega _{s}^{2}\right\vert \right\vert _{\mathcal{B}_{\alpha
-\beta }}ds \\
&\leq &Ct_{2}^{1-\beta }\left( t_{2}-t_{1}\right) ^{\gamma }\max \left(
\left\vert \left\vert \mathcal{\omega }^{1}\right\vert \right\vert _{%
\mathcal{B}_{\alpha }}^{p},\left\vert \left\vert \mathcal{\omega }%
^{2}\right\vert \right\vert _{\mathcal{B}_{\alpha }}^{p}\right) \left\vert
\left\vert \mathcal{\omega }^{1}\mathcal{-\omega }^{2}\right\vert
\right\vert _{\mathcal{B}_{\alpha }} \\
&\leq &CT^{1-\beta }\left( t_{1}-t_{2}\right) ^{\gamma }\max \left(
\left\vert \left\vert \mathcal{\omega }^{1}\right\vert \right\vert _{%
\mathcal{B}_{\alpha }}^{p},\left\vert \left\vert \mathcal{\omega }%
^{2}\right\vert \right\vert _{\mathcal{B}_{\alpha }}^{p}\right) \left\vert
\left\vert \mathcal{\omega }^{1}\mathcal{-\omega }^{2}\right\vert
\right\vert _{\mathcal{B}_{\alpha }}.
\end{eqnarray*}%
It follows that 
\begin{equation}
\begin{aligned}
& \left\Vert \int_{0}^{\cdot }S_{\cdot -s}(\mathcal{D}\omega
_{s}^{1})ds-\int_{0}^{\cdot }S_{\cdot -s}(\mathcal{D}\omega
_{s}^{2})ds\right\Vert _{C^{\gamma }\left( [0,T];\mathcal{B}_{a-\gamma
}\right) }\\
& \leq CT^{1-\beta }\left( \left\Vert \mathcal{\omega }%
^{1}\right\Vert _{C\left( (0,T];\mathcal{B}_{a}\right) }+\left\Vert \mathcal{%
\omega }^{2}\right\Vert _{C\left( (0,T];\mathcal{B}_{a}\right) }\right)
\left\Vert \mathcal{\omega }^{1}\mathcal{-\omega }^{2}\right\Vert _{\left.
C(0,T];\mathcal{B}_{a}\right) } . \label{222}
\end{aligned}
\end{equation}
Therefore it follows that 
\begin{eqnarray*}
\left\vert \left\vert \int_{0}^{\cdot }S_{\cdot -s}(\mathcal{D}\omega
_{s}^{1})ds-\int_{0}^{\cdot }S_{\cdot -s}(\mathcal{D}\omega
_{s}^{2})ds\right\vert \right\vert _{V^{T}} &\leq &C T^{1-\beta
} R\left\Vert \mathcal{\omega }^{1}\mathcal{-\omega }%
^{2}\right\Vert _{\left. C(0,T];\mathcal{B}_{a}\right) } \\
&\leq &C T^{1-\beta } R\left\vert \left\vert \mathcal{\omega }%
^{1}\mathcal{-\omega }^{2}\right\vert \right\vert _{V^{T}}.
\end{eqnarray*}%
Similarly we have that, see Remark \ref{rem:sewing} and Corollary~\ref{i:reg}
\begin{equation*}
\left\vert \left\vert \int_{0}^{\cdot }S_{\cdot -s}(\mathcal{F}\omega
_{s}^{1})dW_{s}^{H}-\int_{0}^{\cdot }S_{\cdot -s}(\mathcal{F}\omega
_{s}^{2})dW_{s}^{H}\right\vert \right\vert _{V^{T}}\leq C T^{\gamma-\theta
} R \left\vert \left\vert \mathcal{\omega }^{1}\mathcal{-\omega }%
^{2}\right\vert \right\vert _{V^{T}}.
\end{equation*}
Taking $L:=2C\left( T^{1-\beta }+T^{\gamma-\theta}\right) (R+1)$ we deduce the result for $T$
sufficiently small. 
\end{proof}

\begin{remark} The result can be generalised in a more specific way, applicable especially to three-dimensional models which contain a stretching term. 
Consider the following nonlinear equation
\begin{equation}\label{eq:abstract}
dX_t + B(\tilde{dt}, X_t) = \Delta X_tdt
\end{equation}
with
\begin{equation}
\begin{aligned}
B(\tilde{dt}, X_t) &= f(X_t)\cdot\nabla X_t dt   -  X_t \cdot \nabla f(X_t)dt+ (\xi \cdot \nabla X_t  - X_t \cdot \nabla \xi)dW_t^H\\
& = [f(X_t), X_t]dt + [\xi, X_t]dW_t^H
\end{aligned}
\end{equation}
$H\in \left(\frac{1}{2}, 1 \right), \quad f(X_t) = curl^{-1}X_t$,
and initial condition $X_0 \in \mathcal{B}_{\alpha}$. 
Equation \eqref{eq:abstract} admits a unique mild solution in the function space $V^T$.
To prove the equivalent of Proposition \ref{prop:contraction} in the general case a space $D$ is required such that $\Lambda : D \rightarrow D$ and a distance $d_D$ on $D$ with 
\begin{equation*}
d_D(\Lambda(x^1), \Lambda(x^2)) \leq Kd_D(x^1, x^2)
\end{equation*}
and $K<1$. To correctly define $D$ and $d_D$ one needs to choose properly the space(s) in which the integral 
\begin{equation*}
\displaystyle\int_0^{\cdot} B(\tilde{ds}, X_s)
\end{equation*}
is well-defined for $t\in [0, T]$ or for $t\in[0,\tau]$ with suitably-chosen $\tau$. 
We have
\begin{equation*}
\begin{aligned}
\displaystyle\int_0^{\cdot} B(\tilde{ds}, X_s) & = \displaystyle\int_0^{\cdot} B_1(X_s)ds + \displaystyle\int_0^{\cdot} B_2(X_s)dW_s^H
\end{aligned}
\end{equation*}
We actually have the map
\begin{equation}
X \rightarrow \Lambda(X)_t := S(t)X_0 + \Gamma\left( \displaystyle\int_0^{\cdot}B(\tilde{ds},X_s)\right)_t
\end{equation}
with
\begin{equation}
\begin{aligned}
\Gamma\left( \displaystyle\int_0^{\cdot}B(\tilde{ds},X_s)\right)_t &= \displaystyle\int_0^t S(t-s)B(\tilde{ds}, X_s) \\
& = \displaystyle\int_0^t S(t-s)\left(f(X_s)\cdot\nabla X_s ds + \xi \cdot \nabla X_s dW_s^H\right)\\
& = \displaystyle\int_0^t S(t-s)B_1(X_s)ds + \displaystyle\int_0^t S(t-s)B_2(X_s)dW_s^H. 
\end{aligned}
\end{equation}

\end{remark}

\begin{remark}
For equation~\eqref{general} we can define the  estimator~\eqref{est:h} for the Hurst parameter. 
Similar to the proof of Proposition \ref{prop:est:h}, we infer that
\[ \lim\limits_{k\to\infty} V_k = \int_0^t (\mathcal{F}(\omega_s),\varphi)^2~\txtd s.\]
\end{remark}

\section{Applications}\label{sect:applications}
The following are examples of nonlinear operators $\mathcal{D}$ satisfying
the conditions we propose above

\begin{itemize}
\item $\mathcal{B}_{\alpha }=H^{2\alpha}(\mathbb{T}^{2})$ with the norm $%
\left\vert \left\vert x\right\vert \right\vert _{\alpha }:=\left\vert
\left\vert x\right\vert \right\vert _{H^{2\alpha}(\mathbb{T}^{2})}$ and 
\begin{equation*}
\mathcal{D\omega =}u\cdot \nabla \omega
\end{equation*}%
where $u=$curl $^{-1}\omega $. Here $u$ is the velocity of the fluid and $%
\omega $ is the vorticity of the fluid. This is the nonlinear operator
appearing in the equation for 2D\ ideal incompresible fluids (in vorticity
form).
\item $\mathcal{B}_{\alpha }=H^{2\alpha}(\mathbb{T}^{3})$ with the norm $%
\left\vert \left\vert x\right\vert \right\vert _{\alpha }:=\left\vert
\left\vert x\right\vert \right\vert _{H^{2\alpha}(\mathbb{T}^{3})}$ and 
\begin{equation*}
\mathcal{D\omega =}u\cdot \nabla \omega -\omega \cdot \nabla u=\nabla \cdot
\left( u\times \omega \right)
\end{equation*}%
where $u=$curl $^{-1}\omega $. Here $u$ is the velocity of the fluid and $%
\omega $ is the vorticity of the fluid. This is the nonlinear operator
appearing in the equation for 3D\ ideal incompresible fluids (in vorticity
form).
\item $\mathcal{B}_{\alpha }=H_{b}^{2\alpha}(\mathbb{T}^{2})$ with the
norm 
\begin{equation*}
\left\vert \left\vert x\right\vert \right\vert _{\alpha }:=\left\vert
\left\vert x\right\vert \right\vert _{H_{b}^{2\alpha}(\mathbb{T}%
^{2})}=\int_{\mathbb{T}^{2}}x\left( a\right) ^{2}b\left( a\right) da.
\end{equation*}%
These are weighted Sobolev spaces. 
\begin{equation*}
\mathcal{D\omega =}u\cdot \nabla \omega
\end{equation*}%
where 
\begin{equation*}
u+\frac{1}{6}\delta ^{2}b^{2}\nabla (\nabla \cdot u)=\mathrm{curl}%
^{-1}\left( b\omega \right)
\end{equation*}%
This is the nonlinear operator appearing in the great lake equation ($b$ is
the bottom topography)
\item $\mathcal{B}_{\alpha }=H^{2\alpha}(\mathbb{T}^{2}\times \mathbb{T}%
^{2})$ with the norm $\left\vert \left\vert x\right\vert \right\vert
_{\alpha }^{2}:=\left\vert \left\vert x^{1}\right\vert \right\vert
_{H^{2\alpha}(\mathbb{T}^{2})}^{2}+\left\vert \left\vert x^{2}\right\vert
\right\vert _{H^{2\alpha}(\mathbb{T}^{2})}^{2}$%
\begin{equation*}
\mathcal{D}\left( \mathcal{\omega }_{1},\omega _{2}\right) \mathcal{=}\left(
u_{1}\cdot \nabla \omega _{1}-\beta \frac{\partial \psi _{1}}{\partial x}%
,u_{2}\cdot \nabla \omega _{2}-\mu \Delta \omega _{2}-\beta \frac{\partial
\psi _{2}}{\partial x}\right)
\end{equation*}%
where $\psi _{i}$ is the stream function, $\beta $ is the planetary
vorticity gradient, $\mu $ is the bottom friction parameter, $u_{i}$ is the
velocity vector and $\omega _{i}$ is the vorticity of the fluid. The
computational domain $\Omega =\mathbb{T}^{2}\times \lbrack 0,\mathcal{H}]$  is a
horizontally periodic flat-bottom channel of depth $\mathcal{H}=\mathcal{H}_{1}+\mathcal{H}_{2}$ given by
two stacked isopycnal fluid layers of depth $\mathcal{H}_{1}$ and $\mathcal{H}_{2}$.
\end{itemize}
The two layers are related through two elliptic equations: 
\begin{subequations}
\begin{align*}
\omega _{1}& =\Delta \psi _{1}+s_{1}(\psi _{2}-\psi _{1}), \\
\omega _{2}& =\Delta \psi _{2}+s_{2}(\psi _{1}-\psi _{2}),
\end{align*}
with stratification parameters $s_{1}$, $s_{2}$. This is the nonlinear
operator appearing in the two layer quasi-geostrophic equation.
Assumption \ref{ass:d} on the drift term can be verified by Lemma \ref{sobolev}.
\end{subequations}
\vspace{10mm}

\paragraph{Acknowledgements}
A. Blessing acknowledges support by the Deutsche Forschungsgemeinschaft (DFG, German Research Foundation) - CRC/TRR 388 "Rough Analysis, Stochastic Dynamics and Related Fields" - Project ID 516748464 and from DFG CRC 1432 " Fluctuations and Nonlinearities in Classical and Quantum Matter beyond Equilibrium" - Project ID 425217212.

D. Crisan has been supported by the European Research Council (ERC) under the European Union’s Horizon 2020 Research and Innovation Programme, (ERC) Grant Agreement No 856408: Stochastic Transport in Upper Ocean Dynamics (STUOD). 

O. Lang has been partially supported by the European Research Council (ERC) under the European Union’s Horizon 2020 Research and Innovation Programme (ERC), Grant Agreement No 856408: Stochastic Transport in Upper Ocean Dynamics (STUOD). 

\vspace{3mm}
\noindent\textbf{Data availability statement} \\
\noindent Data sharing not applicable to this article as no datasets were generated or analysed during the current study.

\vspace{3mm}
\noindent\textbf{Conflict of interest statement} \\
On behalf of the authors, the corresponding author states that there is no conflict of interest.

\appendix
\section{An alternative proof for the construction of the Young integral}\label{a:young}

We provide an alternative construction of the Young integral based on the sewing lemma similar to \cite[Theorem 2.4]{GH} and \cite[Theorem 4.1]{GHN} tailored to the Young case and transport-type noise.
To this aim we consider the scale of Banach spaces $(\cB_\delta)_{\delta\in \R}$, where we will set $\delta=\alpha-1/2$ to incorporate transport-type noise, as seen in Section \ref{sect:sewinglemma}.~We first introduce some notations.
 
 \begin{itemize}
     \item  $[W^H]_\gamma$ denotes the $\gamma$-H\"older norm of the noise on an arbitrary time interval.
          \item We set $\Delta_n:=\{ 0\leq t_1\leq t_n\leq T \}$ and consider the space $C^{\gamma}_2([0,T];\cB_\delta)$ of functions $g=(g_{s,t}):\Delta_2 \to \cB_\delta$ for which 
    \[ \sup\limits_{0\leq s <t\leq T} \frac{\|g_{s,t}\|_{\cB_\delta}}{|t-s|^\gamma}<\infty. \]
    \item We introduce the increment operators $(\delta f)_{s,t} =f_t-f_s$, $(\delta^S f)_{s,t}=f_t-S_{t-s}f_s$, $(\delta f)_{s,u,t}=f_{s,t}-f_{s,u}-f_{u,t}$. Further, the space $C^{\gamma_1,\gamma_2}([0,T];\cB_\delta)$ consists of functions $h=(h_{s,u,t}):\Delta_3 \to \cB_\delta$ such that
    \[ \sup\limits_{(s,u,t)\in \Delta_3} \frac{\|h_{s,u,t}\|_{\cB_\delta}}{|t-u|^{\gamma_1}|u-s|^{\gamma_2} }<\infty. \]
    \item For our aims, in order to define the Young integral we consider the space $\mathcal{Y}$ consisting of two-index elements $\xi\in C^\gamma_2([0,T];\cB_{\delta})$ such that $\delta \xi \in C^{\gamma,\gamma}_3(\cB_{\delta-\gamma})$. We endow $\mathcal{Y}$ with the norm 
    \[ \|\xi\|_{\mathcal{Y}} =\|\xi\|_{C^\gamma_2([0,T];\cB_{\delta})} +\| \delta \xi\|_{C^{\gamma,\gamma}([0,T];\cB_{\delta-\gamma})}.\]

 \end{itemize}

\begin{theorem}{\em(Young integral)}\label{young:a}
    Let  $\xi\in\mathcal{Y}$. Then 
     there exists a map $\mathcal{I}: \mathcal{Y} \to C([0,T];\cB_\delta)\cap C^\gamma([0,T];\cB_{\delta-\gamma}) $ such that $\mathcal{I}_0=0$ which satisfies for every $0\leq s \leq t\leq T$ and $0\leq \theta<2\gamma$ the estimate:
    \begin{align}\label{est:young} 
    \| (\delta^S \mathcal{I}(\xi))_{s,t} - S_{t-s}\xi_{s,t}\|_{\delta-\gamma+\theta} \lesssim [W^H]_\gamma \|\xi\|_{\mathcal{Y}} |t-s|^{2\gamma-\theta}.
    \end{align}
    In particular, the convolution 
   \begin{align*}
      \mathcal{I}_t(\xi) := \lim\limits_{\pi\in\cP([0,t]),|\pi|\to 0} \sum\limits_{[u,v]\in\pi} S_{t-u}\xi_{u,v} 
   \end{align*}
   exists in $\cB_{\delta-\gamma}$.

\end{theorem}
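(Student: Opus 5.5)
The plan is to run the sewing argument directly on the semigroup-twisted germ, in the spirit of \cite[Theorem 2.4]{GH} and \cite[Theorem 4.1]{GHN}. Here $\xi$ already contains the increments of $W^H$, which is where the factor $[W^H]_\gamma$ comes from (think of $\xi_{u,v}=Y_u(W^H_v-W^H_u)$). Fix $t$. For $[u,v]\subset[0,t]$ set $\Xi^t_{u,v}:=S_{t-u}\xi_{u,v}$, and consider the dyadic Riemann sums $\mathcal{I}^k_{s,t}:=\sum_{[u,v]\in\pi_k([s,t])}S_{t-u}\xi_{u,v}$.

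\textbf{Step 1: the mild defect identity.} For $s\le u\le v$ the basic algebraic identity is
\begin{align*}
S_{t-s}\xi_{s,v}-S_{t-s}\xi_{s,u}-S_{t-u}\xi_{u,v}
= S_{t-s}(\delta\xi)_{s,u,v}+S_{t-u}\bigl(S_{u-s}-I\bigr)\xi_{u,v}.
\end{align*}
I will estimate the two terms in $\cB_{\delta-\gamma}$ as follows.
\begin{itemize}
\item The first term is bounded by $\|\delta\xi\|_{C^{\gamma,\gamma}}|u-s|^\gamma|v-u|^\gamma$, using \eqref{semigroup}.
\item The second term is bounded by $|u-s|^\gamma\|\xi_{u,v}\|_{\delta}\lesssim |u-s|^\gamma|v-u|^\gamma\|\xi\|_{C^\gamma_2}$, using \eqref{hg:1} with $\sigma=\gamma$.
\end{itemize}
So the defect has total order $2\gamma>1$. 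When I need the higher norm $\cB_{\delta-\gamma+\theta}$, I apply \eqref{hg:2} to the outer operator $S_{t-u}$ or $S_{t-s}$. This gains $\theta$ at the cost of a factor $(t-u)^{-\theta}$ (resp.\ $(t-s)^{-\theta}$).

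\textbf{Step 2: Cauchy property and the local estimate.} Passing from $\pi_k$ to $\pi_{k+1}$ on $[s,t]$ changes $\mathcal{I}^k_{s,t}$ by a sum of the defects above over the $2^k$ dyadic intervals of length $h=2^{-k}|t-s|$. In $\cB_{\delta-\gamma}$ this sum is bounded by $2^k h^{2\gamma}\sim |t-s|^{2\gamma}2^{-k(2\gamma-1)}$, which is summable in $k$. In $\cB_{\delta-\gamma+\theta}$ the summands carry the weights $(t-u_n)^{-\theta}$. If $\theta<1$, I compare the resulting sum with $h^{2\gamma-1}\int_s^t(t-r)^{-\theta}dr$, which gives $|t-s|^{2\gamma-\theta}2^{-k(2\gamma-1)}$. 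If $1\le\theta<2\gamma$, the sum is dominated by the term closest to $t$, which gives $h^{2\gamma-\theta}$ up to a harmless constant (or a logarithm when $\theta=1$); this is still geometric in $k$ because $\theta<2\gamma$.

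Summing over $k$ and using $\mathcal{I}^0_{s,t}=S_{t-s}\xi_{s,t}$ yields \eqref{est:young} for the limit. A standard comparison of two arbitrary partitions through their common refinement, again via Step 1, shows that the limit does not depend on the partition, which gives the Riemann-sum characterisation of $\mathcal{I}_t(\xi)$. Since $\xi$ is linear in the noise increment, the constant is $[W^H]_\gamma\|\xi\|_{\mathcal{Y}}$.

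\textbf{Step 3: regularity of $\mathcal{I}$.} Additivity of the Riemann sums over $[0,s]\cup[s,t]$ gives $(\delta^S\mathcal{I})_{s,t}=\lim_{|\pi|\to0}\sum_{[u,v]\in\pi([s,t])}S_{t-u}\xi_{u,v}$. I then write
\[
\mathcal{I}_t-\mathcal{I}_s=(\delta^S\mathcal{I})_{s,t}+(S_{t-s}-I)\mathcal{I}_s .
\]
\begin{itemize}
\item Taking $\theta=\gamma$ in \eqref{est:young} and $\|S_{t-s}\xi_{s,t}\|_\delta\lesssim|t-s|^\gamma$ gives $\|(\delta^S\mathcal{I})_{s,t}\|_\delta\lesssim|t-s|^\gamma$. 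Hence $\mathcal{I}$ is bounded in $\cB_\delta$. Continuity in $\cB_\delta$ follows from strong continuity of $S$ applied to the second term.
\item By \eqref{hg:1} with $\sigma=\gamma$, $\|(S_{t-s}-I)\mathcal{I}_s\|_{\delta-\gamma}\lesssim|t-s|^\gamma\|\mathcal{I}_s\|_\delta$. Combined with the $\theta=0$ case of \eqref{est:young}, this gives $\mathcal{I}\in C^\gamma([0,T];\cB_{\delta-\gamma})$.
\end{itemize}

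\textbf{Main obstacle.} I expect the hardest step to be the $\theta$-weighted summation in Step 2 near the endpoint $t$. There the singular factor $(t-u)^{-\theta}$ must be balanced against $h^{2\gamma}$ uniformly in the position of the intervals. I would first try the split into $\theta<1$ (integral comparison) and $\theta\ge1$ (domination by the last interval). If that loses too much, the fallback is to interpolate via \eqref{interpolation:ineq} between the $\theta=0$ estimate and an estimate with $\theta$ slightly below $2\gamma$.
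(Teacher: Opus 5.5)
Your proposal is essentially the paper's proof. It uses the same dyadic refinement of $[s,t]$, the same defect $S_{t-u}(\delta\xi)_{u,m,v}+S_{t-m}(S_{m-u}-I)\xi_{m,v}$ controlled by \eqref{hg:2} and \eqref{hg:1}, and the same geometric summation in $k$; the only difference is cosmetic: instead of splitting into $\theta<1$ and $1\le\theta<2\gamma$, the paper takes one auxiliary exponent $\theta'$ (implicitly $0\le\theta'$, $\theta-1<\theta'<2\gamma-1$), uses $|v-m|^{\theta'}\le|t-m|^{\theta'}$, and so reduces all cases to the single integral $\int_s^t|t-r|^{\theta'-\theta}\,dr$, while your partition-independence argument and Step~3 regularity go beyond what the paper writes out here, since it only treats dyadic partitions.
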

\begin{proof}
We prove the statement for dyadic partitions of the interval $[s,t]$. We denote by $\pi_k$ the $k$-th dyadic partition of $[s,t]$, i.e.~$\pi_k=\{ s\leq t_0 <t_1,\ldots <t_{2^k}=t  \}$, so $t_i = s+\frac{i(t-s)}{2^k}$ for $i={0,\ldots ,2^{k}-1}$.
We define for $\pi\in\cP([0,t])$ the integral
\[I^{\pi}_t :=\sum\limits_{[u,v]\in \pi}  S_{t-u}Y_u W^H_{v,u}= \sum\limits_{[u,v]\in \pi}  S_{t-u}\xi_{u,v}\] 
Then we get for $m=(u+v)/2$ that
\begin{align*} I^{\pi_k}_{s,t}- I^{\pi_{k+1}}_{s,t}
&=\sum\limits_{[u,v]\in \pi_k }S_{t-u}\xi_{u,v} -\sum\limits_{[u,v]\in \pi_k} S_{t-u} \xi_{u,m} -S_{t-m}\xi_{m,v}\\
&=\sum\limits_{[u,v]\in\pi_k} S_{t-u} \delta\xi_{u,m,v} +S_{t-m} (S_{m-u}-I)\xi_{m,v}. 
\end{align*}
  We show that $(I^{\pi_k}_{s,t})$ is a Cauchy sequence in $\cB_{\delta-\gamma}$. \\
  Using regularizing properties of analytic semigroups we get that 
\begin{align*}
    \Big\| I^{\pi_k}_{s,t}-I^{\pi_{k+1}}_{s,t}\Big\|_{\delta-\gamma+\theta}& \lesssim \sum\limits_{[u,v]\in\pi_k} \| S_{t-u} \|_{\cL(\cB_{\delta-\gamma},\cB_{\delta-\gamma+\theta})} \|(\delta \xi)_{u,m,v}\|_{\cB_{\delta-\gamma}} \\& \hspace*{15 mm}+ \|S_{t-m}\|_{\cL(\cB_{\delta-\gamma},\cB_{\delta-\gamma+\theta})}  \| S_{m-u}-I\|_{\cL(\cB_\delta,\cB_{\delta-\gamma}) }\|\xi_{m,v}\|_{\cB_\delta}  \\
    & \lesssim \|\xi\|_{\mathcal{V}} \sum\limits_{[u,v]\in\pi_k} (t-m)^{-\theta} (v-m)^\gamma(m-u)^\gamma\\
    & \lesssim \|\xi\|_{\mathcal{V}} \sum\limits_{[u,v]\in\pi_k} |t-m|^{-\theta} |v-m|^{2\gamma-1}|m-u|\\
    & \lesssim \|\xi\|_{\mathcal{V}}  \sum\limits_{[u,v]\in\pi_k} |t-m|^{\theta'-\theta} |v-m|^{2\gamma-1-\theta'}|m-u|\\
    & \lesssim \|\xi\|_{\mathcal{V}} 2^{-k(2\gamma-1-\theta')} |t-s|^{2\gamma-1-\theta'} \sum\limits_{[u,v]\in\pi_k}|t-m|^{\theta'-\theta}|m-u|\\
    & \lesssim  \|\xi\|_{\mathcal{V}} 2^{-k(2\gamma-1-\theta')} |t-s|^{2\gamma-1-\theta'} \int_s^t |t-r|^{\theta'-\theta}~\txtd r\\
    & \lesssim  \|\xi\|_{\mathcal{V}} 2^{-k(2\gamma-1-\theta')}|t-s|^{2\gamma-\theta}.
\end{align*}
Choosing $\theta'$ such that $2\gamma-1-\theta'>0$ and summing over $k\in\mathbb{N}$ proves~\eqref{remainder:integral2}.
\end{proof}

\begin{corollary}{\em(Young integral for transport-type noise)}\label{young:t}
        Let $Y\in C([0,T];\cB_{\alpha-1/2})\cap C^\gamma([0,T];\cB_{\alpha-\gamma-1/2})$.\\Then 
     there exists a map $\mathcal{I}: C([0,T];\cB_{\alpha-1/2})\cap C^\gamma([0,T];\cB_{\alpha-\gamma-1/2})  \to C([0,T];\cB_\alpha)\cap C^\gamma([0,T];\cB_{\alpha-\gamma}) $ such that $\mathcal{I}_0=0$, 
   \begin{align*}
\mathcal{I}_t =\int_0^t S_{t -s}Y_s dW^H_s       =\lim\limits_{\pi\in\cP([0,t]),|\pi|\to 0} \sum\limits_{[u,v]\in\pi} S_{t-u} Y_u W^H_{u,v}, 
   \end{align*}
which satisfies the following estimates for all $0\leq s \leq t \leq T$:
\begin{align}\label{remainder:integral2}
    \Big\| \int_s^t S_{t-r}Y_r~d W^H_r -S_{t-s}Y_s W^H_{s,t} \Big\|_{\alpha-\gamma}\lesssim [W^H]_\gamma \max\{ \|Y\|_{0,\alpha-1/2}, \|Y\|_{\gamma,\alpha-1/2-\gamma}\} (t-s)^{2\gamma-1/2}
    \end{align}
    and
    \begin{align}\label{remainder:integral}
    \Big\| \int_s^t S_{t-r}Y_r~d W^H_r -S_{t-s}Y_s W^H_{s,t} \Big\|_{\alpha}\lesssim [W^H]_\gamma \max\{ \|Y\|_{0,\alpha-1/2}, \|Y\|_{\gamma,\alpha-1/2-\gamma}\} (t-s)^{\gamma-1/2}.
\end{align}

\end{corollary}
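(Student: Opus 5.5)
The plan is to obtain Corollary~\ref{young:t} from Theorem~\ref{young:a} with $\delta=\alpha-\tfrac12$ and the two-index element $\xi_{u,v}:=Y_u W^H_{u,v}$. The proof has three steps: check that this $\xi$ lies in $\mathcal{Y}$, apply the sewing estimate \eqref{est:young} with two choices of $\theta$, and then pass from the germ $S_{t-s}\xi_{s,t}$ to the statements of the corollary.

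\textbf{Step 1: $\xi\in\mathcal{Y}$.} We have
\[
\|\xi_{u,v}\|_{\alpha-1/2}\le \|Y\|_{0,\alpha-1/2}[W^H]_\gamma |v-u|^\gamma .
\]
By bilinearity of the germ,
\[
(\delta\xi)_{u,m,v} = Y_uW^H_{u,v}-Y_uW^H_{u,m}-Y_mW^H_{m,v} = -(Y_m-Y_u)W^H_{m,v}.
\]
Hence
\[
\|(\delta\xi)_{u,m,v}\|_{\alpha-1/2-\gamma}\le \|Y\|_{\gamma,\alpha-1/2-\gamma}[W^H]_\gamma |m-u|^\gamma|v-m|^\gamma .
\]
So $\xi\in\mathcal{Y}$ with $\|\xi\|_{\mathcal{Y}}\lesssim [W^H]_\gamma\max\{\|Y\|_{0,\alpha-1/2},\|Y\|_{\gamma,\alpha-1/2-\gamma}\}$. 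Theorem~\ref{young:a} then gives the limit of Riemann sums. This is exactly $\mathcal{I}_t$, and it equals the limit $I_t$ of Theorem~\ref{s}, because the dyadic sums coincide.

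\textbf{Step 2: the remainder bounds.} Write the localized remainder as
\[
\int_s^t S_{t-r}Y_r\,dW^H_r - S_{t-s}Y_sW^H_{s,t}.
\]
This is the limit of the dyadic telescoping sums in the proof of Theorem~\ref{young:a}, now taken on $[s,t]$ with target time $t$. Summing over $k$ the level-$k$ bound there gives $|t-s|^{2\gamma-\theta}$ in the norm $\delta-\gamma+\theta$.
\begin{itemize}
\item Taking $\theta=0$ gives norm $\alpha-\tfrac12-\gamma$, which is too weak for \eqref{remainder:integral2}.
\item Taking $\theta=\tfrac12$ gives norm $\alpha-\gamma$ and exponent $2\gamma-\tfrac12$, which is \eqref{remainder:integral2}.
\item Taking $\theta=\gamma+\tfrac12$, which is admissible since $\gamma+\tfrac12<2\gamma$ because $\gamma>\tfrac12$, gives norm $\alpha$ and exponent $\gamma-\tfrac12$, which is \eqref{remainder:integral}.
\end{itemize}
The \textbf{main obstacle} is checking that the level-$k$ sum converges for $\theta$ close to $2\gamma$. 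In the sum $\sum|t-m|^{\theta'-\theta}|m-u|$ one needs $\theta-\theta'<1$ for integrability, together with $\theta'<2\gamma-1$. With $\theta=\gamma+\tfrac12$ I would choose $\theta'\in(\gamma-\tfrac12,\,2\gamma-1)$, which is nonempty since $\gamma>\tfrac12$. The last interval, adjacent to $t$, where $|t-m|^{-\theta}$ is largest, is bounded by hand using $t-m\ge(v-u)/2$.

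\textbf{Step 3: regularity of $\mathcal{I}$.} Split
\[
\mathcal{I}_t-\mathcal{I}_s=(S_{t-s}-I)\mathcal{I}_s+\int_s^tS_{t-r}Y_r\,dW^H_r .
\]
\begin{itemize}
\item For the $\cB_{\alpha-\gamma}$ Hölder bound, control $(S_{t-s}-I)\mathcal{I}_s$ by \eqref{hg:1}, which gives $(t-s)^\gamma\|\mathcal{I}_s\|_\alpha$.
\item For the integral over $[s,t]$, add the germ $S_{t-s}Y_sW^H_{s,t}$, whose $\cB_{\alpha-\gamma}$ norm is $\lesssim (t-s)^\gamma\|Y_s\|_{\alpha-1/2}$ since $\gamma\ge\tfrac12$. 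Combined with \eqref{remainder:integral2} this gives $\gamma$-Hölder continuity in $\cB_{\alpha-\gamma}$.
\item For boundedness in $\cB_\alpha$, use \eqref{remainder:integral} with $s=0$ plus $\|S_tY_0W^H_{0,t}\|_\alpha\lesssim t^{\gamma-1/2}$.
\item Continuity in $\cB_\alpha$ follows by interpolation \eqref{interpolation:ineq} between this bound and a slightly stronger-norm bound, or exactly as in Step~2 of Theorem~\ref{s}.
\end{itemize}
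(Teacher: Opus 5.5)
Your proposal is correct and follows the paper's own argument: apply Theorem~\ref{young:a} with $\delta=\alpha-\tfrac12$ and germ $\xi_{u,v}=Y_uW^H_{u,v}$, then take $\theta=\tfrac12$ and $\theta=\gamma+\tfrac12$ in \eqref{est:young}. Your check that $\xi\in\mathcal{Y}$, the admissibility check $\gamma+\tfrac12<2\gamma$ with $\theta'\in(\gamma-\tfrac12,2\gamma-1)$, and your Step~3 all supply details the paper leaves implicit; Step~3 is genuinely needed, since Theorem~\ref{young:a} as stated only places $\mathcal{I}$ in $C([0,T];\cB_{\alpha-1/2})\cap C^\gamma([0,T];\cB_{\alpha-1/2-\gamma})$. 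The only slip is your side remark that $\mathcal{I}_t$ equals $I_t$ of Theorem~\ref{s} because \emph{the dyadic sums coincide}: Theorem~\ref{s} uses the fixed grid $\{n2^{-k}\}$ truncated to $[0,t]$, while Theorem~\ref{young:a} subdivides $[0,t]$ uniformly, so the sums differ in general, though this identification is not needed for the corollary.
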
 
\begin{proof}
    The statement follows from Theorem \ref{young:a} setting $\delta=\alpha-1/2$ and using the approximation term $\xi_{s,t}=Y_s W^{H}_{s,t}$.~In order to obtain~\eqref{remainder:integral} we set  $\theta=1/2$ in~\eqref{est:young},  respectively $\theta=\gamma+1/2$ for~\eqref{remainder:integral}.
    \end{proof}

For the sake of completeness, we show that the convolution improves the spatial regularity by a parameter $\sigma<\gamma$. This justifies the choice of Young's integral in the context of transport type noise.

\begin{corollary}\label{i:reg}
    Let $Y\in C([0,T];\cB_\delta)\cap C^\gamma([0,T];\cB_{\delta-\gamma})$ and $0\leq \sigma<\gamma$. Then the integral map constructed in Theorem~\ref{young:a} is continuous from $C([0,T];\cB_\delta)\cap C^\gamma([0,T];\cB_{\delta-\gamma})$ to $C([0,T];\cB_{\delta+\sigma})\cap C^\gamma([0,T];\cB_{\delta-\gamma+\sigma})$.
\end{corollary}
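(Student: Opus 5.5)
We derive Corollary~\ref{i:reg} from the estimate \eqref{est:young} of Theorem~\ref{young:a}, applied to the germ $\xi_{s,t}=Y_sW^H_{s,t}$. First we check that this germ lies in $\mathcal{Y}$. One has $\|\xi_{s,t}\|_\delta\le\|Y\|_{0,\delta}[W^H]_\gamma|t-s|^\gamma$. Moreover $(\delta\xi)_{s,u,t}=-(Y_u-Y_s)W^H_{u,t}$, so
\[
\|(\delta\xi)_{s,u,t}\|_{\delta-\gamma}\le\|Y\|_{\gamma,\delta-\gamma}[W^H]_\gamma|u-s|^\gamma|t-u|^\gamma .
\]
Together these give $\|\xi\|_{\mathcal{Y}}\lesssim[W^H]_\gamma\|Y\|_{V}$, where $\|Y\|_V:=\max\{\|Y\|_{0,\delta},\|Y\|_{\gamma,\delta-\gamma}\}$. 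Since the map $Y\mapsto\mathcal{I}(Y\,W^H)$ is linear in $Y$, continuity reduces to two bounds: on $\sup_t\|\mathcal{I}_t\|_{\delta+\sigma}$ and on the $\gamma$-H\"older seminorm in $\cB_{\delta-\gamma+\sigma}$, each by a constant times $\|Y\|_V$.

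\textbf{Supremum bound.} Take $s=0$ in \eqref{est:young} with $\theta=\gamma+\sigma$. This is admissible because $\sigma<\gamma$ gives $\theta<2\gamma$. It yields
\[
\|\mathcal{I}_t-S_tY_0W^H_{0,t}\|_{\delta+\sigma}\lesssim[W^H]_\gamma\|\xi\|_{\mathcal{Y}}\,t^{\gamma-\sigma}.
\]
For the leading term, smoothing \eqref{hg:2} gives $\|S_tY_0\|_{\delta+\sigma}\lesssim t^{-\sigma}\|Y_0\|_\delta$. Hence $\|S_tY_0W^H_{0,t}\|_{\delta+\sigma}\lesssim t^{\gamma-\sigma}[W^H]_\gamma\|Y\|_{0,\delta}$, and the exponent is nonnegative exactly because $\sigma<\gamma$.

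\textbf{H\"older bound.} For $s<t$ write
\[
\mathcal{I}_t-\mathcal{I}_s=\bigl((\delta^S\mathcal{I})_{s,t}-S_{t-s}\xi_{s,t}\bigr)+S_{t-s}\xi_{s,t}+(S_{t-s}-I)\mathcal{I}_s .
\]
\begin{itemize}
\item The remainder is handled by \eqref{est:young} with $\theta=\sigma$, which gives a bound of order $|t-s|^{2\gamma-\sigma}\le T^{\gamma-\sigma}|t-s|^\gamma$ in $\cB_{\delta-\gamma+\sigma}$.
\item For the germ term, \eqref{semigroup} gives $\|S_{t-s}Y_sW^H_{s,t}\|_{\delta-\gamma+\sigma}\lesssim\|Y_s\|_\delta|t-s|^\gamma$, using the embedding $\cB_\delta\hookrightarrow\cB_{\delta-\gamma+\sigma}$.
\item For the last term, \eqref{hg:1} with exponent $\gamma$ gives $\|(S_{t-s}-I)\mathcal{I}_s\|_{\delta-\gamma+\sigma}\lesssim|t-s|^\gamma\|\mathcal{I}_s\|_{\delta+\sigma}$, and this is controlled by the supremum bound.
\end{itemize}

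\textbf{Main obstacle.} The delicate point is justifying the identity $\mathcal{I}_t-\mathcal{I}_s=(\delta^S\mathcal{I})_{s,t}+(S_{t-s}-I)\mathcal{I}_s$ with $(\delta^S\mathcal{I})_{s,t}=\int_s^tS_{t-r}Y_r\,dW^H_r$. This additivity must be proved at the level of Riemann sums: take dyadic partitions containing the point $s$, and use the semigroup property $S_{t-u}=S_{t-s}S_{s-u}$ on the subintervals of $[0,s]$. This is the analogue of \eqref{addit2}. One then passes to the limit using the convergence in Theorem~\ref{young:a}, which holds in the stronger norm $\cB_{\delta-\gamma+\theta}$ thanks to the uniform-in-$k$ estimates. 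Continuity of $t\mapsto\mathcal{I}_t$ in $\cB_{\delta+\sigma}$ then follows by repeating the H\"older splitting in that norm with a small exponent $\epsilon<\gamma-\sigma$ in place of $\gamma$ in \eqref{hg:1}, exactly as in Step 2 of Theorem~\ref{s}.
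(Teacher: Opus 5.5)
Your proof is correct and follows essentially the same route as the paper. You use the same splitting $\mathcal{I}_t-\mathcal{I}_s=\int_s^t S_{t-r}Y_r\,\txtd W^H_r+(S_{t-s}-I)\mathcal{I}_s$, apply \eqref{est:young} at $\theta=\sigma$ for the H\"older part and at $\theta=\gamma+\sigma$ (together with \eqref{hg:2} on the germ) for the supremum bound, and bound the last term by \eqref{hg:1} with exponent $\gamma$. The differences are minor: the paper also uses the $(\gamma+\varepsilon)$-H\"older regularity of $W^H$ on the germ term to gain a small power of $T$, which matters for the later contraction argument but not for continuity; you, in turn, make explicit the check $\xi\in\mathcal{Y}$ and the time-continuity in $\cB_{\delta+\sigma}$, which the paper leaves implicit.
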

\begin{proof}    
We first show the H\"older continuity. To this aim, we compute for $0\leq s<t\leq T$
   \begin{align*}
       &\int_0^t S_{t-r}Y_r~d W^H_r -\int_0^s S_{s-r}Y_r~d W^H_r\\
       & = \int_s^t S_{t-r} Y_r~\txtd W^H_r +(S_{t-s}-I)\int_0^s S_{s-r}Y_r~d W^H_r.
   \end{align*} 
We set $A:= \max\{ \|Y\|_{0,\delta} ,\| Y\|_{\gamma,\delta-\gamma}\} $. The first term gives due to~\eqref{remainder:integral}  
   \begin{align*}
       \Big\| \int_s^t S_{t-r} Y_r~d W^H_r\Big\|_{\delta-\gamma+\sigma} &\lesssim A (t-s)^{2\gamma-\sigma} + \|S_{t-s}Y_s W^H_{s,t}\|_{\delta-\gamma+\sigma}\\
       & \lesssim A (t-s)^{2\gamma-\sigma} +(t-s)^{\gamma+\varepsilon} [W^H]_{\gamma+\varepsilon} ,
   \end{align*}
   where we used that $W^H$ is $(\gamma+\varepsilon)$-H\"older continuous for $\varepsilon<H-\gamma$. 
Furthermore
\begin{align*}
\Big\| (S_{t-s}-I) \int_0^s S_{s-r} Y_r \txtd W^H_r  \Big\|_{\delta-\gamma+\sigma}& \leq \|S_{t-s}-I\|_{\cL(\cB_{\delta+\sigma},\cB_{\delta-\gamma+\sigma})} \Big\| \int_0^s S_{s-r} Y_r~\txtd W^H_r \Big\|_{\delta+\sigma}\\
& \lesssim (t-s)^\gamma [ A s^{\gamma-\sigma} +\| S_s y_0 W^H_{0,s} \|_{\delta+\sigma} ] \\
& \lesssim (t-s)^\gamma [A s^{\gamma-\sigma} +\|S_s\|_{\cL(\cB_{\delta},\cB_{\delta+\sigma})} \|Y\|_{\delta} s^\gamma [W^H]_\gamma ]\\
&\lesssim A (t-s)^\gamma  s^{\gamma-\sigma} .
\end{align*}
   Putting these estimates together, we get 
   \begin{align}\label{hoelder:integral}
       \Big\| \int_0^\cdot S_{\cdot-r}Y_r~\txtd W^H_r \Big\|_{\gamma,\alpha-\gamma} \lesssim A T^{(\gamma-\sigma) \wedge\varepsilon }. 
   \end{align}
Based on~\eqref{remainder:integral2} we get the following estimate for the stochastic convolution in $C([0,T],\cB_\alpha)$. We get 
\begin{align*}
    \Big\| \int_s^t S_{t-r}y_r~\txtd W^H_r \Big\|_{\delta+\sigma} &\lesssim [W^H]_\gamma A (t-s)^{\gamma-\sigma}\\
    &+ \|S_{t-s}\|_{L(\cB_\delta,\cB_{\delta+\sigma})} \|Y\|_{0,\delta} [W^H]_\gamma (t-s)^\gamma\\
    & \lesssim A [W^H]_\gamma (t-s)^{\gamma-\sigma}.
\end{align*}
Therefore 
\[ \Big\| \int_0^\cdot S_{\cdot-r} Y_r~\txtd W^H_r \Big\|_{0,\delta+\sigma} \lesssim A [W^H]_\gamma   T^{\gamma-\sigma}. \]
This proves the statement.
\end{proof}

\begin{remark}
\begin{itemize}
\item Setting $\delta=\alpha-1/2$, $\sigma=0$ and $\theta$ as in the proof of Corollary~\ref{young:t}, we get the regularity of the convolution in $C([0,T];\cB_{\alpha-1/2})\cap C^\gamma([0,T];\cB_{\alpha-1/2-\gamma})$, as justified in Theorem \ref{s}.
\item Choosing $\delta=\alpha$ we are in the setting of section \ref{sect:genproofs}.
\end{itemize}
\end{remark}

\end{document}